\numberwithin{equation}{section}
\newtheorem{theorem}{Theorem}[section]
\newtheorem{lemma}[theorem]{Lemma}
\newtheorem{proposition}[theorem]{Proposition}
\newtheorem{corollary}[theorem]{Corollary}
\newtheorem{conjecture}[theorem]{Conjecture}
\theoremstyle{definition}
\newtheorem{definition}[theorem]{Definition} 
\newtheorem{remark}[theorem]{Remark}
\newtheorem{example}[theorem]{Example}
\begin{document}


\title[]{The degree and regularity of vanishing ideals of algebraic
toric sets over finite fields} 

\thanks{The first author is a member of the Center for Mathematical Analysis,
Geometry and Dynamical Systems. The second author was partially supported by SNI}

\author{Maria Vaz Pinto}
\address{
Departamento de Matem\'atica\\
Instituto Superior Tecnico\\
Universidade T\'ecnica de Lisboa\\ 
Avenida Rovisco Pais, 1\\ 
1049-001 Lisboa, Portugal 
}\email{vazpinto@math.ist.utl.pt}

\author{Rafael H. Villarreal}
\address{
Departamento de
Matem\'aticas\\
Centro de Investigaci\'on y de Estudios
Avanzados del
IPN\\
Apartado Postal
14--740 \\
07000 Mexico City, D.F.
}
\email{vila@math.cinvestav.mx}

\keywords{Vanishing ideal, complete intersection, evaluation code,
minimum distance, 
degree, regularity, bipartite graph, clutter.}
\subjclass[2010]{Primary 13F20; Secondary 13P25, 11T71, 94B25.} 

\begin{abstract} 
Let $X^*$ be a subset of an affine space
$\mathbb{A}^s$, over a
finite field $K$, which is parameterized by the edges of a clutter.
Let $X$ and $Y$ be the images of $X^*$ under the maps $x\mapsto [x]$
and $x\mapsto[(x,1)]$ respectively, where $[x]$ and $[(x,1)]$ are
points in the projective 
spaces $\mathbb{P}^{s-1}$ and $\mathbb{P}^s$ respectively. 
For certain clutters and for connected
graphs, we were able to relate the algebraic invariants and
properties of the vanishing 
ideals $I(X)$ and $I(Y)$. In a number of interesting cases, we
compute its degree and 
regularity. For Hamiltonian bipartite graphs, we show 
the Eisenbud-Goto regularity conjecture. We give optimal bounds for
the regularity when the graph 
is bipartite. 
It is shown that $X^*$ is an affine torus if
and only if $I(Y)$ is a complete intersection. We present some
applications to coding theory and show some bounds for the minimum distance 
of parameterized linear codes for connected bipartite graphs. 
\end{abstract}

\maketitle 

\section{Introduction}\label{intro-rs-codes}

Let $K=\mathbb{F}_q$  be a finite field with $q\neq 2$ elements and 
let $y^{v_1},\ldots,y^{v_s}$ be a finite set of monomials.  
As usual if $v_i=(v_{i1},\ldots,v_{in})\in\mathbb{N}^n$, 
then we set 
$$
y^{v_i}=y_1^{v_{i1}}\cdots y_n^{v_{in}},\ \ \ \ i=1,\ldots,s,
$$
where $y_1,\ldots,y_n$ are the indeterminates of a ring of 
polynomials with coefficients in $K$. Consider the following sets 
parameterized  by these monomials: (a) the  {\it affine algebraic
toric set\/} 
$$
X^*:=\{(x_1^{v_{11}}\cdots x_n^{v_{1n}},\ldots,x_1^{v_{s1}}\cdots
x_n^{v_{sn}})\in\mathbb{A}^{s}	\vert\, x_i\in K^*\mbox{ for all }i\},
$$
where $K^*=\mathbb{F}_q^*=\mathbb{F}_q\setminus\{0\}$ and
$\mathbb{A}^{s}=K^s$ is an affine 
space over the field $K$, (b) the {\it projective algebraic toric set} 
$$
X:=\{[(x_1^{v_{11}}\cdots x_n^{v_{1n}},\ldots,x_1^{v_{s1}}\cdots
x_n^{v_{sn}})]\, \vert\, x_i\in K^*\mbox{ for all
}i\}\subset\mathbb{P}^{s-1},
$$
where $\mathbb{P}^{s-1}$ is a projective space over the field $K$,
and (c) 
the {\it projective closure\/} of $X^*$
$$
Y:=\{[(x_1^{v_{11}}\cdots x_n^{v_{1n}},\ldots,x_1^{v_{s1}}\cdots
x_n^{v_{sn}},1)]\, \vert\, x_i\in K^*\mbox{ for all
}i\}\subset\mathbb{P}^{s}.
$$

Notice that  $Y$ is parameterized  by
$y^{v_1},\ldots,y^{v_s},y^{v_{s+1}}$, where $v_{s+1}=0$. These three
sets are multiplicative groups under componentwise multiplication. We
are interested in the algebraic invariants (regularity, degree, Hilbert
series)---and in the complete
intersection property---of
the vanishing ideals of these sets. 

Let $S=K[t_1,\ldots,t_s]=\oplus_{d=0}^\infty S_d$ and
$S[u]=\oplus_{d=0}^\infty S[u]_d$ be polynomial rings over the field
$K$ with the standard grading, where $S[u]$ is obtained from $S$ by
adjoining a new variable $u=t_{s+1}$. 

Recall that the {\it vanishing ideal\/} 
of $X^*$, denoted by $I({X^*})$, is the
ideal of $S$ generated by all polynomials that vanish on
${X^*}$. The {\it vanishing ideal\/} of $X$ (resp. $Y$), denoted by
$I(X)$ (resp. $I(Y)$), is the ideal 
of $S$ (resp. $S[u]$) generated by the homogeneous polynomials
that vanish on $X$ (resp. $Y)$. 

In this paper we uncover some relationships between the 
algebraic invariants---and the complete intersection properties---of
$I(X)$ and $I(Y)$. We 
focus on vanishing ideals of algebraic toric sets that are
parameterized by monomials $y^{v_1},\ldots,y^{v_s}$ arising from the 
edges of a graph $G$ or a clutter $\mathcal{C}$ (a clutter is a sort
of hypergraph, see 
Definition~\ref{clutter-def}).

This paper is motivated by the study of parameterized linear codes
\cite{algcodes}, and specifically by the fact that the degree and the Hilbert
function of $S[u]/I(Y)$ are related to the basic parameters of 
parameterized affine linear codes \cite{affine-codes} (see
Theorem~\ref{bridge-affine-projective}). 

The contents of this paper are as follows. In
Section~\ref{degree-and-reg} we study the degree and regularity of
vanishing ideals. It is well known that $|X|$ and $|Y|$ are the
degrees of $S/I(X)$ and $S[u]/I(Y)$ respectively \cite{harris}. We show that
$|Y|\leq (q-1)|X|$ and 
give sufficient conditions for equality in terms of $q$ and the
combinatorics of $\mathcal{C}$ (see
Proposition \ref{maria-cafeteria-cinvestav-gen}). If $G$ is a graph, 
we express $|Y|$ as a function of $q$, $n$ and
$|X|$ (see Theorem~\ref{maria-cafeteria-cinvestav}). For connected
graphs, we express $|Y|$ as a function of $q$ and $n$ only 
(Corollary~\ref{maria-cafeteria-cinvestav-1}). In general the ideal
$I(X)+(t_1^{q-1}-u^{q-1})$ is contained in $I(Y)$. We give 
sufficient conditions for equality (see
Theorem~\ref{sunday-jun19-11}), for instance 
equality occurs 
if $G$ is a bipartite graph or if $G$ is any graph and $q$ is even (see
Corollary~\ref{sunday-jun19-11-night}). It turns out that the
invariants of $S/I(X)$ and $S[u]/I(Y)$ are closely related if equality
occurs (see Proposition~\ref{jul5-11}). For connected bipartite
graphs, 
we give optimal upper and lower bounds for the regularity of
$S/I(X)$ (see Theorem~\ref{upper-lower-bounds-reg-bip}). 
Then, we compute the regularity of any Hamiltonian bipartite
graph (see Corollary~\ref{jun11-11-3}). As a byproduct, we show 
the Eisenbud-Goto regularity conjecture when $G$ is a
Hamiltonian bipartite graph (see
Corollary~\ref{eisenbud-goto-hamiltonian}). Let $X'$ be the set
parameterized by $y^{v_1},\ldots,y^{v_{s-1}}$. If $y_n$ occurs 
only in the monomial $y^{v_s}$, we relate the degree and the
regularity of $I(X)$ and $I(X')$ (see
Theorem~\ref{june-2011--dictamina}). For connected bipartite graphs,
this leads to an improved upper bound 
for the regularity of $S/I(X)$, in terms of the length of a largest
cycle (see
Corollary~\ref{jul5-11-1}).

In Section~\ref{applications}, we give applications
to coding theory, and explain the well known connections between 
the {\it algebraic invariants\/} of vanishing ideals (Hilbert
function, degree, regularity) and the {\it parameters} of affine and
projective parameterized linear codes (dimension, length, minimum
distance). We present upper and lower bounds for
the minimum distance of parameterized codes arising from connected
bipartite graphs (see Theorem~\ref{jun25-11}). 
The bounds are in terms of the minimum distance of
parameterized codes over projective tori. These bounds can be computed using a
recent result of 
\cite{ci-codes} (see Theorem~\ref{maria-vila-hiram-eliseo}). Let
$\delta_Y(d)$ (resp. $\delta_X(d)$) be the minimum distance of the
parameterized projective code of degree $d$ on the set $Y$ (resp.
$X)$, see Definition~\ref{def-param-proj-code}. 
For certain clutters we show that $\delta_Y(d)\leq (q-1)\delta_X(d)$ for
$d\geq 1$ with equality if $d=1$ and $G$ is a connected bipartite graph
(see Proposition~\ref{jun12-11}). 

In Section~\ref{ci-clutters-affine}, we characterize when $I(Y)$ is
a complete intersection in algebraic and geometric terms (see
Theorem~\ref{ci-affine}).  
A result of  \cite{ci-codes} shows 
that $I(X)$ is a complete intersection if and only if $X$ is a
projective torus (see Definition~\ref{projectivetorus-def}). We
complement 
this result by showing that $I(Y)$ is
a complete intersection if and only if $X^*$ is an affine torus 
(see Theorem~\ref{ci-affine}). For connected graphs,  
the complete intersection property of $I(X)$ is
independent of $q$ (see Proposition~\ref{jul27-11}), while 
the complete intersection property of $I(Y)$ depends on $q$. 
We describe when $I(Y)$ is a complete
intersection in terms of $q$ and the combinatorics of the graph 
(see Theorem~\ref{main-iy-graphs}). 

For all unexplained
terminology and additional information  we refer to
\cite{Mats} (for the general theory of commutative rings),   
\cite{AL,Sta1} (for the theory of Gr\"obner bases and Hilbert
functions), \cite{gold-little-schenck,GRT,tsfasman} (for
the theory of Reed-Muller codes and evaluation codes), 
\cite{algcodes} (for the theory of parameterized codes), and
\cite{Boll,cornu-book} (for graph theory and clutter theory). 

\section{The degree and the regularity of vanishing
ideals}\label{degree-and-reg}

We continue to use the notation and definitions used in the
introduction. In this section we study the degree and the regularity
of $S/I(X)$ and $S[u]/I(Y)$. 

\begin{definition}\label{clutter-def}
A {\it clutter\/} $\mathcal{C}$ is a family $E$ of subsets of a
finite ground set $\{y_1,\ldots,y_n\}$ such that if $f_1, f_2 \in
E$, then $f_1\not\subset f_2$. The ground set is called the {\em
vertex set} 
of $\mathcal{C}$ and $E$ 
is called the {\em edge set} of $\mathcal{C}$, they are denoted by
$V_\mathcal{C}$ 
and $E_\mathcal{C}$  respectively. 
\end{definition}

Clutters are special hypergraphs. One 
important example of a clutter is a graph with the vertices and edges
defined in the usual way for graphs \cite{Boll}. 

\begin{definition}\label{charvec-def}
Let $\mathcal{C}$ be a clutter with vertex set
$V_\mathcal{C}=\{y_1,\ldots,y_n\}$ and let $f$ be an edge of
$\mathcal{C}$. The {\it characteristic vector\/} of $f$ is the vector
$v=\sum_{y_i\in f}e_i$, where
$e_i$ is the $i${\it th} unit vector in $\mathbb{R}^n$. 
\end{definition}

Throughout this paper $\mathcal{C}$ will denote a clutter with $n$
vertices and $s$ edges. We will always assume that
$\{v_1,\ldots,v_s\}$ is the 
set of all characteristic vectors of the edges of $\mathcal{C}$. We
also assume that $y_1,\ldots,y_n$ are the vertices 
of $\mathcal{C}$. When $\mathcal{C}$ is a graph, we denote
$\mathcal{C}$ 
by $G$. 

\begin{definition} Let $\mathcal{C}$ be a clutter. We call $X$ (resp.
$X^*$) the {\it projective algebraic toric set} (resp. {\it 
affine algebraic toric set}) {\it parameterized by the edges} of $\mathcal{C}$
\end{definition}

\begin{definition} The {\it Hilbert function\/} of
$S[u]/I(Y)$ is given by 
$$H_Y(d):=\dim_K\, 
(S[u]/I(Y))_d=\dim_K\, 
S[u]_d/I(Y)_d, 
$$
where $I(Y)_d=S[u]_d\cap I(Y)$ is the degree $d$ part of $I(Y)$. 
\end{definition}

The ideal $I(Y)$ is Cohen-Macaulay of height $s$
\cite{geramita-cayley-bacharach}. Thus, $\dim\, S[u]/I(Y)=1$. 
The unique polynomial $h_Y(t)\in \mathbb{Z}[t]$ such
that $h_Y(d)=H_Y(d)$ for 
$d\gg 0$ is called the {\it Hilbert polynomial\/} of  
$S[u]/I(Y)$. In our situation $h_Y(t)$ is a constant.
Furthermore $H_Y(d)=|Y|$ for $d\geq |Y|-1$, see \cite[Lecture
13]{harris}. This means that $|Y|$ is the {\it degree\/} 
of $S[u]/I(Y)$. Likewise, the integer $|X|$ is the degree 
of $S/I(X)$. The {\it index of regularity\/} of $S[u]/I(Y)$, denoted by 
${\rm reg}(S[u]/I(Y))$, is the least integer $p\geq 0$ such that
$h_Y(d)=H_Y(d)$ for $d\geq p$. Under our hypothesis, the index of
regularity of $S[u]/I(Y)$ is the 
Castelnuovo-Mumford regularity of $S[u]/I(Y)$
\cite{eisenbud-syzygies}. We will refer to ${\rm reg}(S[u]/I(Y))$
simply as the
{\it regularity\/} of $S[u]/I(Y)$. 

We shall be interested in computing the degree and the
regularity of $S[u]/I(Y)$ and $S/I(X)$ in terms of the invariants of
the clutter $\mathcal{C}$ and the number of elements of the field $K$.

Let $k\geq 2$ be an integer. A clutter is called $k$-{\it uniform\/} 
if all its edges have cardinality $k$.  

\begin{proposition}\label{maria-cafeteria-cinvestav-gen} 
Let $\mathcal{C}$ be a clutter. 
\begin{itemize}
\item[(i)] $|Y|\leq (q-1)|X|$.
\item[(ii)] If there is $A\subset V_\mathcal{C}$ so 
that $|A\cap e|=1$ for any $e\in E_\mathcal{C}$, then $|Y|=(q-1)|X|$.
\item[(iii)] If $\mathcal{C}$ is a $k$-uniform clutter and 
$\gcd(q-1,k)=1$, then $|Y|=(q-1)|X|$.
\end{itemize}
\end{proposition}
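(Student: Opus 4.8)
The plan is to relate the cardinalities of $X$ and $Y$ by understanding the fibers of the natural surjection from $X^*$ onto each projective set. Consider the parameterization map
$$
\varphi\colon (K^*)^n\longrightarrow X^*,\qquad x=(x_1,\ldots,x_n)\mapsto (x^{v_1},\ldots,x^{v_s}).
$$
Since $X$ and $Y$ are the images of $X^*$ under $\alpha\mapsto[\alpha]$ and $\alpha\mapsto[(\alpha,1)]$, the key observation is that $Y$ distinguishes points of $X^*$ that $X$ identifies: two points $\alpha,\beta\in X^*$ give the same point of $Y$ exactly when $(\alpha,1)$ and $(\beta,1)$ are projectively equal, forcing the last coordinate scalar to be $1$ and hence $\alpha=\beta$, so the map $X^*\to Y$ is a bijection, i.e. $|Y|=|X^*|$. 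By contrast, the map $X^*\to X$ identifies $\alpha$ with $\lambda\alpha$ for $\lambda\in K^*$, and the fiber over a point of $X$ is the set of scalar multiples of a representative that happen to lie in $X^*$. Thus the first task is to show $|X^*|\le (q-1)|X|$, which gives (i).

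**For part (i)**, I would show that each fiber of $X^*\to X$ has size at most $q-1$. Fix $[\alpha]\in X$ with $\alpha=\varphi(x)\in X^*$; the points of $X^*$ mapping to $[\alpha]$ are those $\lambda\alpha$ with $\lambda\in K^*$ that are realizable as $\varphi(x')$ for some $x'\in(K^*)^n$. Since $X^*$ is a group under componentwise multiplication (as noted in the introduction) and the all-ones vector $\mathbf{1}$ corresponds to scalar multiplication, the fiber is a coset of the subgroup $\{\lambda\mathbf{1}:\lambda\in K^*\}\cap X^*$; in any case there are at most $q-1$ scalars $\lambda\in K^*$, so each fiber has at most $q-1$ elements, giving $|Y|=|X^*|\le (q-1)|X|$.

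**For parts (ii) and (iii)**, equality $|Y|=(q-1)|X|$ holds precisely when every fiber has full size $q-1$, equivalently when $\lambda\mathbf{1}\in X^*$ for all $\lambda\in K^*$, i.e. when for each $\lambda\in K^*$ there is $x\in(K^*)^n$ with $x^{v_i}=\lambda$ for all $i=1,\ldots,s$ simultaneously. In case (ii), given a transversal $A$ meeting each edge in exactly one vertex, I would set $x_j=\lambda$ for $y_j\in A$ and $x_j=1$ otherwise; then $x^{v_i}=\prod_{y_j\in e_i}x_j=\lambda$ because each edge $e_i$ contains exactly one vertex of $A$, producing the required $x$ and forcing full fibers. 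In case (iii), where $\mathcal{C}$ is $k$-uniform, choosing $x=(t,\ldots,t)$ gives $x^{v_i}=t^k$ for every $i$; since $\gcd(q-1,k)=1$, the $k$-th power map is a bijection on the cyclic group $K^*$, so $t\mapsto t^k$ is onto, and hence every $\lambda\in K^*$ is attained as $x^{v_i}$ simultaneously, again giving full fibers.

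**The main obstacle** I anticipate is verifying rigorously that equality across fibers follows from the existence of a single $x$ realizing each scalar $\lambda$: one must check that multiplying a representative of a fiber by such an $x$ genuinely lands back in $X^*$ (this uses closure of $X^*$ under the group operation) and that distinct $\lambda$ yield distinct points, so that the fiber size is exactly $q-1$ rather than merely bounded by it. The group structure of $X^*$ is what makes this clean, so I would lean on it throughout.
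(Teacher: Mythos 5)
Your proof is correct and takes essentially the same approach as the paper: the paper works with the projection epimorphism $\theta\colon Y\to X$, whose kernel lies in the diagonal $\{[(u,\ldots,u,1)]\mid u\in K^*\}$, and under the bijection $X^*\simeq Y$ (which you make explicit) that kernel is exactly your subgroup $\{\lambda\mathbf{1}\mid\lambda\in K^*\}\cap X^*$, whose cosets are your fibers. The equality witnesses in (ii) and (iii) --- rescaling along the transversal $A$, and surjectivity of the $k$-th power map on $K^*$ --- are the same constructions the paper uses.
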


\begin{proof} (i) Let $\mathbb{T}'=\{[(z_1,\ldots,z_{s+1})]\vert\,
z_i\in K^*\}$ and 
$\mathbb{T}=\{[(z_1,\ldots,z_{s})]\vert\, z_i\in K^*\}$ be two
projective torus in $\mathbb{P}^{s}$ and 
$\mathbb{P}^{s-1}$ respectively. The projection map
$$
\mathbb{T}'\longrightarrow\mathbb{T},\ \ \ \ \ \ \
[(z_1,\ldots,z_{s+1})]\longmapsto [(z_1,\ldots,z_{s})],
$$
induces an epimorphism of multiplicative groups $\theta\colon
Y\rightarrow X$. By the fundamental homomorphism theorem for 
groups one has an isomorphism $Y/{\rm ker}(\theta)\simeq X$. Since we
have the inclusion 
$${\rm
ker}(\theta)\subset \{[(u,\ldots,u,1)]\vert\, u\in K^*\},
$$
we get $|Y|=|{\rm
ker}(\theta)||X|\leq (q-1)|X|$. This completes the proof of (i).

(ii) We may assume that $A=\{y_1,\ldots,y_\ell\}$. Let
$[(x^{v_1},\ldots,x^{v_s})]$ be 
a point in $X$ and let $\gamma$ be an arbitrary element of
$K^*$. From the equality 
\begin{eqnarray*}
&&[(x^{v_1},\ldots,x^{v_s},\gamma)]=\\ 
&&\ \ \
\left[\left(\left(\frac{x_1}{\gamma}\right)^{v_{11}}
\cdots\left(\frac{x_\ell}{\gamma}\right)^{v_{1\ell}}
x_{\ell+1}^{v_{1,\ell+1}}\cdots   
x_n^{v_{1n}},\ldots, \left(\frac{x_1}{\gamma}\right)^{v_{s1}} \cdots
\left(\frac{x_\ell}{\gamma}\right)^{v_{s\ell}}
x_{\ell+1}^{v_{s,\ell+1}}\cdots x_n^{v_{sn}},1 
\right)\right]
\end{eqnarray*}
we get that $[(x^{v_1},\ldots,x^{v_s},\gamma)]\in Y$. Let $\beta$ be
a generator of the 
cyclic group $(\mathbb{F}_q^*,\cdot)$. We can choose $P_1,\ldots,P_m$ in $X^*$
such that $X=\{[P_1],\ldots,[P_m]\}$. Hence, the set 
$$
\{[(P_1,\beta)],\ldots,[(P_1,\beta^{q-1})],\ldots,[(P_m,\beta)],\ldots,[(P_m,\beta^{q-1})]\}
$$
is contained in $Y$ and has exactly $(q-1)|X|$ elements. Therefore
$|Y|\geq (q-1)|X|$. The reverse 
inequality follows from (i).

(iii) The map $\mathbb{F}_q^*\rightarrow \mathbb{F}_q^*$, $a\mapsto
a^k$, is an isomorphism of multiplicative groups if and only if
$\gcd(q-1,k)=1$. For 
each $a\in \mathbb{F}_q^*$,
making $x_i=a$ for all $i$ in $[(x^{v_1},\ldots,x^{v_s},1)]$, we get
that the point $[(a^k,\ldots,a^k,1)]$ is in the kernel of $\theta$.
Thus $|Y|=|{\rm ker}(\theta)||X|\geq (q-1)|X|$. The reverse
inequality follows from (i). 
\end{proof} 

\begin{definition} A graph $G$ is called {\it bipartite\/} if its vertex 
set can be partitioned into two disjoint 
subsets ${V}_1$ and ${V}_2$ such 
that every edge of $G$ has one end in ${V}_1$ and one end in ${V}_2$. The pair 
$(V_1,V_2)$ is called a {\it bipartition\/} of $G$.  
\end{definition}

\begin{remark}
If $G$ is a connected bipartite graph, there is 
only one bipartition of $G$.
\end{remark}

We come to the first main result of this section.  

\begin{theorem}\label{maria-cafeteria-cinvestav} 
If $G$ is a graph, then  
$$
|Y|=\left\{
\begin{array}{ll}
{\rm(i)}\;\; (q-1)|X|,&\mbox{ if }G \mbox{ is bipartite}.\\
{\rm(ii)}\;(q-1)|X|,&\mbox{ if }\gcd(q-1,2)=1.\\
{\rm(iii)} \frac{(q-1)}{2}|X|,&\mbox{ if }G \mbox{ is not bipartite
and }\gcd(q-1,2)\neq 
1. 
\end{array}
 \right.
$$
\end{theorem}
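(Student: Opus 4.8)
The plan is to dispose of parts (i) and (ii) by direct appeal to Proposition~\ref{maria-cafeteria-cinvestav-gen}, and then to treat part (iii)---the genuinely new case---by a careful count of the kernel of the projection $\theta\colon Y\to X$ introduced in the proof of that proposition. Throughout I use that a graph is a clutter all of whose edges have cardinality $2$, so that each $v_i$ has coordinate sum $2$.

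For (i), if $(V_1,V_2)$ is a bipartition of $G$ then every edge $e$ meets $V_1$ in exactly one vertex, so $|V_1\cap e|=1$ for all $e\in E_G$. Taking $A=V_1$ in part~(ii) of Proposition~\ref{maria-cafeteria-cinvestav-gen} gives $|Y|=(q-1)|X|$. For (ii), the hypothesis $\gcd(q-1,2)=1$ together with the fact that $G$ is a $2$-uniform clutter lets me apply part~(iii) of the same proposition with $k=2$, again yielding $|Y|=(q-1)|X|$. Note that neither of these two cases requires $G$ to be connected.

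The main work is part (iii), where $G$ is non-bipartite and $q$ is odd. From the proof of Proposition~\ref{maria-cafeteria-cinvestav-gen} one has $|Y|=|{\rm ker}(\theta)|\,|X|$, and ${\rm ker}(\theta)$ consists precisely of the points $[(u,\ldots,u,1)]$ lying in $Y$; since the last coordinate is pinned to $1$, distinct admissible $u\in K^*$ give distinct kernel elements. Writing $[(u,\ldots,u,1)]\in Y$ out, it suffices to show that
$$
U:=\{u\in K^*\,\vert\, \exists\, x_1,\ldots,x_n\in K^*\ \mbox{with}\ x^{v_i}=u\ \mbox{for all}\ i\}
$$
has exactly $(q-1)/2$ elements. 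For the lower bound $|U|\ge (q-1)/2$ I would note that for any square $u=a^2$ with $a\in K^*$, setting $x_i=a$ for all $i$ gives $x^{v_i}=a^2=u$; since $q$ is odd there are exactly $(q-1)/2$ squares in $K^*$, so $U$ contains all of them. (This is just the reasoning behind part~(iii) of the proposition, now with $\gcd(q-1,2)=2$.)

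The crux is the reverse inclusion: every $u\in U$ must be a square, and I expect this necessity direction to be the only real obstacle. Here I would use that $G$, being non-bipartite, contains an odd cycle, say on vertices $y_{a_1},\ldots,y_{a_{2m+1}}$ with edges $\{y_{a_j},y_{a_{j+1}}\}$ and $\{y_{a_{2m+1}},y_{a_1}\}$. Multiplying the $2m+1$ equations $x_{a_j}x_{a_{j+1}}=u$ around the cycle, each vertex variable appears exactly twice, so the left-hand side equals the perfect square $(x_{a_1}\cdots x_{a_{2m+1}})^2$ while the right-hand side equals $u^{2m+1}$. Hence $u^{2m+1}$ is a square, and therefore $u=u^{2m+1}\cdot(u^{m})^{-2}$ is a square as well, giving $|U|\le (q-1)/2$. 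Since the odd cycle is contained in a single component, this argument is insensitive to whether $G$ is connected. Combining the two bounds yields $|U|=(q-1)/2$ and hence $|Y|=\tfrac{q-1}{2}|X|$, completing the plan.
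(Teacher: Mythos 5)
Your proposal is correct and follows essentially the same route as the paper: parts (i) and (ii) by direct appeal to Proposition~\ref{maria-cafeteria-cinvestav-gen}, and part (iii) by identifying $\ker(\theta)$ with the set of points $[(u,\ldots,u,1)]\in Y$ and showing, via an odd cycle, that such $u$ are exactly the squares in $K^*$, of which there are $(q-1)/2$. The only (harmless) difference is cosmetic: the paper chains the equalities $b=x_1x_2=x_2x_3=\cdots$ to force all vertex values on the cycle to coincide, whereas you multiply the edge equations around the cycle and use that $u^{2m+1}$ being a square forces $u$ to be a square.
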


\begin{proof} Let $(V_1,V_2)$ be a bipartition of $G$. Notice that
the set $V_1$ satisfies that $|V_1\cap e|=1$ for any $e\in E_G$.
Thus, (i) and (ii) follow from
Proposition~\ref{maria-cafeteria-cinvestav-gen}. 

(iii) We set $L=\{[(a^2,\ldots,a^2,1)]\vert\,
a\in \mathbb{F}_q^*\}\subset\mathbb{P}^s$. The projection map
$$
\{[(z_1,\ldots,z_{s+1})]\vert\, z_i\in K^*\}\rightarrow
\{[(z_1,\ldots,z_{s})]\vert\, z_i\in K^*\},\ \ \ \
[(z_1,\ldots,z_{s+1})]\mapsto [(z_1,\ldots,z_{s})], 
$$
induces an epimorphism of multiplicative groups $\theta\colon
Y\rightarrow X$. We claim that ${\rm ker}(\theta)=L$.  The
inclusion ``$\supset$'' clearly holds and is true for
any graph. 
To show the other inclusion we proceed by
contradiction. Pick $[P]\in {\rm
ker}(\theta)\setminus L$. This means that $[P]=[(b,\ldots,b,1)]$
for some $b\in \mathbb{F}_q^*$ and $b\neq a^2$ for any $a\in
\mathbb{F}_q^*$. Since $G$ is not a bipartite graph, $G$ contains and odd 
cycle $\mathcal{C}_k=\{y_1,\ldots,y_k\}$ of length $k$. We may assume
that $y^{v_1},\ldots,y^{v_k}$ are the monomials that correspond to the
edges of the cycle $\mathcal{C}_k$. Thus, any
element of $Y$ is of the form 
$$
[(x_1x_2,x_2x_3,\ldots,x_{k-1}x_k,x_1x_k,x^{v_{k+1}},\ldots,x^{v_s},1)]
$$
with $x_i\in \mathbb{F}_q^*$ for all $i$. Since the kernel of
$\theta$ is given by 
$$
{\rm ker}(\theta)=\{[(u,\ldots,u,1)]\vert\, 
u\in \mathbb{F}_q^*\}\cap
Y
$$ 
and since $[P]$ is in the kernel of $\theta$, we can write  
$$
b=x_1x_2=x_2x_3=\cdots=x_{k-1}x_k=x_1x_k
$$
for some $x_1,\ldots,x_k$ in $\mathbb{F}_q^*$. Hence 
$$
x_1=x_3=\cdots=x_k\ \mbox{ and }\ x_2=x_4=\cdots=x_{k-1}=x_1.
$$
Thus, $b=x_i^2$ for $i=1,\ldots,k$, a contradiction.  This completes
the proof of the claim. Next, we prove the equality $|L|=(q-1)/2$. 
Let $\beta$ be a generator of the
cyclic group $(\mathbb{F}_q^*,\cdot)$. In this case the image of the map 
$\mathbb{F}_q^*\rightarrow \mathbb{F}_q^*$, $a\mapsto a^2$, is a
subgroup of $\mathbb{F}_q^*$ of order $(q-1)/2$ because $\beta^2$ is a
generator of the image and this element has order $(q-1)/2$.
Therefore, $|L|=(q-1)/2$. Hence, from the isomorphism $Y/{\rm
ker}(\theta)\simeq X$ and using that $L={\rm ker}(\theta)$, 
we get $|Y|=\frac{(q-1)}{2}|X|$.
\end{proof} 

\begin{corollary}\label{maria-cafeteria-cinvestav-1} 
Let $G$ be a connected graph. Then 
$$
|Y|=\left\{
\begin{array}{ll}
{\rm(i)}\;\; (q-1)^{n-1},&\mbox{ if }G \mbox{ is bipartite}.\\
{\rm(ii)}\;(q-1)^n,&\mbox{ if }G\mbox{ is not bipartite and } q
\mbox{ is even}.\\
{\rm(iii)} \frac{(q-1)^n}{2},&\mbox{ if }G \mbox{ is not bipartite
and } q \mbox{ is odd}. 
\end{array}
 \right.
$$
\end{corollary}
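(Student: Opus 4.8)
The plan is to combine Theorem~\ref{maria-cafeteria-cinvestav} with an explicit computation of $|X|$ for a connected graph. The key observation is that the cases (i), (ii), (iii) of the corollary line up exactly with the cases of Theorem~\ref{maria-cafeteria-cinvestav}, so once we know $|X|$ in each case, the result follows by substitution. Thus the main work is to show that for a connected graph $G$ on $n$ vertices one has $|X|=(q-1)^{n-1}$ in the bipartite case and $|X|=(q-1)^{n-1}$ (equivalently, that $|Y|$ reduces to a power of $q-1$) in the non-bipartite cases, where the extra factor comes from the ratio $|Y|/|X|$ provided by the theorem.

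First I would compute $|X^*|$, the cardinality of the affine toric set, by studying the surjective group homomorphism
$$
\varphi\colon (\mathbb{F}_q^*)^n \longrightarrow X^*,\qquad (x_1,\ldots,x_n)\longmapsto (x^{v_1},\ldots,x^{v_s}),
$$
whose kernel consists of those $x$ with $x^{v_i}=1$ for every edge $v_i$. For a graph, $x^{v_i}=x_ax_b$ for the edge $\{y_a,y_b\}$, so the kernel is the set of $\pm 1$-type solutions: writing $x_j=\beta^{c_j}$ with $\beta$ a generator of $\mathbb{F}_q^*$ and $c_j\in\mathbb{Z}/(q-1)$, the conditions become $c_a+c_b\equiv 0$ for each edge. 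Since $G$ is connected, fixing $c_1$ determines every other $c_j$ up to the parity constraints around cycles; the connectedness forces $c_j=\pm c_1$ according to the distance from vertex $1$, and consistency around any odd cycle forces $2c_1\equiv 0$. I expect the kernel to have order $2$ when $G$ is non-bipartite and $q$ is odd (the two solutions $c_1\equiv 0$ and $c_1\equiv (q-1)/2$), and order $1$ when $G$ is bipartite or $q$ is even. This gives $|X^*|=(q-1)^n$ in the bipartite or $q$-even case and $|X^*|=(q-1)^n/2$ in the non-bipartite $q$-odd case.

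Next I would pass from $X^*$ to $X$. Since $X$ is the image of $X^*$ in $\mathbb{P}^{s-1}$, we have $|X|=|X^*|/|\{\lambda\in\mathbb{F}_q^*: \lambda\cdot P\in X^* \text{ is a scalar multiple}\}|$; more precisely, the projectivization identifies points of $X^*$ that differ by a common scalar $\lambda$ in $\mathbb{F}_q^*$, and for a connected graph every such scalar multiple $(\lambda x^{v_1},\ldots,\lambda x^{v_s})$ equals $((\lambda^{1/?})\cdots)$ — here one checks that scaling all $x_j$ by a common unit scales each $x^{v_i}$ by $\lambda^{|v_i|}=\lambda^2$, so the scalars realized are exactly the squares. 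Carrying this through, $|X|=|X^*|/(q-1)$ when the relevant scalar subgroup has full order $q-1$, and one reconciles the factor of $2$ discrepancies with the kernel computation above. The cleanest route is to deduce $|X|$ directly from $|Y|$ and the theorem, or vice versa: I would compute whichever of $|X|,|X^*|,|Y|$ is most transparent and then use Theorem~\ref{maria-cafeteria-cinvestav} to pin down the rest.

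The main obstacle is the careful bookkeeping of the two-fold ambiguities: the parity of cycles (bipartite versus non-bipartite) and the parity of $q-1$ (even versus odd $q$) each contribute a potential factor of $2$, and these interact. The delicate point is to verify that in the non-bipartite even-$q$ case these two factors do \emph{not} both appear (since $\mathbb{F}_q^*$ has odd order and hence no element of order $2$, the squaring map is bijective), yielding the clean exponent $(q-1)^n$ rather than $(q-1)^n/2$, while in the non-bipartite odd-$q$ case exactly one factor of $2$ survives to give $(q-1)^n/2$. Once the kernel orders are correctly tabulated against the three cases of Theorem~\ref{maria-cafeteria-cinvestav}, the stated formula for $|Y|$ follows immediately by substituting the connected-graph value of $|X|$.
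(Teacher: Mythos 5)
Your overall strategy---pin down $|X|$ (or $|X^*|$) for a connected graph and then substitute into Theorem~\ref{maria-cafeteria-cinvestav}---is exactly the paper's strategy; the paper simply quotes $|X|=(q-1)^{n-2}$ (bipartite) and $|X|=(q-1)^{n-1}$ (non-bipartite) from \cite[Corollary~3.8]{algcodes} and is done. Your attempt to re-derive these values from scratch is legitimate in spirit, but the kernel computation is wrong precisely in the bipartite case, and the error does not wash out. For the homomorphism $\varphi\colon(\mathbb{F}_q^*)^n\to X^*$, the constraint $2c_1\equiv 0$ comes \emph{only} from odd cycles; a bipartite graph has none, so $c_1$ remains a free parameter and the kernel is
$$
\{(x_1,\ldots,x_n)\ :\ x_j=\lambda \mbox{ for } y_j\in V_1,\ x_j=\lambda^{-1}\mbox{ for } y_j\in V_2,\ \lambda\in\mathbb{F}_q^*\},
$$
of order $q-1$, not order $1$ as you claim. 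Hence $|X^*|=(q-1)^{n-1}$ for a connected bipartite graph, not $(q-1)^n$. This matters immediately: the map $P\mapsto[(P,1)]$ is a bijection of $X^*$ onto $Y$, so $|Y|=|X^*|$, and your value would give $|Y|=(q-1)^n$ in the bipartite case---contradicting part (i) of the very statement you are proving.

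The same oversight infects your passage from $X^*$ to $X$: the scalars $\lambda$ with $(\lambda,\ldots,\lambda)\in X^*$ are \emph{not} just the squares when $G$ is bipartite. Scaling only the variables $x_j$ with $y_j\in V_1$ by $\lambda$ multiplies every edge monomial $x^{v_i}$ by $\lambda^1$ (each edge meets $V_1$ exactly once), so every $\lambda\in\mathbb{F}_q^*$ is realized and $|X|=|X^*|/(q-1)=(q-1)^{n-2}$. Both of your errors stem from forgetting that a bipartition permits these ``one-sided'' scalings, and they are exactly what distinguishes the bipartite answer $(q-1)^{n-1}$ from the non-bipartite ones. Your non-bipartite bookkeeping (kernel of order $2$ when $q$ is odd, order $1$ when $q$ is even, quotient by squares) is correct and yields (ii) and (iii); to repair the proof, redo case (i) with the kernel and scalar subgroup above, or else simply cite \cite[Corollary~3.8]{algcodes} for $|X|$ as the paper does and apply Theorem~\ref{maria-cafeteria-cinvestav} directly.
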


\begin{proof} From \cite{algcodes}, one has that $|X|=(q-1)^{n-2}$ if
$G$ is bipartite and $|X|=(q-1)^{n-1}$ otherwise. Hence, the result
follows from Theorem~\ref{maria-cafeteria-cinvestav}. 
\end{proof}

Let $I$ be an ideal of $S$ and let $S'=S[u]$.  
By abuse of notation, we will write $I$ in place of
  $I S'$ when it is clear from context that we are using
  the generators of $I$ but extending to an ideal of the larger
  ring $S'$. 

\begin{theorem}\label{sunday-jun19-11} Let $\mathcal{C}$ be a clutter. 
\begin{itemize}
\item[(a)] If there is $A\subset V_\mathcal{C}$ so 
that $|A\cap e|=1$ for any $e\in E_\mathcal{C}$, 
then $I(Y)=I(X)+(t_1^{q-1}-t_{s+1}^{q-1})$.
\item[(b)] If $\mathcal{C}$ is a $k$-uniform clutter and
$\gcd(q-1,k)=1$, 
then $I(Y)=I(X)+(t_1^{q-1}-t_{s+1}^{q-1})$.
\end{itemize}
\end{theorem}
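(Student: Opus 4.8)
The plan is to set $I:=I(X)+(t_1^{q-1}-t_{s+1}^{q-1})$ as an ideal of $S'=S[u]$ (recall $u=t_{s+1}$) and to prove the single equality $I=I(Y)$, since parts (a) and (b) have the same conclusion. I would carry out the argument uniformly and invoke Proposition~\ref{maria-cafeteria-cinvestav-gen} only at the very end to supply the numerical input $|Y|=(q-1)|X|$: part (ii) under hypothesis (a), part (iii) under hypothesis (b). Everything else is independent of which hypothesis is assumed.

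First I would record the containment $I\subseteq I(Y)$, which holds for every clutter. A homogeneous $f\in I(X)$, regarded as an element of $S'$, vanishes on $Y$ because the projection $[(x^{v_1},\ldots,x^{v_s},1)]\mapsto[(x^{v_1},\ldots,x^{v_s})]$ maps $Y$ into $X$; and $t_1^{q-1}-t_{s+1}^{q-1}$ vanishes on $Y$ because at a point of $Y$ the last coordinate is $1$ and the first coordinate $x^{v_1}$ is a product of units of $\mathbb{F}_q^*$, so its $(q-1)$-st power equals $1$. Thus there is a graded surjection $S'/I\twoheadrightarrow S'/I(Y)$ with kernel $I(Y)/I$, and the task reduces to showing this kernel is zero.

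Next I would compute $\deg(S'/I)$. Writing $S'/I(X)S'\cong(S/I(X))[u]$, observe that $S/I(X)$ is reduced of dimension $1$ (the ideal $I(X)$ is radical, and its associated primes are the vanishing ideals of the points of $X$, all of dimension $1$), hence Cohen--Macaulay; consequently so is its polynomial extension $R:=(S/I(X))[u]$, now of dimension $2$ with $\deg R=\deg(S/I(X))=|X|$. The form $g:=t_1^{q-1}-t_{s+1}^{q-1}$ is homogeneous of degree $q-1$ and is a nonzerodivisor on $R$: it cannot lie in any associated prime $\mathfrak{p}[u]$ of $R$, since its coefficient of $u^{q-1}$ is the unit $-1\notin\mathfrak{p}$. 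From the exact sequence $0\to R(-(q-1))\xrightarrow{\,g\,}R\to R/gR\to 0$ the Hilbert series is multiplied by $(1-t^{q-1})$, so $R/gR=S'/I$ is Cohen--Macaulay of dimension $1$ with $\deg(S'/I)=(q-1)\deg R=(q-1)|X|$.

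Finally I would combine the two computations. Under either hypothesis, Proposition~\ref{maria-cafeteria-cinvestav-gen} gives $|Y|=(q-1)|X|$, so $\deg(S'/I)=|Y|=\deg(S'/I(Y))$. As both quotients are $1$-dimensional, their Hilbert polynomials are the same constant, forcing the kernel $I(Y)/I$ to have Hilbert polynomial $0$, i.e.\ finite length. But $S'/I$ is Cohen--Macaulay of dimension $1$, hence has positive depth and thus no nonzero finite-length submodule; therefore $I(Y)/I=0$ and $I=I(Y)$. The main obstacle is the middle paragraph---verifying that $g$ is a nonzerodivisor and tracking the degree through the Cohen--Macaulay hypersurface reduction---while the closing depth argument is exactly what upgrades the equality of degrees into the desired equality of ideals.
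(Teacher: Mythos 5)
Your proof is correct, but it follows a genuinely different route from the paper's. The paper argues directly on the ideal: it uses that $I(Y)$ is a lattice ideal (so any counterexample can be taken to be a binomial minimal generator $t^a-t^b$ of least degree), then uses the hypotheses (a) resp.\ (b) to exhibit explicit points $[(1,\ldots,1,\beta)]$ resp.\ $[(\beta^k,\ldots,\beta^k,1)]$ of $Y$, which force the exponent of $t_{s+1}$ in $t^b$ to be a multiple of $q-1$; subtracting a suitable multiple of $t_1^{q-1}-t_{s+1}^{q-1}$ then produces a lower-degree binomial in $I(Y)\setminus I'$, a contradiction. You instead localize the role of hypotheses (a) and (b) entirely inside Proposition~\ref{maria-cafeteria-cinvestav-gen} (the counting statement $|Y|=(q-1)|X|$) and convert the question into a multiplicity comparison: $I(X)$ is radical of dimension $1$, hence $S/I(X)$ is Cohen--Macaulay, $t_1^{q-1}-u^{q-1}$ is a nonzerodivisor on $(S/I(X))[u]$ (the coefficient $-1$ of $u^{q-1}$ escapes every associated prime $\mathfrak{p}[u]$), so $\deg(S'/I)=(q-1)|X|=|Y|=\deg(S'/I(Y))$, and positive depth of $S'/I$ rules out the finite-length kernel $I(Y)/I$. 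All steps check out, including the depth argument that upgrades equality of degrees to equality of ideals. Note what each approach buys: the paper's binomial descent is self-contained and does not presuppose the degree formula---indeed the paper later \emph{deduces} $|Y|=(q-1)|X|$ and the Hilbert-series relation from the ideal equality (Proposition~\ref{jul5-11}); your argument runs the implication in the opposite direction and actually proves the stronger, hypothesis-free statement that for any clutter $|Y|=(q-1)|X|$ implies $I(Y)=I(X)+(t_1^{q-1}-t_{s+1}^{q-1})$, which combined with Proposition~\ref{jul5-11}(b) shows the ideal equality is \emph{equivalent} to the degree equality. There is no circularity, since Proposition~\ref{maria-cafeteria-cinvestav-gen} is proved by group-theoretic means independent of this theorem.
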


\begin{proof} We set $I'=I(X)+(t_1^{q-1}-t_{s+1}^{q-1})$. Notice that
$I'=I(X)+(t_i^{q-1}-t_{s+1}^{q-1})$ for any $1\leq i\leq s$. Clearly
$I'\subset I(Y)$. To show the reverse inclusion we proceed by
contradiction. Assume there is a homogeneous polynomial $f\in
I(Y)\setminus I'$. The ideal $I(Y)$ is a lattice ideal
\cite[Theorem~2.1]{algcodes}, i.e., $I(Y)$ is a binomial ideal and $t_i$ is not a
zero divisor of $S[u]/I(Y)$ for all $i$. 
Thus, we may assume that $f$ is a binomial which is a minimal
generator of $I(Y)$. Hence, we can write 
\begin{equation}\label{jun19-11}
f=t_1^{a_1}\cdots t_s^{a_s}t_{s+1}^{a_{s+1}}-t_1^{b_1}\cdots
t_s^{b_s}t_{s+1}^{b_{s+1}},
\end{equation}
such that for each $1\leq j\leq s+1$ either $a_j=0$ or $b_j=0$. We may
also assume that $a_{s+1}=0$, $b_{s+1}>0$, $a_i>0$, $b_i=0$ for some $i$. For
simplicity we assume that $i=1$. We can choose $f$ of least possible degree,
i.e., any binomial in $I(Y)$ of degree less than $\deg(f)$ belongs to
$I'$. Let $\beta$ be a generator of the cyclic group
$(\mathbb{F}_q^*,\cdot)$. 

(a) We may assume that $A=\{y_1,\ldots,y_\ell\}$. Making
$x_i=\beta^{-1}$ for $1\leq i\leq \ell$ and $x_i=1$ for $i>\ell$ in 
$[(x^{v_1},\ldots,x^{v_s},1)]$, we get 
\begin{eqnarray*}
[(x^{v_1},\ldots,x^{v_s},1)]&=&
\left[\left(\left({\beta^{-1}}\right)^{v_{11}}\cdots\left({\beta^{-1}}\right)^{v_{1\ell}}
,\ldots, \left({\beta^{-1}}\right)^{v_{s1}} \cdots
\left({\beta^{-1}}\right)^{v_{s\ell}},1 
\right)\right]\\ 
&=&[(\beta^{-1},\ldots,\beta^{-1},1)]=
[(1,\ldots,1,\beta)].
\end{eqnarray*}
Thus, $[(1,\ldots,1,\beta)]\in Y$. Then, from Eq.~(\ref{jun19-11})
and using that $f$ vanishes on 
$Y$, we get that $\beta^{b_{s+1}}=1$. Thus, $b_{s+1}=r(q-1)$
for some integer $r$. From the equality
\begin{eqnarray*}
& &f-t_{s+1}^{(r-1)(q-1)}t_1^{b_1}\cdots
t_s^{b_s}(t_1^{q-1}-t_{s+1}^{q-1})\\
& &\ \ \ \ \ \ \ \ \ \ \ \ \ \ \ \ \ \ \ \ \ \ \ \ 
=(t_1^{a_1}\cdots
t_s^{a_s}t_{s+1}^{a_{s+1}}-t_{s+1}^{(r-1)(q-1)}t_1^{b_1+(q-1)}t_2^{b_2}\cdots
t_s^{b_s})=t_1h,
\end{eqnarray*}
we obtain that the binomial $h$ is homogeneous, belongs to $I(Y)$, and
has degree less than $\deg(f)$. Thus, $h\in I'$. Consequently $f\in
I'$, a contradiction. 

(b) Making $x_i=\beta$ for all $i$ in $[(x^{v_1},\ldots,x^{v_s},1)]$
we obtain that $[(\beta^k,\ldots,\beta^k,1)]$ is in $Y$. Then, using
that $f$ vanishes on $Y$ together with Eq.~(\ref{jun19-11}), we get
that $\beta^{kb_{s+1}}=1$. As $k$ and $q-1$ are relatively prime, we
obtain that $b_{s+1}=r(q-1)$ for some integer $r$. Hence, we may
proceed as in (a) to derive a contradiction.
\end{proof}

\begin{corollary}\label{sunday-jun19-11-night} 
Let $G$ be a graph. If $G$ is bipartite or if $\gcd(q-1,2)=1$, 
then
$$I(Y)=I(X)+(t_1^{q-1}-t_{s+1}^{q-1}).$$
\end{corollary}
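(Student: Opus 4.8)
The plan is to derive this directly from Theorem~\ref{sunday-jun19-11} by matching its two hypotheses to the two cases of the corollary. The key preliminary observation is that, since $G$ is a graph, every edge consists of exactly two vertices, so $G$ is a $2$-uniform clutter in the sense defined just before Proposition~\ref{maria-cafeteria-cinvestav-gen}. This alone settles the second case: if $\gcd(q-1,2)=1$, then taking $k=2$ in part (b) of Theorem~\ref{sunday-jun19-11} gives at once $I(Y)=I(X)+(t_1^{q-1}-t_{s+1}^{q-1})$.

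For the bipartite case, I would invoke part (a) of Theorem~\ref{sunday-jun19-11}, whose hypothesis requires a vertex subset $A$ meeting every edge in exactly one vertex. If $G$ is bipartite with bipartition $(V_1,V_2)$, then by definition every edge of $G$ has precisely one end in $V_1$, so the choice $A:=V_1$ satisfies $|A\cap e|=1$ for all $e\in E_G$. Hence part (a) applies and yields the stated equality of ideals. This is precisely the reduction already used to deduce parts (i) and (ii) of Theorem~\ref{maria-cafeteria-cinvestav} from Proposition~\ref{maria-cafeteria-cinvestav-gen}, now carried out at the level of vanishing ideals rather than of cardinalities.

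I do not expect any real obstacle, since all the substantive work is contained in Theorem~\ref{sunday-jun19-11}; the corollary merely records the two graph-theoretic situations in which its hypotheses are automatically satisfied. The only points to verify are the two translations from clutter language to graph language: that bipartiteness supplies the transversal set needed for part (a), and that a graph is $2$-uniform so that oddness of $q-1$ is exactly the coprimality condition $\gcd(q-1,k)=1$ with $k=2$ needed for part (b).
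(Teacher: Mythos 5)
Your proposal is correct and is essentially identical to the paper's own proof: it deduces the bipartite case from Theorem~\ref{sunday-jun19-11}(a) by taking $A=V_1$ from a bipartition, and the $\gcd(q-1,2)=1$ case from Theorem~\ref{sunday-jun19-11}(b) using that every graph is a $2$-uniform clutter.
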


\begin{proof} If $G$ is bipartite, pick a bipartition $(V_1,V_2)$ of
$G$. Then, the set $V_1$ satisfies that $|V_1\cap e|=1$ for any $e\in
E_G$. Thus, the equality follows from Theorem~\ref{sunday-jun19-11}(a).
If $\gcd(q-1,2)=1$, the equality follows from
Theorem~\ref{sunday-jun19-11}(b) because any graph is $2$-uniform.
\end{proof}

The degree and the regularity of $S/I(X)$ can be computed using Hilbert series as
we now explain. 
The {\it Hilbert series} $F_X(t)$ of
$S/I(X)$ can be 
written as
\begin{equation*}
F_X(t):=\sum_{i=0}^{\infty}H_X(i)t^i=\sum_{i=0}^{\infty}\dim_K(S/I(X))_it^i=
\frac{h_0+h_1t+\cdots+h_rt^r}{1-t},
\end{equation*}
where $h_0,\ldots,h_r$ are positive integers (see \cite{Sta1}). This follows from the
fact that $S/I(X)$ is a Cohen-Macaulay standard algebra of 
dimension $1$ \cite{geramita-cayley-bacharach}. The number $r$ is the
regularity of $S/I(X)$ and $h_0+\cdots+h_r$ is the degree of
$S/I(X)$ (see \cite{Sta1} or \cite[Corollary~4.1.12]{monalg}). 

\begin{proposition}\label{jul5-11} Let $F_X(t)$ and $F_Y(t)$ be the Hilbert series of
$S/I(X)$ and $S[u]/I(Y)$ respectively. If
$I(Y)=I(X)+(t_1^{q-1}-t_{s+1}^{q-1})$, then 
\begin{enumerate}
\item[(a)] $F_Y(t)=F_X(t)(1+t+\cdots+t^{q-2})$,
\item[(b)] $|Y|=(q-1)|X|$,
\item[(c)] ${\rm reg}(S[u]/I(Y))=(q-2)+{\rm reg}(S/I(X))$, where
$u=t_{s+1}$.
\end{enumerate}
\end{proposition}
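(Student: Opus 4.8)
The plan is to realize $S[u]/I(Y)$ as a hypersurface section of a polynomial extension of $S/I(X)$ and to read off all three parts from one short exact sequence. Writing $S'=S[u]$ and extending $I(X)$ to $S'$, the first step is to record the graded isomorphism $S'/I(X)S'\cong (S/I(X))[u]$, a polynomial ring in the single variable $u$ (placed in degree $1$) over $S/I(X)$. Adjoining $K[u]$ multiplies Hilbert series by $1/(1-t)$, so $R:=(S/I(X))[u]$ has Hilbert series $F_R(t)=F_X(t)/(1-t)$. The hypothesis $I(Y)=I(X)+(t_1^{q-1}-u^{q-1})$ says exactly that $S'/I(Y)=R/(\bar g)$, where $\bar g$ is the image in $R$ of the homogeneous element $g:=t_1^{q-1}-u^{q-1}$ of degree $q-1$.

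The key step, and the one I expect to be the main obstacle, is showing that $\bar g$ is a nonzerodivisor on $R$. Since $S/I(X)$ is Cohen-Macaulay of dimension $1$, the ring $R$ is Cohen-Macaulay of dimension $2$; being Cohen-Macaulay it is unmixed, so every associated prime is minimal and $R$ is equidimensional of dimension $2$. If $\bar g$ lay in some associated prime $\mathfrak p$, then $\dim R/(\bar g)\ge \dim R/\mathfrak p=2$; but $R/(\bar g)=S'/I(Y)$ has dimension $1$, a contradiction. Hence $\bar g$ avoids every associated prime and is a nonzerodivisor. (Alternatively one can invoke that $I(Y)$ is a lattice ideal to supply a regular element, but the dimension count is cleanest.) This produces the graded short exact sequence
\[
0\longrightarrow R(-(q-1))\xrightarrow{\ \bar g\ } R\longrightarrow S'/I(Y)\longrightarrow 0,
\]
whence $F_Y(t)=(1-t^{q-1})F_R(t)=(1-t^{q-1})F_X(t)/(1-t)=F_X(t)(1+t+\cdots+t^{q-2})$, which is part (a).

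Parts (b) and (c) then follow by bookkeeping on the Hilbert series, using the facts recalled before the statement: for a $1$-dimensional Cohen-Macaulay standard graded algebra with Hilbert series $h(t)/(1-t)$ in lowest terms, the degree is $h(1)$ and the regularity is $\deg h$. Writing $F_X(t)=h_X(t)/(1-t)$ with $h_X$ the $h$-polynomial of $S/I(X)$ (positive coefficients, top degree $\mathrm{reg}(S/I(X))$), part (a) gives $F_Y(t)=N(t)/(1-t)$ with $N(t)=h_X(t)(1+t+\cdots+t^{q-2})$. Since $q\neq 2$, the factor $1+t+\cdots+t^{q-2}$ is a nonconstant polynomial of degree $q-2$ with positive coefficients, so $N$ has positive coefficients, top degree $\deg h_X+(q-2)$, and $N(1)=h_X(1)(q-1)\neq 0$; in particular $(1-t)\nmid N$, so $N$ is the reduced $h$-polynomial of $S'/I(Y)$ (which is Cohen-Macaulay of dimension $1$). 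Reading off the value at $t=1$ gives $|Y|=N(1)=(q-1)h_X(1)=(q-1)|X|$, which is (b), and reading off the top degree gives ${\rm reg}(S'/I(Y))=\deg N=\deg h_X+(q-2)=(q-2)+{\rm reg}(S/I(X))$, which is (c).
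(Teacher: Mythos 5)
Your proof is correct, and for part (a) it takes a genuinely different route from the paper's. The paper invokes the lattice-ideal property of both $I(X)$ and $I(Y)$ (from \cite{algcodes}) to know that $t_1$ is a nonzerodivisor on $S/I(X)$ and $u$ is a nonzerodivisor on $S[u]/I(Y)$; it then writes down \emph{two} short exact sequences (multiplication by $u$ on $S[u]/I(Y)$, multiplication by $t_1^{q-1}$ on $S/I(X)$), identifies both cokernels with $S/(t_1^{q-1},I(X))$ using the hypothesis, and eliminates the common Hilbert series $F(t)$ from the equations $F_Y=tF_Y+F$ and $F_X=t^{q-1}F_X+F$. You instead realize $S[u]/I(Y)$ as a hypersurface section $R/(\bar g)$ of $R=(S/I(X))[u]$ and prove regularity of the single element $\bar g$ by a dimension count, using only that $R$ is a standard graded Cohen--Macaulay algebra of dimension $2$ (hence unmixed, so zerodivisors lie in minimal primes) and that $\dim S[u]/I(Y)=1$; both inputs are recalled in the paper from \cite{geramita-cayley-bacharach}. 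What your route buys is independence from the binomial structure: it applies verbatim to any pair of ideals satisfying the stated hypothesis and CM/dimension facts, and it makes visible that $S[u]/I(Y)$ is literally a regular hypersurface section of $(S/I(X))[u]$. What the paper's route buys is economy given the structural input it already has: the lattice-ideal theorem hands over the needed regular elements with no argument, at the cost of juggling two sequences instead of one. Parts (b) and (c) are handled identically in both proofs: write each series as a polynomial over $(1-t)$, note the numerators satisfy $g_Y(t)=g_X(t)(1+t+\cdots+t^{q-2})$, and read the degree at $t=1$ and the regularity as the degree of the numerator.
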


\begin{proof} As $I(X)$ and $I(Y)$ are lattice ideals
\cite[Theorem~2.1]{algcodes}, $t_i$ is not a zero divisor of
$S/I(X)$ (resp. $S[u]/I(Y)$) for $1\leq i\leq s$ (resp. $1\leq i\leq
s+1$). Hence, there are
exact sequences
\begin{eqnarray*}
&0\longrightarrow S[u]/I(Y)[-1]\stackrel{u}{\longrightarrow}
S[u]/I(Y)\longrightarrow
S[u]/(u,I(Y))\longrightarrow 0,&\\
&\ \ \ \ \ \ \ \ \ \ \ \ \ \ \ 0\longrightarrow
S/I(X)[-(q-1)]\stackrel{t_1^{q-1}}{\longrightarrow} 
S/I(X)\longrightarrow S/(t_1^{q-1},I(X))\longrightarrow 0.&
\end{eqnarray*}
Therefore, using that $S[u]/(u,I(Y))=S[u]/(u,t_1^{q-1},I(X))\simeq
S/(t_1^{q-1},I(X))$, we get
\begin{eqnarray*}
&F_Y(t)=tF_Y(t)+F(t)\ \mbox{ and }\ F_X(t)=t^{q-1}F_X(t)+F(t),& 
\end{eqnarray*}
where $F(t)$ is the Hilbert series of $S/(t_1^{q-1},I(X))$. Part (a) 
follows readily form these two equations. Recall that
$S[u]/I(Y)$ is also a Cohen-Macaulay standard algebra of dimension $1$
\cite{geramita-cayley-bacharach}. Therefore, there are unique
polynomials $g_Y(t)$ and $g_X(t)$ in $\mathbb{Z}[t]$ such that 
\begin{equation*}
F_Y(t)=g_Y(t)/(1-t)\ \mbox{ and }\ F_X(t)=g_X(t)/(1-t).
\end{equation*}
Hence, from (a), we get 
\begin{equation}\label{aug8-11}
g_Y(t)=g_X(t)(1+t+\cdots+t^{q-2}).
\end{equation}
Making $t=1$ in Eq.~(\ref{aug8-11}), we obtain 
$$
|Y|=g_Y(1)=(q-1)g_X(1)=(q-1)|X|.
$$
This proves (b). Part (c) follows from Eq.~(\ref{aug8-11}) because ${\rm
reg}(S[u]/I(Y))$ is the degree of the polynomial 
$g_Y(t)$ and ${\rm reg}(S/I(X))$ is the degree of the polynomial $g_X(t)$. 
\end{proof}

\begin{lemma}\label{maria-project} Let $X\subset\mathbb{P}^{s-1}$ and
$X'\subset\mathbb{P}^{s'-1}$ be algebraic toric sets  parameterized by
$y^{v_1},\ldots,y^{v_s}$  and $y^{v_1},\ldots,y^{v_{s'}}$  respectively.
If $s\leq s'$ and $|X|=|X'|$, then ${\rm reg}\, S'/I(X')\leq {\rm
reg}\, S/I(X)$, where $S'=K[t_1,\ldots,t_{s'}]$.
\end{lemma}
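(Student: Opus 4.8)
The plan is to compare the Hilbert functions of $S/I(X)$ and $S'/I(X')$ through the evaluation maps that compute them. Recall the standard description: for a finite set of projective points with fixed representatives, the $K$-linear map that sends a homogeneous form of degree $d$ to its vector of values at those representatives has kernel equal to the degree-$d$ part of the vanishing ideal, and hence the Hilbert function in degree $d$ equals the dimension of its image (see \cite{harris}). Writing $m=|X|=|X'|$, both ${\rm ev}_d^{X}$ and ${\rm ev}_d^{X'}$ land in $K^m$, and $H_X(d)$ and $H_{X'}(d)$ are the dimensions of their respective images.

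Next I would relate $X$ and $X'$ by the coordinate projection. Since $X'$ is parameterized by $y^{v_1},\dots,y^{v_{s'}}$ and $s\le s'$, dropping the last $s'-s$ coordinates is well defined on $X'$—every coordinate of a toric point lies in $K^*$—and restricts to a surjective homomorphism of multiplicative groups $\pi\colon X'\to X$. The hypothesis $|X'|=|X|$ then forces $\pi$ to be a bijection. I would choose representatives $P_1',\dots,P_m'$ of the points of $X'$ and set $P_i:=\pi(P_i')$, the vector formed by the first $s$ coordinates of $P_i'$; by bijectivity of $\pi$ these are representatives of the $m$ distinct points of $X$. This compatible choice identifies the two copies of $K^m$ in which the evaluation maps take values.

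The heart of the proof is the inclusion ${\rm Image}({\rm ev}_d^{X})\subseteq{\rm Image}({\rm ev}_d^{X'})$. Every $g\in S_d=K[t_1,\dots,t_s]_d$ is also an element of $S'_d=K[t_1,\dots,t_{s'}]_d$, and since $g$ involves only $t_1,\dots,t_s$ one has $g(P_i')=g(P_i)$ for every $i$, whence ${\rm ev}_d^{X'}(g)={\rm ev}_d^{X}(g)$ as vectors in $K^m$. Letting $g$ range over $S_d$ gives the inclusion, and therefore $H_X(d)\le H_{X'}(d)$ for all $d$. Together with the trivial bound $H_{X'}(d)\le m$, this sandwiches $H_{X'}(d)$ between $H_X(d)$ and the common degree.

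Finally I would pass to regularity. If $d\ge{\rm reg}(S/I(X))$, then $H_X(d)=|X|=m$, and the chain $m=H_X(d)\le H_{X'}(d)\le m$ forces $H_{X'}(d)=|X'|$. Since ${\rm reg}(S'/I(X'))$ is the least $p$ for which $H_{X'}(d)=|X'|$ holds for all $d\ge p$, we conclude ${\rm reg}(S'/I(X'))\le{\rm reg}(S/I(X))$. I expect the only delicate point to be the compatibility of representatives underlying the key inclusion: it is precisely bijectivity of $\pi$—and hence the hypothesis $|X|=|X'|$—that lets us match the points of $X$ and $X'$ so that forms in the smaller ring evaluate identically on corresponding points. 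Everything else is routine bookkeeping with Hilbert functions.
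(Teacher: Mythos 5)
Your proof is correct, and it reaches the key inequality $H_X(d)\le H_{X'}(d)$ by a genuinely different mechanism than the paper, although the endgame (combining that inequality with $|X|=|X'|$ and stabilization of Hilbert functions) is identical. The paper argues ideal-theoretically: since $I(X)$ and $I(X')$ are generated by the homogeneous polynomials vanishing on $X$ and $X'$, one has the contraction $S\cap I(X')=I(X)$, hence a degree-preserving embedding of graded modules $S/I(X)\hookrightarrow S'/I(X')$, and thus $H_X(d)\le H_{X'}(d)$ for all $d\ge 0$; the hypothesis $|X|=|X'|$ enters only at the very end. You instead work on the evaluation side: you use the coordinate-projection map $\pi\colon X'\to X$ (the same epimorphism the paper employs in Lemma~\ref{comparing-md} to compare minimum distances), invoke $|X|=|X'|$ to make it a bijection, match representatives so that forms in $S_d$ evaluate identically on corresponding points, and conclude ${\rm Image}({\rm ev}_d^{X})\subseteq{\rm Image}({\rm ev}_d^{X'})$ inside $K^m$. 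Each route has its advantages. The paper's contraction argument is shorter and shows that $H_X(d)\le H_{X'}(d)$ holds with no hypothesis on cardinalities whatsoever, so the bijectivity you describe as ``precisely'' what is needed is in fact dispensable for the inclusion step (your evaluation argument can also be run with $\pi$ merely surjective, at the cost of composing with an injective repetition map $K^m\to K^{m'}$). Your route, on the other hand, is more concrete and self-contained: it replaces the paper's unproved assertion that ``it is not hard to show'' $S\cap I(X')=I(X)$ with an explicit matching of points and representatives, and it makes transparent the parallel between this lemma and the minimum-distance comparison of Lemma~\ref{comparing-md}, which are really two instances of the same projection trick.
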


\begin{proof} Using that $I(X)$ and $I(X')$
are vanishing ideals generated by homogeneous polynomials, it is not
hard to show that $S\cap I(X')=I(X)$. Hence, we have an inclusion of
graded modules: 
$$
S/I(X)\hookrightarrow
S'/I(X').
$$
Thus, $H_{X}(d)\leq H_{X'}(d)$ for $d\geq 0$. 
Recall that $H_{X}(d)=|X|$ and $H_{X'}(d)=|X'|$ 
for $d\gg 0$ \cite{harris}. Therefore, taking 
into account that $|X|=|X'|$, we obtain: 
$$
{\rm reg}\, S'/I(X')\leq {\rm reg}\, S/I(X),
$$
as required.  
\end{proof}

\begin{remark} As $S/I(X)$  and $S'/I(X')$ are Cohen-Macaulay rings of
the same dimension and of the same degree (multiplicity),  Lemma~\ref{maria-project} can
also be shown using \cite[Proposition~3.1]{BVV}.
\end{remark}

\begin{definition}\label{projectivetorus-def} The algebraic toric set 
$\mathbb{T}=\{[(x_1,\ldots,x_s)]\in\mathbb{P}^{s-1}\vert\, x_i\in
K^*\mbox{ for all }i\}$ is called a {\it projective torus} 
in $\mathbb{P}^{s-1}$. 
\end{definition}

\begin{proposition}{\rm\cite[Theorem~1, Lemma~1]{GRH}}\label{ci-summary} If 
$\mathbb{T}$ is a projective torus 
in $\mathbb{P}^{s-1}$, then
\begin{itemize}
\item[(a)]
$I(\mathbb{T})=(t_1^{q-1}-t_s^{q-1},t_2^{q-1}-t_s^{q-1},\ldots,t_{s-1}^{q-1}-t_s^{q-1})$. 
\item[(b)] $ F_\mathbb{T}(t)=(1-t^{q-1})^{s-1}/(1-t)^s$.  
\item[(c)] ${\rm reg}(S/I(\mathbb{T}))=(s-1)(q-2)$ and ${\rm
deg}(S/I(\mathbb{T}))=(q-1)^{s-1}$.
\end{itemize}
\end{proposition}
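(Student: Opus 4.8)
The plan is to establish part (a) first and then read (b) and (c) off the resulting Hilbert series. Write $J=(t_1^{q-1}-t_s^{q-1},\ldots,t_{s-1}^{q-1}-t_s^{q-1})$. The inclusion $J\subseteq I(\mathbb{T})$ is immediate: every $x\in K^*=\mathbb{F}_q^*$ satisfies $x^{q-1}=1$, so each generator $t_i^{q-1}-t_s^{q-1}$ vanishes at every point of $\mathbb{T}$. The substance of (a) is the reverse inclusion, and my strategy is to avoid the Nullstellensatz (unavailable since $K$ is not algebraically closed) and instead compare multiplicities.

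First I would analyze $J$ via Gr\"obner bases. Fixing a monomial order with $t_1>\cdots>t_s$, the leading term of $t_i^{q-1}-t_s^{q-1}$ is $t_i^{q-1}$, and these leading terms are pairwise coprime for $1\leq i\leq s-1$; hence by Buchberger's coprimality criterion the given binomials form a Gr\"obner basis and $\mathrm{in}(J)=(t_1^{q-1},\ldots,t_{s-1}^{q-1})$ \cite{AL}. Since this initial ideal is a monomial complete intersection of height $s-1$ in $s$ variables, $S/J$ is a one-dimensional Cohen--Macaulay ring (in particular unmixed, with every associated prime of dimension one), and its Hilbert series agrees with that of $S/\mathrm{in}(J)$, namely $(1-t^{q-1})^{s-1}/(1-t)^s$. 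Reading off the $h$-polynomial $(1+t+\cdots+t^{q-2})^{s-1}$ and setting $t=1$ gives $\deg(S/J)=(q-1)^{s-1}$.

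On the geometric side, $\mathbb{T}$ consists of $(q-1)^s$ coordinate tuples identified modulo the $q-1$ scalars of $K^*$, so $|\mathbb{T}|=(q-1)^{s-1}$; since $\mathbb{T}$ is itself a projective algebraic toric set, $\deg(S/I(\mathbb{T}))=|\mathbb{T}|=(q-1)^{s-1}$ by \cite{harris}. Now consider the exact sequence $0\to I(\mathbb{T})/J\to S/J\to S/I(\mathbb{T})\to 0$. This is where the crux lies: because $S/J$ is Cohen--Macaulay of pure dimension one, any nonzero submodule again has dimension one and hence positive multiplicity, so additivity of multiplicity would force $\deg(S/J)=\deg(I(\mathbb{T})/J)+\deg(S/I(\mathbb{T}))$ with a strictly positive middle term, contradicting $\deg(S/J)=\deg(S/I(\mathbb{T}))$. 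Thus $I(\mathbb{T})/J=0$ and (a) follows. I expect this multiplicity comparison to be the main obstacle, since it is precisely what substitutes for the Nullstellensatz over the finite field $K$.

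Finally, (b) is immediate once (a) holds, because then $F_\mathbb{T}(t)=F_{S/J}(t)=(1-t^{q-1})^{s-1}/(1-t)^s$ as computed above. For (c), rewrite this as $F_\mathbb{T}(t)=(1+t+\cdots+t^{q-2})^{s-1}/(1-t)$ and apply the correspondence recalled in the discussion preceding Proposition~\ref{jul5-11}: with $g(t)=(1+t+\cdots+t^{q-2})^{s-1}$, the degree of $S/I(\mathbb{T})$ is $g(1)=(q-1)^{s-1}$ and the regularity is $\deg g=(s-1)(q-2)$.
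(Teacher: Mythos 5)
Your proof is correct; the point to note is that the paper does not prove this proposition at all --- it is quoted from \cite[Theorem~1, Lemma~1]{GRH} --- so there is no internal argument to compare against, and a self-contained derivation like yours is genuinely additional content. Your chain of reasoning checks out at every step: the leading terms $t_1^{q-1},\ldots,t_{s-1}^{q-1}$ are pairwise coprime, so Buchberger's first criterion makes the binomials a Gr\"obner basis and gives $F_{S/J}(t)=(1-t^{q-1})^{s-1}/(1-t)^s$, whence $\deg(S/J)=(q-1)^{s-1}$; since $J$ is a height $s-1$ ideal generated by $s-1$ forms, $S/J$ is Cohen--Macaulay of dimension one, so $\mathrm{Ass}(S/J)$ contains only dimension-one primes and every nonzero submodule of $S/J$ has a constant, strictly positive Hilbert polynomial; additivity of Hilbert functions along $0\to I(\mathbb{T})/J\to S/J\to S/I(\mathbb{T})\to 0$, together with $\deg(S/I(\mathbb{T}))=|\mathbb{T}|=(q-1)^{s-1}=\deg(S/J)$, then forces $I(\mathbb{T})/J=0$. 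Parts (b) and (c) follow as you say from the correspondence, recalled before Proposition~\ref{jul5-11}, between the $h$-polynomial of a one-dimensional Cohen--Macaulay graded algebra and its regularity and degree. It is worth observing that your degree-comparison device, which substitutes for the unavailable Nullstellensatz, is exactly in the spirit of arguments the paper uses elsewhere: the identification $X_3=\mathbb{T}'$ in the proof of Theorem~\ref{upper-lower-bounds-reg-bip} is obtained by comparing cardinalities of nested sets, and the affine analogue ($\mathrm{a}_3$)$\Rightarrow$($\mathrm{a}_4$) of part (a) in Theorem~\ref{ci-affine} is proved by a different route (division algorithm plus the Combinatorial Nullstellensatz \cite{alon-cn}), which, combined with the elimination-order argument given there, would yield an alternative proof of (a).
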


\begin{definition} Let $G$ be a bipartite graph with bipartition
$(V_1,V_2)$. If every vertex in $V_1$ is joined to every vertex in
$V_2$, then $G$ is called a {\it complete bipartite graph}. If $V_1$
and $V_2$ have $s_1$ and $s_2$ vertices respectively, we 
denote a complete bipartite graph by 
${\mathcal K}_{s_1,s_2}$. A {\it spanning subgraph\/} of a graph $G$ is a 
subgraph containing all the vertices of $G$. 
\end{definition}

\begin{theorem}\label{upper-lower-bounds-reg-bip} 
Let $G$ be a connected bipartite graph with
bipartition $(V_1,V_2)$ and let $X$ be the projective algebraic toric set parameterized by
the edges of $G$. If $|V_2|\leq|V_1|$, then 
$$
(|V_1|-1)(q-2)\leq{\rm reg}\, S/I(X)\leq (|V_1|+|V_2|-2)(q-2).
$$
Furthermore, equality on the left occurs if $G$ is a complete bipartite
graph and equality on the right occurs if $G$ is a tree.
\end{theorem}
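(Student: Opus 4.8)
The plan is to bracket $\mathrm{reg}\, S/I(X)$ between the regularities of two extremal graphs sharing the bipartition $(V_1,V_2)$, using the monotonicity of regularity supplied by Lemma~\ref{maria-project}. The essential observation is that, by Corollary~\ref{maria-cafeteria-cinvestav-1}, every connected bipartite graph on the vertex set of $G$ yields an algebraic toric set of the same cardinality $(q-1)^{n-2}$, where $n=|V_1|+|V_2|$; hence Lemma~\ref{maria-project} applies whenever we pass between two such graphs, telling us that adding edges (enlarging the number of parameters while keeping the degree fixed) can only decrease the regularity. I would set $s_1=|V_1|\ge s_2=|V_2|$ and fix a spanning tree $T$ of $G$ together with the complete bipartite graph $\mathcal{K}=\mathcal{K}_{s_1,s_2}$, recording the edge inclusions $E(T)\subset E(G)\subset E(\mathcal{K})$.

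For the upper bound I would first note that the toric set $X_T$ of the tree is a projective torus: since $T$ has $n-1$ edges, $X_T\subset\mathbb{P}^{n-2}$ lies in the torus $\mathbb{T}\subset\mathbb{P}^{n-2}$ (each coordinate $x^{v_i}$ is a product of units, hence nonzero), while $|X_T|=(q-1)^{n-2}=|\mathbb{T}|$, so equality of cardinalities forces $X_T=\mathbb{T}$. By Proposition~\ref{ci-summary}(c) this gives $\mathrm{reg}\, S_T/I(X_T)=(n-2)(q-2)=(s_1+s_2-2)(q-2)$. Ordering the edges of $G$ so that those of $T$ come first, Lemma~\ref{maria-project} (the set with fewer edges, here $X_T$, carries the larger regularity) yields $\mathrm{reg}\, S/I(X)\le (s_1+s_2-2)(q-2)$, with equality forced when $G=T$ is a tree.

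For the lower bound I would run the same comparison against $\mathcal{K}$. Ordering the edges of $\mathcal{K}$ so that those of $G$ come first, Lemma~\ref{maria-project} gives $\mathrm{reg}\, S_{\mathcal{K}}/I(X_{\mathcal{K}})\le\mathrm{reg}\, S/I(X)$, so everything reduces to showing $\mathrm{reg}\, S_{\mathcal{K}}/I(X_{\mathcal{K}})=(s_1-1)(q-2)$; this simultaneously delivers the lower bound and the claimed equality for complete bipartite $G$. To compute it I would identify $X_{\mathcal{K}}$ with the Segre image of $\mathbb{T}_1\times\mathbb{T}_2$, with $\mathbb{T}_1\subset\mathbb{P}^{s_1-1}$ and $\mathbb{T}_2\subset\mathbb{P}^{s_2-1}$: writing a point as $[(a_ib_j)_{i,j}]$, the map $([a],[b])\mapsto[(a_ib_j)]$ is a bijection. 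A degree-$d$ monomial $\prod t_{ij}^{c_{ij}}$ restricts to the bidegree-$(d,d)$ monomial $\prod_i a_i^{\sum_j c_{ij}}\prod_j b_j^{\sum_i c_{ij}}$, and a transportation-type argument shows every bidegree-$(d,d)$ monomial arises this way. Thus $(S_{\mathcal{K}}/I(X_{\mathcal{K}}))_d$ is the bidegree-$(d,d)$ part of the function ring of $\mathbb{T}_1\times\mathbb{T}_2$, giving
$$
H_{X_{\mathcal{K}}}(d)=H_{\mathbb{T}_1}(d)\,H_{\mathbb{T}_2}(d).
$$

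Since each $H_{\mathbb{T}_i}$ is nondecreasing and attains its eventual value $(q-1)^{s_i-1}$ precisely at $d=(s_i-1)(q-2)$ (Proposition~\ref{ci-summary}), the product attains $|X_{\mathcal{K}}|=(q-1)^{s_1+s_2-2}$ exactly when both factors do, i.e. for $d\ge\max\{s_1-1,s_2-1\}(q-2)=(s_1-1)(q-2)$, and is strictly smaller before. Hence $\mathrm{reg}\, S_{\mathcal{K}}/I(X_{\mathcal{K}})=(s_1-1)(q-2)=(|V_1|-1)(q-2)$, completing the argument. I expect the main obstacle to be this final computation: justifying rigorously that restriction of degree-$d$ forms surjects onto the bidegree-$(d,d)$ functions on $\mathbb{T}_1\times\mathbb{T}_2$ (the transportation/flow step) and that the Hilbert function of the Segre image genuinely factors, which is what pins down the complete bipartite regularity. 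By contrast, the two applications of Lemma~\ref{maria-project} and the identification of $X_T$ with a torus are comparatively routine.
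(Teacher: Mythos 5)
Your proposal is correct and follows essentially the same route as the paper's proof: sandwich $G$ between a spanning tree and the complete bipartite graph $\mathcal{K}_{s_1,s_2}$, identify the tree's toric set with a projective torus by the cardinality count $|X|=(q-1)^{n-2}$, and apply Lemma~\ref{maria-project} in both directions, with the two extremal computations also yielding the stated equality cases. The only divergence is that the paper simply cites \cite{GR} for the factorization $H_{X'}(d)=H_{X_1}(d)H_{X_2}(d)$ of the Hilbert function of the complete bipartite graph, whereas you sketch a direct proof via the Segre product and a transportation argument; that sketch is sound and completable, since a nonnegative integer matrix with prescribed row and column sums exists whenever the totals agree, and the span of pointwise products of the two evaluation images on $\mathbb{T}_1\times\mathbb{T}_2$ is their tensor product, so its dimension is the product of the dimensions.
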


\begin{proof} We set $|V_i|=s_i$ for $i=1,2$. First we prove the 
inequality on the left. Let $X_1\subset\mathbb{P}^{s_1-1}$ and
$X_2\subset\mathbb{P}^{s_2-1}$ be two projective torus and let
$X'\subset\mathbb{P}^{s_1s_2-1}$ be
the algebraic toric set parameterized by the edges of the complete
bipartite graph $\mathcal{K}_{s_1,s_2}$ with bipartition $(V_1,V_2)$.
According to \cite{GR} the corresponding Hilbert functions are related by 
$$
H_{X'}(d)=H_{X_1}(d)H_{X_2}(d)\ \mbox{ for }\ d\geq 0.
$$ 
By Proposition~\ref{ci-summary}(c) the regularity index of 
$K[t_1,\ldots,t_{s_i}]/I(X_i)$ is equal to $(s_i-1)(q-2)$.
Hence, the regularity index of $K[t_1,\ldots,t_{s_1s_2}]/I(X')$ is
equal to $(s_1-1)(q-2)$. Therefore, taking into account that $|X|=|X'|=(q-1)^{s_1+s_2-2}$
\cite{algcodes}, by Lemma~\ref{maria-project} we obtain: 
$$
(s_1-1)(q-2)={\rm reg}\, K[t_1,\ldots,t_{s_1s_2}]/I(X')\leq {\rm
reg}\, S/I(X),
$$
as required. 

Next, we prove the inequality on the right. 
Let $H$ be an spanning tree of $G$, that is, $H$ is a subgraph of $G$
such that $H$ is a tree that contains every vertex of $G$. Consider
the projective algebraic toric set $X_3$ parameterized by the edges of $H$. We
may assume that $v_1,\ldots,v_{s_1+s_2-1}$ are the characteristic
vectors of the edges of $H$. As $H$ is a tree, by
\cite[Corollary~3.8]{algcodes}, one has
$|X_3|=(q-1)^{s_1+s_2-2}$. Since
$X_3$ is contained in a projective torus $\mathbb{T}'$ in
$\mathbb{P}^{s_1+s_2-2}$ and since $|\mathbb{T}'|=(q-1)^{s_1+s_2-2}$, 
we get that $X_3=\mathbb{T}'$, that is, $X_3$ is a projective torus 
in $\mathbb{P}^{s_1+s_2-2}$. Therefore, by
Proposition~\ref{ci-summary}(c), we obtain
\begin{equation}\label{jun10-11}
{\rm reg}\, K[t_1,\ldots,t_{s_1+s_2-1}]/I(X_3)=(s_1+s_2-2)(q-2).
\end{equation}
Using
\cite[Corollary~3.8]{algcodes}, we get that $|X|$ and $|X_3|$ are 
equal to $(q-1)^{s_1+s_2-2}$. Then, by Lemma~\ref{maria-project} and
Eq.~(\ref{jun10-11}), we
get
$$
{\rm reg}\, S/I(X)\leq {\rm reg}\,
K[t_1,\ldots,t_{s_1+s_2-1}]/I(X_3)=(s_1+s_2-2)(q-2),
$$
as required. 
\end{proof}

A connected graph is always a spanning subgraph of a complete graph. 
An interesting open problem is to compute the regularity of $S/I(X)$
for a complete graph because---using Lemma~\ref{maria-project} and
\cite[Corollary 3.8]{algcodes}---this would
give an optimal lower bound for the regularity
of any connected non-bipartite graph (see the proof of
Theorem~\ref{upper-lower-bounds-reg-bip}). 

For even cycles the regularity of $S/I(X)$ and the basic parameters of
parameterized codes over even cycles were studied in \cite{even-cycles}. 

\begin{corollary}{\cite[Corollary~3.1]{even-cycles}}\label{jun11-11-1} 
 If $G$ is an even cycle of length $2k$, then 
$${\rm reg}\, S/I(X)\geq (k-1)(q-2).$$
\end{corollary}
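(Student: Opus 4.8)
The plan is to recognize the even cycle as a connected bipartite graph and then read off the bound directly from the lower inequality of Theorem~\ref{upper-lower-bounds-reg-bip}; no new machinery is needed. First I would fix a standard labeling of the cycle $G$ of length $2k$, with vertices $y_1,\ldots,y_{2k}$ and edges $\{y_i,y_{i+1}\}$ for $1\le i\le 2k-1$ together with $\{y_{2k},y_1\}$. Being connected and of even length, $G$ is bipartite, and walking around the cycle shows that its two color classes are exactly the odd- and even-indexed vertices,
$$
V_1=\{y_1,y_3,\ldots,y_{2k-1}\},\qquad V_2=\{y_2,y_4,\ldots,y_{2k}\},
$$
so that $|V_1|=|V_2|=k$. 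In particular the hypothesis $|V_2|\le|V_1|$ of Theorem~\ref{upper-lower-bounds-reg-bip} holds.

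With these sizes in hand I would simply substitute into the left-hand inequality of Theorem~\ref{upper-lower-bounds-reg-bip}. Since $|V_1|=k$, it reads
$$
(k-1)(q-2)=(|V_1|-1)(q-2)\le{\rm reg}\,S/I(X),
$$
which is precisely the assertion. Thus the corollary is an immediate specialization of the general bipartite estimate, and the Hilbert-function comparison underlying that theorem (via Lemma~\ref{maria-project} and Proposition~\ref{ci-summary}) need not be revisited.

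The only step meriting any care is pinning down that the two color classes have equal size $k$; this is the elementary observation that a cycle alternates between its two classes, and it is what fixes the exact constant $k-1$ in the bound. I therefore anticipate no genuine obstacle: once the bipartition sizes are identified the result follows at once. As a consistency check one may note that for $k=2$ the cycle $G$ is the complete bipartite graph $\mathcal{K}_{2,2}$, where Theorem~\ref{upper-lower-bounds-reg-bip} predicts equality on the left, in agreement with the stated inequality.
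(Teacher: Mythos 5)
Your proposal is correct and follows exactly the paper's own proof: identify the even cycle as a connected bipartite graph with bipartition classes of equal size $k$, then apply the left-hand inequality of Theorem~\ref{upper-lower-bounds-reg-bip}. The paper's argument is the same two-line specialization, so there is nothing to add.
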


\begin{proof} If $(V_1,V_2)$ is the bipartition of $G$, then
$|V_1|=|V_2|=k$. Hence, the inequality follows from
Theorem~\ref{upper-lower-bounds-reg-bip}. 
\end{proof}

The reverse inequality is also true but it is much harder to prove.

\begin{theorem}{\cite{neves-vaz-pinto-preprint}}
\label{neves-vaz-pinto}\label{jun11-11-2}
If $G$ is an even cycle of length $2k$, then 
$${\rm reg}(S/I(X))\leq (k-1)(q-2).$$
\end{theorem}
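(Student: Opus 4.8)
\emph{The plan is to compute the Hilbert function of $S/I(X)$ combinatorially and read off the regularity.} Since $S/I(X)$ is Cohen--Macaulay of dimension one, $H_X(d)$ is nondecreasing in $d$ and bounded above by $|X|=(q-1)^{2k-2}$; hence ${\rm reg}(S/I(X))$ is the least $p$ with $H_X(d)=|X|$ for all $d\ge p$, and it suffices to prove that $H_X(d)=|X|$ for every $d\ge (k-1)(q-2)$. Together with Corollary~\ref{jun11-11-1} this would in fact give equality. So the whole problem reduces to deciding when the degree-$d$ monomials already span every function on the finite set $X$.

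To make this explicit I would use the lattice-ideal structure of $I(X)$ (see \cite[Theorem~2.1]{algcodes}). Label the edges of the cycle so that $V_1=\{y_1,y_3,\dots\}$, $V_2=\{y_2,y_4,\dots\}$ and the single toric relation is $t_1t_3\cdots t_{2k-1}-t_2t_4\cdots t_{2k}$; set $a^{*}=\sum_{i\ \mathrm{odd}}e_i-\sum_{i\ \mathrm{even}}e_i$ and $L=(q-1)H+\mathbb{Z}\,a^{*}\subset\mathbb{Z}^{2k}$, where $H=\{u:\sum_iu_i=0\}$. The geometric input here is that $X$ is the kernel of the character $[z]\mapsto z_1z_3\cdots z_{2k-1}(z_2z_4\cdots z_{2k})^{-1}$ on the projective torus, so that $L$ is exactly the lattice of monomial relations of $X$ and $[H:L]=|X|$. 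Consequently two monomials restrict to the same function on $X$ precisely when their exponent vectors lie in one coset of $L$, distinct cosets give linearly independent functions, and therefore $H_X(d)$ equals the number of cosets of $L$ of degree $d$ that possess a \emph{nonnegative} representative; it equals $|X|$ exactly when every degree-$d$ coset does. Inside a fixed coset one may freely add $(q-1)(e_i-e_j)$ (move a block of $q-1$ between two coordinates) or a multiple of $a^{*}$ (raise odd coordinates by one, lower even coordinates by one), both of which preserve $\sum_ic_i$.

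The core is then purely arithmetic. Writing $b_i=c_i\bmod(q-1)\in\{0,\dots,q-2\}$ for a representative $c$, the coset has a nonnegative representative of degree $d$ if and only if $\Phi(c)\le d$, where
$$\Phi(c)=\min_{0\le m<q-1}\ \sum_{i=1}^{2k}\big((b_i+(-1)^{i+1}m)\bmod(q-1)\big),$$
because after choosing the shift $m$ the deficit $d-\sum_i r_i$ is automatically a nonnegative multiple of $q-1$ and can be spread out in $(q-1)$-blocks. Now two elementary facts combine. First, averaging over the $q-1$ values of $m$ and using that each summand runs once through $\{0,\dots,q-2\}$ gives $\Phi(c)\le k(q-2)$ at once. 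Second, for \emph{every} $m$ one has $\sum_i r_i\equiv\sum_ib_i\equiv d\pmod{q-1}$ (this is where $\sum_i(-1)^{i+1}=0$, i.e. the equal color classes of the even cycle, is used), so $\Phi(c)\equiv d\pmod{q-1}$. If now $d\ge(k-1)(q-2)$ and $\Phi(c)>d$, then $\Phi(c)\ge d+(q-1)\ge(k-1)(q-2)+(q-1)=k(q-2)+1$, contradicting $\Phi(c)\le k(q-2)$; hence $\Phi(c)\le d$ and the coset is representable. This yields $H_X(d)=|X|$ for all $d\ge(k-1)(q-2)$, as required.

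The step I expect to carry the real weight is the passage from the crude averaging bound $k(q-2)$ to the sharp $(k-1)(q-2)$: the averaging alone only shows ${\rm reg}\le k(q-2)$, and the extra factor $q-2$ is recovered solely from the congruence $\Phi(c)\equiv d\pmod{q-1}$, which in turn rests on $\sum_i(-1)^{i+1}=0$, the balance of the two sides of the bipartition. Thus the whole improvement over the spanning-tree bound $(2k-2)(q-2)$ of Theorem~\ref{upper-lower-bounds-reg-bip} is forced by the even-cycle structure, and the main care in writing the proof goes into the lattice identification $I(X)=I_L$ with the stated $L$ and into checking that the block moves and the shift $a^{*}$ genuinely realize every admissible nonnegative representative.
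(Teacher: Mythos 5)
Your proof is correct, and it is worth emphasizing that the paper itself contains no proof of this statement: Theorem~\ref{jun11-11-2} is quoted from the preprint \cite{neves-vaz-pinto-preprint} and explicitly flagged as ``much harder to prove'' than the lower bound of Corollary~\ref{jun11-11-1}. So your argument is a genuinely independent, self-contained route. I checked the three points where the weight falls, and they all hold. First, the lattice identification: with $L'=\{c\in H:\sum_i c_iv_i\equiv 0\pmod{q-1}\}$ one has $I(X)=I_{L'}$ (every binomial in $I(X)$ is homogeneous since no monomial vanishes on $X$, and a homogeneous binomial $t^a-t^b$ vanishes on $X$ iff $a-b\in L'$), and your $L=(q-1)H+\mathbb{Z}a^{*}$ equals $L'$: the inclusion $L\subseteq L'$ is immediate, $[H:L]=(q-1)^{2k-2}$ because $a^{*}$ is unimodular in $H$, and $[H:L']=\deg S/I(X)=|X|=(q-1)^{2k-2}$ by \cite[Corollary~3.8]{algcodes}, so the two lattices coincide; your character-kernel argument ($X=\ker\chi$ by comparing cardinalities, then $\mathrm{Ann}(X)=\langle\chi\rangle$ by duality) is an equivalent and clean way to say this. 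Second, the Hilbert-function formula: distinct $L$-cosets of degree-$d$ monomials restrict to distinct characters of $(K^{*})^{n}$ on $X^{*}$, hence are linearly independent by Dedekind's independence of characters (alternatively, use the $\mathbb{Z}^{2k}/L$-grading of $S/I_L$), so $H_X(d)$ is exactly the number of cosets with a nonnegative degree-$d$ representative. Third, the criterion $\Phi(c)\le d$: once the shift $m$ is fixed, writing $c'_i=r_i+(q-1)s_i$ with $0\le r_i\le q-2$, a nonnegative representative in $c'+(q-1)H$ exists iff $\sum_i s_i\ge 0$ iff $\sum_i r_i\le d$, as you say. The averaging bound $\Phi(c)\le k(q-2)$ (each coordinate sweeps $\{0,\dots,q-2\}$ once as $m$ varies) and the congruence $\Phi(c)\equiv d\pmod{q-1}$ (using that the cycle is even, so $\sum_i(-1)^{i+1}=0$) then force $\Phi(c)\le d$ whenever $d\ge(k-1)(q-2)$, since otherwise $\Phi(c)\ge d+(q-1)\ge k(q-2)+1$. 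This gives $H_X(d)=|X|$ for all $d\ge(k-1)(q-2)$ and hence the regularity bound; combined with Corollary~\ref{jun11-11-1} it even gives equality, recovering Corollary~\ref{jun11-11-3}. When writing this up, the two items you flagged yourself (the equality $I(X)=I_L$ and the linear independence of distinct cosets) are exactly the ones that must be spelled out; both are standard and go through as indicated.
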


A cycle containing all the vertices of a graph is called a {\it
Hamilton cycle}. A graph containing a Hamilton cycle is
called {\it Hamiltonian}.   

\begin{corollary}\label{jun11-11-3} 
If $G$ is a Hamiltonian bipartite graph with $2k$
vertices, then 
$${\rm reg}(S/I(X))=(k-1)(q-2).$$
\end{corollary}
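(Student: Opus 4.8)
The plan is to sandwich ${\rm reg}(S/I(X))$ between a lower and an upper bound that coincide, using the Hamilton cycle as a bridge. Since $G$ is Hamiltonian it is connected, and since it is bipartite its Hamilton cycle $C$ is an even cycle of length $2k$ whose vertices alternate between the two color classes; as $G$ is connected bipartite it has a unique bipartition $(V_1,V_2)$ (by the Remark following the definition of bipartite graph), and the fact that the spanning even cycle $C$ is properly $2$-colored forces $|V_1|=|V_2|=k$. Applying the left-hand inequality of Theorem~\ref{upper-lower-bounds-reg-bip} with $|V_1|=k$ then gives the lower bound ${\rm reg}\, S/I(X)\geq (k-1)(q-2)$.

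For the upper bound I would compare $X$ with the toric set coming from the cycle. Order the edges of $G$ so that $v_1,\ldots,v_{2k}$ are the characteristic vectors of the edges of $C$, and let $X_C\subset\mathbb{P}^{2k-1}$ be the projective toric set parameterized by $y^{v_1},\ldots,y^{v_{2k}}$. Both $C$ and $G$ are connected bipartite graphs on $2k$ vertices, so the formula $|X|=(q-1)^{n-2}$ recorded in the proof of Corollary~\ref{maria-cafeteria-cinvestav-1} yields $|X_C|=|X|=(q-1)^{2k-2}$. Since $X_C$ is parameterized by the first $2k\leq s$ of the monomials defining $X$ and the two cardinalities agree, Lemma~\ref{maria-project} gives ${\rm reg}\, S/I(X)\leq {\rm reg}\, S_C/I(X_C)$, where $S_C=K[t_1,\ldots,t_{2k}]$.

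Finally, the cycle $C$ is an even cycle of length $2k$, so Theorem~\ref{neves-vaz-pinto} applies and gives ${\rm reg}\, S_C/I(X_C)\leq (k-1)(q-2)$. Chaining the two inequalities produces ${\rm reg}\, S/I(X)\leq (k-1)(q-2)$, and together with the lower bound this forces the equality ${\rm reg}(S/I(X))=(k-1)(q-2)$.

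The genuinely hard input---the upper bound $(k-1)(q-2)$ for the even cycle itself---is precisely Theorem~\ref{neves-vaz-pinto}, quoted from \cite{neves-vaz-pinto-preprint}, so the real work of this corollary is the bootstrap from the cycle to the full Hamiltonian graph. I expect the only delicate point to be the verification that $|X|=|X_C|$: this is where one must invoke the fact that for a connected bipartite graph the number of points of the toric set depends only on the number of vertices, not on the number of edges, so that enlarging $C$ to $G$ leaves the degree unchanged and keeps Lemma~\ref{maria-project} applicable.
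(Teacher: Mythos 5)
Your proposal is correct and takes essentially the same route as the paper: the lower bound is the left-hand inequality of Theorem~\ref{upper-lower-bounds-reg-bip} (with $|V_1|=|V_2|=k$ forced by the spanning even cycle), and the upper bound comes from Lemma~\ref{maria-project} applied to the Hamilton cycle as a spanning subgraph combined with Theorem~\ref{jun11-11-2}. The only cosmetic difference is that the paper also names the complete bipartite graph $\mathcal{K}_{k,k}$ when invoking the lower bound, which is merely the mechanism inside the proof of Theorem~\ref{upper-lower-bounds-reg-bip} that you use as a black box.
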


\begin{proof} Let $(V_1,V_2)$ be the bipartition of $G$, let $H$ be
a Hamilton cycle of $G$, and let $\mathcal{K}_{k,k}$ be the complete
bipartite graph with bipartition $(V_1,V_2)$. Notice 
that $H$ is a spanning subgraph of $G$ and $G$ is a spanning subgraph of
$\mathcal{K}_{k,k}$. Therefore, applying Lemma~\ref{maria-project}
together with Theorem~\ref{upper-lower-bounds-reg-bip} and
Theorem~\ref{jun11-11-2}, the equality follows.
\end{proof}

The next open problem is known as the Eisenbud-Goto regularity
conjecture \cite{eisenbud-goto}.

\begin{conjecture} If $\mathfrak{p}\subset
(t_1,\ldots,t_s)^2$ is a prime graded ideal of $S$, then 
$${\rm
reg}(S/\mathfrak{p})\leq {\rm
deg}(S/\mathfrak{p})-{\rm codim}(S/\mathfrak{p}).
$$
\end{conjecture}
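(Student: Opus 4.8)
The final statement is the Eisenbud--Goto conjecture in full generality, and in contrast to the theorems preceding it in the paper it is recorded as an open problem rather than proved; so a realistic ``proof proposal'' must describe the strategy that does succeed on the known classes of primes and isolate the obstruction that blocks it in general. The plan is to induct on $\dim(S/\mathfrak{p})$ by repeatedly cutting with a general hyperplane, carried out in the \emph{shrinking} polynomial ring so that all three invariants in the inequality are preserved. Passing to the algebraic closure if necessary (which leaves regularity, degree and codimension unchanged), write $R=S/\mathfrak{p}$ and pick a general linear form $\ell\in S_1$. Since $\mathfrak{p}$ is prime and $\dim R\geq 2$, the form $\ell$ is a nonzerodivisor on $R$, so in $\overline{S}=S/(\ell)\cong K[t_1,\dots,t_{s-1}]$ the image $\overline{\mathfrak{p}}=(\mathfrak{p}+(\ell))/(\ell)$ satisfies
$$\deg(\overline{S}/\overline{\mathfrak{p}})=\deg R,\qquad \operatorname{codim}(\overline{S}/\overline{\mathfrak{p}})=\operatorname{codim}R,\qquad \operatorname{reg}(\overline{S}/\overline{\mathfrak{p}})=\operatorname{reg}R,$$
the codimension being preserved precisely because one drops one variable together with one unit of dimension, and the regularity being preserved because $\ell$ is a general nonzerodivisor. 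The hypothesis $\mathfrak{p}\subset(t_1,\dots,t_s)^2$, i.e. nondegeneracy of the variety, also passes to the general section as long as the dimension stays positive. Thus the problem reduces to the case of nondegenerate reduced irreducible curves (ring dimension two) or finite point sets, and the induction would close the general case.

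For the base case I would invoke the classical estimates: for a nondegenerate set of points spanning $\mathbb{P}^{n}$ the inequality is an elementary Hilbert-function count, and for nondegenerate reduced irreducible projective curves it is the theorem of Gruson--Lazarsfeld--Peskine. The parallel cohomological route would run through the characterization $\operatorname{reg}R=\max\{\,i+j : H^i_{\mathfrak{m}}(R)_j\neq 0\,\}$: one bounds the top local cohomology $H^{\dim R}_{\mathfrak{m}}(R)$ directly from the Hilbert polynomial and the degree, and then controls the lower modules $H^i_{\mathfrak{m}}(R)$ by a Kodaira-type vanishing argument in characteristic zero, translating the positivity of an ample hyperplane section into the vanishing of the relevant graded pieces. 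Either route gives the inequality on the classes where the conjecture is actually a theorem.

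The main obstacle --- and the reason the statement is recorded here as a conjecture rather than a corollary --- lies entirely in the inductive step, not in the base case. The primeness of $\mathfrak{p}$ is indispensable, since the bound is genuinely false for arbitrary graded ideals, yet the hyperplane section $\overline{\mathfrak{p}}$ is almost never prime: a general plane section of an irreducible variety stays connected but becomes reducible once the dimension has dropped far enough, so the inductive hypothesis, which can only be applied to primes, cannot be fed the object the induction produces. Every natural attempt to enlarge the inductive class to something stable under general hyperplane section --- connected, or reduced, or arithmetically normal schemes --- either loses the inequality or loses control of the regularity. The hard part, therefore, is to identify a class of ideals that is \emph{simultaneously} stable under generic hyperplane section and satisfies the Eisenbud--Goto bound; absent such a class, the clean reductions above do not suffice, and a complete proof would have to introduce an idea that breaks this deadlock rather than relying on induction on dimension alone.
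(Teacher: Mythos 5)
You have read the situation correctly: this statement is the Eisenbud--Goto regularity conjecture, recorded in the paper verbatim as an open problem, so there is no paper proof to compare against and a refusal to claim one is the right call. What the paper actually proves is only the special case of vanishing ideals of Hamiltonian bipartite graphs (Corollary~\ref{eisenbud-goto-hamiltonian}), and its method is entirely different from---and far more modest than---the strategy you sketch: no hyperplane-section induction at all, but an exact computation of all three invariants for that class, namely ${\rm reg}(S/I(X))=(q-2)(k-1)$ from Corollary~\ref{jun11-11-3}, ${\rm deg}(S/I(X))=(q-1)^{2k-2}$, and ${\rm codim}(S/I(X))=s-1\leq k^2-1$, closed by the elementary numerical estimate $(k-1)(q+k-1)\leq (q-1)^{2k-2}$ of Lemma~\ref{aug14-11}. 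The paper buys a theorem by restricting to a family where regularity and degree are explicitly known; your sketch describes the general-purpose reduction and why it stalls.

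Two refinements to your diagnosis. First, the obstruction is not quite where you place it: by Bertini, a general hyperplane section of an irreducible nondegenerate variety of dimension at least $2$ remains irreducible (over an algebraically closed field, after the base change you already allow), so irreducibility survives all the way down to curves and reducibility only strikes at the points level, which Gruson--Lazarsfeld--Peskine renders harmless anyway. The genuine inductive failure is subtler: when ${\rm depth}(S/\mathfrak{p})=1$, the image ideal $(\mathfrak{p}+(\ell))/(\ell)$ need not be saturated, so the ring whose regularity you control, $S/(\mathfrak{p}+(\ell))$, is not the coordinate ring of the geometric section, and the inductive hypothesis cannot be applied to it even when the section is irreducible. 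Second, the deadlock you describe is now known to be unbreakable: McCullough and Peeva (J.\ Amer.\ Math.\ Soc.\ \textbf{31} (2018)) constructed nondegenerate graded primes with ${\rm reg}(S/\mathfrak{p})$ exceeding any polynomial function of ${\rm deg}(S/\mathfrak{p})$, so the conjecture as stated here is false, and results such as Corollary~\ref{eisenbud-goto-hamiltonian} for restricted classes are the most one can hope for. Your proposal's caution is thus retroactively vindicated, though its account of the obstruction should be amended as above.
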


There is a version of this conjecture, for square-free monomial
ideals whose Stanley-Reisner complex is connected in codimension $1$, that has been shown in
\cite{terai}. We will show the Eisenbud-Goto regularity conjecture for
vanishing ideals over Hamiltonian bipartite graphs.

\begin{lemma}\label{aug14-11} Let $k\geq 2$ and $q\geq 3$ be two
integers. Then {\rm (i)} $2^{2k-2}\geq (k-1)(k+2)$, and {\rm (ii)} 
$(q-1)^{2k-2}\geq (k-1)(q+k-1).$
\end{lemma}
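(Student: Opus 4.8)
The plan is to prove the two inequalities separately, using (i) as the $q=3$ slice of (ii). For (i) I would proceed by induction on $k$. The base case $k=2$ gives $2^{2}=4=(2-1)(2+2)$, so equality holds. For the inductive step, assuming $2^{2k-2}\geq (k-1)(k+2)$, I would write $2^{2(k+1)-2}=4\cdot 2^{2k-2}\geq 4(k-1)(k+2)$ and then verify the elementary polynomial inequality $4(k-1)(k+2)\geq k(k+3)$, which after expansion reduces to $3k^{2}+k-8\geq 0$; this clearly holds for all $k\geq 2$.

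For (ii) the key observation is that part (i) is exactly the statement of (ii) at $q=3$, since $(3-1)^{2k-2}=2^{2k-2}$ and $(k-1)(3+k-1)=(k-1)(k+2)$. So it suffices to fix $k\geq 2$ and show that the difference
\[
g(q):=(q-1)^{2k-2}-(k-1)(q+k-1)
\]
is nondecreasing on the range $q\geq 3$, after which $g(q)\geq g(3)\geq 0$ by (i). Regarding $q$ as a real variable, I would compute
\[
g'(q)=(2k-2)(q-1)^{2k-3}-(k-1)=(k-1)\bigl[2(q-1)^{2k-3}-1\bigr].
\]
Since $k\geq 2$ forces $2k-3\geq 1$, and $q\geq 3$ gives $q-1\geq 2$, we have $(q-1)^{2k-3}\geq 2$, so the bracket is at least $3>0$. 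Thus $g'(q)>0$ for $q\geq 3$, hence $g$ is increasing and $g(q)\geq g(3)=2^{2k-2}-(k-1)(k+2)\geq 0$ by part (i).

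I expect no serious obstacle here, as both inequalities are elementary: one side grows exponentially and the other polynomially. The only point requiring a little care is the reduction in (ii), namely recognizing that the $q=3$ slice coincides with (i), which turns the apparent two-variable estimate into a pure monotonicity argument in $q$ with a single base case. An alternative, fully discrete route would be a double induction on $k$ and $q$, but the monotonicity argument above is cleaner and avoids checking mixed inductive steps.
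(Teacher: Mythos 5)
Your proposal is correct and follows essentially the same route as the paper: part (i) by induction on $k$ with base case $k=2$, and part (ii) by fixing $k$, using (i) as the base case $q=3$, and propagating the inequality upward in $q$. The only difference is cosmetic: where the paper runs a discrete induction on $q$, you establish monotonicity of $g(q)=(q-1)^{2k-2}-(k-1)(q+k-1)$ for $q\geq 3$ via its derivative, which is equally valid.
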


\begin{proof} The inequality in (i) follows readily by induction on
$k$. The inequality in (ii) follows by induction on $q$ and using (i).
\end{proof}

\begin{corollary}\label{eisenbud-goto-hamiltonian} If $G$ is a Hamiltonian 
bipartite graph, then   
$${\rm
reg}(S/I(X))\leq {\rm
deg}(S/I(X))-{\rm codim}(S/I(X)).
$$
\end{corollary}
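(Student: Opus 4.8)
The plan is to reduce the conjectured inequality to the purely numerical Lemma~\ref{aug14-11}(ii) by computing each of the three quantities $\mathrm{reg}$, $\deg$, and $\mathrm{codim}$ explicitly. First I would invoke Corollary~\ref{jun11-11-3}: since $G$ is a Hamiltonian bipartite graph on $2k$ vertices, its regularity is already pinned down as ${\rm reg}(S/I(X))=(k-1)(q-2)$. The whole proof therefore amounts to showing that $(k-1)(q-2)+{\rm codim}(S/I(X))\leq {\rm deg}(S/I(X))$.

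Next I would identify the codimension and the degree. Because $X$ is a finite subset of $\mathbb{P}^{s-1}$, the ring $S/I(X)$ is Cohen--Macaulay of Krull dimension $1$, so that ${\rm codim}(S/I(X))=\dim S-\dim(S/I(X))=s-1$, where $s=|E_G|$ is the number of variables. For the degree, recall that ${\rm deg}(S/I(X))=|X|$, and since $G$ is a connected bipartite graph with $n=2k$ vertices, $|X|=(q-1)^{n-2}=(q-1)^{2k-2}$ by \cite{algcodes}. The remaining input is a bound on $s$: as $G$ is both bipartite and Hamiltonian, its Hamilton cycle alternates between the two color classes, forcing the bipartition $(V_1,V_2)$ to satisfy $|V_1|=|V_2|=k$. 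Hence $G$ is a spanning subgraph of $\mathcal{K}_{k,k}$, which gives $s=|E_G|\leq k^2$ and therefore ${\rm codim}(S/I(X))=s-1\leq k^2-1$.

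Assembling these, it suffices to verify
$$
(k-1)(q-2)+(k^2-1)\leq (q-1)^{2k-2}.
$$
Here is the one small algebraic observation that makes everything fit: factoring the left-hand side as $(k-1)(q-2)+(k-1)(k+1)=(k-1)(q+k-1)$ turns the required bound into exactly the inequality $(q-1)^{2k-2}\geq (k-1)(q+k-1)$ of Lemma~\ref{aug14-11}(ii).

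I expect no genuine obstacle in this argument, since the hard numerical content has been quarantined inside Lemma~\ref{aug14-11}. The only points demanding care are the bookkeeping items that are easy to get wrong: correctly reading off ${\rm codim}=s-1$ from the one-dimensionality of the Cohen--Macaulay ring, using the Hamiltonicity-plus-bipartiteness to force equal color classes and hence the bound $s\leq k^2$, and spotting the factorization $(k-1)(q-2)+(k^2-1)=(k-1)(q+k-1)$ that aligns the statement precisely with the lemma.
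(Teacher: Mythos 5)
Your proposal is correct and follows essentially the same route as the paper's own proof: pin down ${\rm reg}$ via Corollary~\ref{jun11-11-3}, ${\rm codim}=s-1$ from Cohen--Macaulayness, ${\rm deg}=|X|=(q-1)^{2k-2}$ from \cite{algcodes} (connectedness being automatic for a Hamiltonian graph), bound $s\leq k^2$ using the forced equal bipartition, and reduce via the factorization $(k-1)(q-2)+(k^2-1)=(k-1)(q+k-1)$ to Lemma~\ref{aug14-11}(ii). No discrepancies to report.
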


\begin{proof} The graph $G$ has $s$ edges and $n$ vertices. Since $G$
is Hamiltonian and bipartite, $n=2k$ for some integer $k\geq 2$ and $G$ has a
bipartition $(V_1,V_2)$ with $|V_i|=k$ for $i=1,2$. 
Thus, $s\leq k^2$. Hence, by Lemma~\ref{aug14-11}, we have:
$$
(s-1)+(q-2)(k-1)\leq (k^2-1)+(q-2)(k-1)=(k-1)(q+k-1)\leq (q-1)^{2k-2}.
$$
To complete the proof notice that ${\rm deg}\, S/I(X)$ is 
$|X|=(q-1)^{2k-2}$ \cite[Corollary~3.8]{algcodes}, ${\rm codim}\,
S/I(X)$ is $s-1$ \cite{geramita-cayley-bacharach} and ${\rm reg}\,
S/I(X)$ is $(q-2)(k-1)$ (see Corollary~\ref{jun11-11-3}).
\end{proof}

\begin{definition} Let $\mathcal{C}$ be a clutter and let $y_i$ be a
vertex. We say $y_i$ is a {\it free
vertex} of ${\mathcal C}$ if $y_i$ only appears in one of the edges
of $\mathcal C$. 
\end{definition} 

\begin{definition}\label{support-of-a-vector} If $a\in {\mathbb R}^n$, its {\it
support\/} is defined as ${\rm supp}(a)=\{i\, |\, a_i\neq 0\}$. The
{\it support} of the monomial $y^a$ is defined as 
${\rm supp}(y^a)=\{y_i\, |\, a_i\neq 0\}$.  
\end{definition}

\begin{theorem}\label{june-2011--dictamina} Let $\mathcal{C}$ be a clutter and let $X'$ be the
projective algebraic toric set parameterized by $y^{v_1},\ldots,y^{v_{s-1}}$. If
$y_n$ is a free vertex of $\mathcal{C}$ and $y_n\in{\rm
supp}(y^{v_s})$, then 
\begin{itemize}
\item[(a)] $I(X)=I(X')+(t_1^{q-1}-t_s^{q-1})$.
\item[(b)] ${\rm reg}\, S/I(X)={\rm reg}\, S'/I(X')+(q-2)$, where
$S'=K[t_1,\ldots,t_{s-1}]$. 
\item[(c)] ${\rm deg}\, S/I(X)=(q-1){\rm deg}\, S'/I(X')$.
\end{itemize}
\end{theorem}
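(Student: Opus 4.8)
The plan is to prove (a) directly and then deduce (b) and (c) from it by the Hilbert-series bookkeeping of Proposition~\ref{jul5-11}. I work in $S=K[t_1,\dots,t_s]$, writing $I(X')$ also for its extension to $S$, and set $I'=I(X')+(t_1^{q-1}-t_s^{q-1})$; exactly as in Theorem~\ref{sunday-jun19-11}, one has $I'=I(X')+(t_i^{q-1}-t_s^{q-1})$ for every $1\le i\le s-1$, since $t_i^{q-1}-t_j^{q-1}\in I(X')$ whenever $i,j\le s-1$. The inclusion $I'\subseteq I(X)$ is the easy half: every coordinate of a point of $X$ lies in $K^*$, so each $t_i^{q-1}-t_s^{q-1}$ vanishes on $X$; moreover the projection $[(z_1,\dots,z_s)]\mapsto[(z_1,\dots,z_{s-1})]$ is defined on $X$ (all $z_i\in K^*$) and carries $X$ onto $X'$, whence any homogeneous $g\in I(X')$ vanishes on $X$ and $I(X')\subseteq I(X)$.

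The reverse inclusion $I(X)\subseteq I'$ is the core of the argument, and I would prove it by contradiction following the template of Theorem~\ref{sunday-jun19-11}. Since $I(X)$ is a lattice ideal \cite[Theorem~2.1]{algcodes}, if $I(X)\neq I'$ there is a binomial minimal generator $f=t^a-t^b\in I(X)\setminus I'$ of least degree (multi-index notation $t^a=t_1^{a_1}\cdots t_s^{a_s}$), with $a$ and $b$ of disjoint support. The hypothesis enters decisively here: as $y_n$ is a free vertex occurring only in $y^{v_s}$, putting $x_i=1$ for $i\neq n$ and $x_n=\beta$ (a generator of $\mathbb{F}_q^*$) yields the point $[(1,\dots,1,\beta)]\in X$, and evaluating $f$ there gives $\beta^{a_s}=\beta^{b_s}$, i.e.\ $a_s\equiv b_s\pmod{q-1}$. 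If $a_s=b_s=0$ then $f\in S'$, and the projection argument shows $f\in I(X')\subseteq I'$, a contradiction; otherwise I may assume $a_s=0$ and $b_s=r(q-1)$ with $r\ge1$. Choosing an index $i\le s-1$ with $a_i>0$ (one exists because $f$ is homogeneous and nonconstant with $a_s=0$), I add the multiple $(t^b/t_s^{q-1})(t_s^{q-1}-t_i^{q-1})\in I(X)$ to $f$, trading one factor $t_s^{q-1}$ in $t^b$ for $t_i^{q-1}$; the new binomial lies in $I(X)$, has the same degree as $f$, and is divisible by $t_i$. Cancelling $t_i$ (a non-zero-divisor modulo $I(X)$) produces $h\in I(X)$ with $\deg h=\deg f-1$, so $h\in I'$ by minimality, and tracing back gives $f\in I'$, the required contradiction; this proves (a).

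For (b) and (c) I would transcribe the exact-sequence computation of Proposition~\ref{jul5-11}, replacing $(S[u],I(Y),S,I(X),t_{s+1})$ by $(S,I(X),S',I(X'),t_s)$. Because $I(X)$ and $I(X')$ are lattice ideals, $t_s$ is a non-zero-divisor on $S/I(X)$ and $t_1^{q-1}$ is a non-zero-divisor on $S'/I(X')$; combining the two associated short exact sequences with the isomorphism $S/(t_s,I(X))\simeq S'/(t_1^{q-1},I(X'))$, which is immediate from (a) since $(t_s,I(X))=(t_s,t_1^{q-1},I(X'))$, gives $F_X(t)=(1+t+\cdots+t^{q-2})F_{X'}(t)$. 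Writing $F_X=g_X/(1-t)$ and $F_{X'}=g_{X'}/(1-t)$ with $g_X,g_{X'}\in\mathbb{Z}[t]$, this becomes $g_X(t)=(1+\cdots+t^{q-2})g_{X'}(t)$; setting $t=1$ yields $\deg S/I(X)=(q-1)\deg S'/I(X')$, which is (c), while comparing the degrees of $g_X$ and $g_{X'}$ yields ${\rm reg}\,S/I(X)={\rm reg}\,S'/I(X')+(q-2)$, which is (b).

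The step I expect to be most delicate is the degree reduction inside the reverse inclusion of (a): one must verify that the free-vertex evaluation forces $a_s\equiv b_s\pmod{q-1}$ in every case, and—more importantly—that after the substitution $t_s^{q-1}\rightsquigarrow t_i^{q-1}$ the modified binomial is genuinely divisible by $t_i$, so that its degree strictly drops. This hinges on the existence of an index $i\le s-1$ with $a_i>0$, which in turn relies on the disjoint-support normalization of the chosen minimal binomial and on the reduction to $a_s=0$; getting this normalization and case analysis exactly right is the main obstacle, whereas (b) and (c) are then formal consequences of (a).
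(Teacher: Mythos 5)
Your proposal is correct and follows essentially the same route as the paper's own proof: part (a) by choosing a least-degree binomial in $I(X)\setminus I'$, using the free vertex $y_n$ to evaluate at $[(1,\ldots,1,\beta)]$ and force $a_s\equiv b_s\pmod{q-1}$, then trading $t_s^{q-1}$ for $t_i^{q-1}$ and cancelling the non-zero-divisor $t_i$ to drop the degree; and parts (b), (c) by the same pair of short exact sequences and Hilbert-series comparison as in Proposition~\ref{jul5-11}. The only differences are cosmetic (you normalize $a_s=0$, $b_s=r(q-1)$ where the paper takes $a_s=\mu(q-1)$, $b_s=0$, i.e.\ the roles of the two monomials are swapped).
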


\begin{proof} (a) We set $I'=I(X')+(t_1^{q-1}-t_s^{q-1})$. Clearly
$I'\subset I(X)$. Recall that $I(X)$ is generated by a finite set of
binomials \cite{algcodes}. To show the inclusion $I(X)\subset I'$ we
proceed by contradiction. Pick a homogeneous binomial $g$ in $I(X)\setminus I'$ of least
possible degree, i.e., any binomial of $I(X)$ of degree less than
$\deg(g)$ belongs to $I'$. We can write 
\begin{equation*}
g=t_1^{a_1}\cdots t_s^{a_s}-t_1^{b_1}\cdots
t_s^{b_s},
\end{equation*}
with ${\rm supp}(t_1^{a_1}\cdots t_s^{a_s})\cap{\rm
supp}(t_1^{b_1}\cdots t_s^{b_s})=\emptyset$. 
If $a_s=b_s=0$, then $g\in I(X')$ which is impossible. Thus, we may
assume that $a_s>0$ and $b_s=0$. Thus, $b_i>0$ for some $1\leq i\leq s-1$. For simplicity of
notation, we assume that $i=1$. Making $x_i=1$ for $i=1,\ldots,n-1$ in
the equality 
$$
(x^{v_1})^{a_1}\cdots (x^{v_s})^{a_s}=(x^{v_1})^{b_1}\cdots
(x^{v_s})^{b_s}
$$
we get $x_n^{a_s}=1$ for any $x_n\in K^*$. In particular, if $\beta$ is
a generator of the cyclic group $(K^*,\cdot)$ and $x_n=\beta$, we get
$\beta^{a_s}=1$. Hence, we can write $a_s=\mu(q-1)$ for some integer
$\mu$. As $b_1>0$, one has the equality 
\begin{eqnarray*}
h&=&(t_1^{a_1}\cdots t_s^{a_s}-t_1^{b_1}\cdots
t_s^{b_s})+(t_1^{q-1}-t_s^{q-1})(t_s^{(\mu-1)(q-1)}t_1^{a_1}\cdots
t_{s-1}^{a_{s-1}})\nonumber\\ &=&-t_1^{b_1}\cdots
t_s^{b_s}+t_1^{q-1}t_s^{(\mu-1)(q-1)}t_1^{a_1}\cdots
t_{s-1}^{a_{s-1}}=t_1g_1
\end{eqnarray*}
for some binomial $g_1$. Notice that $h\neq 0$, otherwise $g\in I'$
which is impossible. Therefore $g_1$ is in $I(X)\setminus I'$ and has degree less
than $\deg(g)$, a contradiction to the choice of $g$.  

(b)  We set $B=S'/(I(X'),t_1^{q-1})$. By part (a), $B=S/(I(X),t_s)$.
There are exact sequences

\begin{eqnarray*}
&0\longrightarrow S/I(X)[-1]\stackrel{t_s}{\longrightarrow}
S/I(X)\longrightarrow
B\longrightarrow 0,&\ \ \ \ \ \ \ \ \ \ \ \ \ \ \ \ \ \ \ \ \ \ \ \ \\
 &\ \ \ \ \ \ \ \ \ \ \ \ \ \ \ \ \ \ \ \ \ \ \ \ 
0\longrightarrow S'/I(X')[-(q-1)]\stackrel{t_1^{q-1}}{\longrightarrow}
S'/I(X')\longrightarrow B\longrightarrow 0.&
\end{eqnarray*}
Therefore 
\begin{eqnarray*}
&(1-t)F_X(t)=F(B,t)\ \mbox{ and }\ (1-t^{q-1})F_{X'}(t)=F(B,t)& 
\end{eqnarray*}
where $F(B,t)$ is the Hilbert series of $B$. Thus,
$F_X(t)=(1+t+\cdots+t^{q-2})F_{X'}(t)$. From this equality (b)
follows (see the last part of the proof of
Proposition~\ref{jul5-11}). 
Part (c) also follows from this equality.
\end{proof}

A graph with exactly one cycle is called {\it unicyclic\/}.

\begin{corollary}\label{jul31-11} Let $G$ be a connected bipartite graph. If $G$ is
unicyclic with a cycle of length $2k$, then 
${\rm reg}\, S/I(X)=(q-2)(n-k-1)$. 
\end{corollary}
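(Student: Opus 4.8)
The plan is to prove the formula by induction on the number of vertices $n$, peeling off one leaf at a time with Theorem~\ref{june-2011--dictamina} and using the even cycle as the base case via Corollary~\ref{jun11-11-3}.

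First I would record the relevant combinatorial structure. Since $G$ is connected with exactly one cycle, its cycle rank is $1$, so $s=n$; equivalently, $G$ is obtained from its unique cycle $\mathcal{C}_{2k}$ by attaching a forest. The base case is $n=2k$, where necessarily $G=\mathcal{C}_{2k}$: this is a Hamiltonian bipartite graph on $2k$ vertices, so Corollary~\ref{jun11-11-3} gives ${\rm reg}\,S/I(X)=(k-1)(q-2)=(q-2)(n-k-1)$, as desired.

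For the inductive step, assume $n>2k$. I claim $G$ has a leaf (a vertex of degree $1$) that does not lie on $\mathcal{C}_{2k}$. Indeed, if every vertex had degree $\geq 2$, then $\sum_{y}\deg(y)=2s=2n$ would force every vertex to have degree exactly $2$, making the connected graph $G$ a single cycle and contradicting $n>2k$; since $G$ is connected with $n\geq 2$ it has no isolated vertex, so some vertex has degree exactly $1$. A vertex on $\mathcal{C}_{2k}$ has degree $\geq 2$, so this leaf is off the cycle. After relabelling I may take it to be $y_n$ and its unique incident edge to be $y^{v_s}$, so $y_n$ is a free vertex with $y_n\in{\rm supp}(y^{v_s})$ and Theorem~\ref{june-2011--dictamina} applies. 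Letting $X'$ be the toric set parameterized by $y^{v_1},\ldots,y^{v_{s-1}}$, i.e.\ the set attached to $G':=G-y_n$, I note that deleting a leaf keeps $G'$ connected and bipartite and leaves the unique cycle $\mathcal{C}_{2k}$ intact, so $G'$ is again connected bipartite unicyclic with cycle length $2k$ but with $n-1$ vertices. By the induction hypothesis ${\rm reg}\,S'/I(X')=(q-2)((n-1)-k-1)$, and Theorem~\ref{june-2011--dictamina}(b) yields
$$
{\rm reg}\,S/I(X)={\rm reg}\,S'/I(X')+(q-2)=(q-2)(n-k-2)+(q-2)=(q-2)(n-k-1),
$$
completing the induction.

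The computation itself is routine once the setup is in place; the only point requiring care—the main obstacle—is the combinatorial bookkeeping ensuring a free vertex exists at every stage and that deleting it preserves connectivity, bipartiteness, and the single cycle of length $2k$. This is exactly what allows the hypotheses of Theorem~\ref{june-2011--dictamina} to be reapplied all the way down to the base case.
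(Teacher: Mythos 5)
Your proof is correct and is essentially the same as the paper's: induction on $n$ with the cycle as base case (via Corollary~\ref{jun11-11-3}), peeling off a degree-one vertex with Theorem~\ref{june-2011--dictamina}(b) in the inductive step. You merely spell out details the paper leaves implicit, namely the degree-count argument for the existence of a leaf off the cycle and the check that deleting it preserves connectivity, bipartiteness, and the unique cycle.
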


\begin{proof} We proceed by induction on $n$. Notice that $s=n$, i.e., the number of
edges of $G$ is equal to the number of vertices of $G$. This follows
using that $G$ is connected and unicyclic. 
If $G$ is a
cycle, then $s=2k$ and the result follows from
Corollary~\ref{jun11-11-3}. If $G$ is not a cycle, then $G$ has a
vertex $y_i$ of degree $1$. We may assume that $y^{v_s}$ is the only
monomial that contains $y_i$. Hence, by Theorem~\ref{june-2011--dictamina}, we get 
$$
{\rm reg}\, S/I(X)={\rm reg}\, S'/I(X')+(q-2), 
$$
where $X'$ is parameterized by $y^{v_1},\ldots,y^{v_{s-1}}$ and 
$S'=K[t_1,\ldots,t_{s-1}]$. To complete the proof notice that by
induction hypothesis we have ${\rm reg}\, S'/I(X')=(n-2-k)(q-2)$.
\end{proof}

A graph ${G}$ is {\it chordal\/} if every
cycle of ${G}$ of length $n\geq 4$ has a chord. A {\it chord} of a
cycle is an edge joining two non adjacent vertices of the cycle. If
$v$ is a vertex of a graph $G$, then
its {\it neighbor set},  
denoted by $N_G(v)$, is
the set of vertices of $G$ adjacent to $v$. Let $U$ be a set of
vertices of $G$.  The {\it induced
subgraph\/} $G[U]$ is the maximal subgraph of $G$ with
vertex set $U$.  

\begin{lemma}{\rm\cite[Theorem~8.3]{Toft}}\label{toft-lemma}
Let $G$ be a chordal graph and let ${\mathcal K}$ be a complete 
subgraph of $G$. If ${\mathcal K}\neq G$, then there is $v\not\in
V_{\mathcal K}$ 
such that $G[N_G(v)]$ is a complete subgraph.
\end{lemma}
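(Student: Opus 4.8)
The plan is to deduce this from the stronger classical fact, due to Dirac, that a non-complete chordal graph possesses at least two non-adjacent \emph{simplicial} vertices, where a vertex $v$ is called simplicial when $G[N_G(v)]$ is complete. Granting that, the lemma follows at once. If $G$ itself is complete, then $\mathcal{K}\neq G$ forces $V_{\mathcal K}\subsetneq V_G$, and any $v\notin V_{\mathcal K}$ is simplicial because every vertex of a complete graph is. If $G$ is not complete, Dirac's theorem supplies two non-adjacent simplicial vertices $u,w$; since a complete subgraph cannot contain two non-adjacent vertices, at least one of $u,w$ lies outside $V_{\mathcal K}$, and that vertex is the desired $v$.

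It remains to establish Dirac's theorem, which I would prove by induction on $|V_G|$. If $G$ is complete the claim is immediate, so suppose $G$ is not complete and fix two non-adjacent vertices $a,b$. Let $S$ be a minimal vertex set separating $a$ from $b$, and let $C_a,C_b$ be the connected components of $G-S$ (the graph obtained by deleting the vertices of $S$) containing $a$ and $b$ respectively. The induced subgraphs $G_a=G[C_a\cup S]$ and $G_b=G[C_b\cup S]$ are again chordal. I claim $G_a$ has a simplicial vertex lying in $C_a$: if $G_a$ is complete, any vertex of $C_a$ works; otherwise the induction hypothesis gives two non-adjacent simplicial vertices of $G_a$, and since (as shown below) $S$ is a clique, at most one of them lies in $S$, so one lies in $C_a$. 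A vertex $v\in C_a$ that is simplicial in $G_a$ is simplicial in $G$, because $S$ separates $C_a$ from the rest of $G$ and hence $N_G(v)\subseteq C_a\cup S$. The symmetric argument produces a simplicial vertex of $G$ in $C_b$. These two vertices lie in different components of $G-S$ and are therefore non-adjacent, closing the induction.

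The crux of the argument, and the step I expect to be the main obstacle, is the assertion that a minimal $a$-$b$ separator $S$ induces a complete subgraph. To prove it, I would first observe that minimality forces every vertex of $S$ to have a neighbor in both $C_a$ and $C_b$, since otherwise a proper subset of $S$ would already separate $a$ from $b$. Given $x,y\in S$, I then choose a shortest path $P_a$ from $x$ to $y$ whose interior lies in $C_a$, and likewise a shortest path $P_b$ through $C_b$; their union is a cycle of length at least $4$. Each of $P_a,P_b$ is an induced path, and no edge joins $C_a$ to $C_b$, so the only pair of non-consecutive vertices on this cycle that can possibly be adjacent is $\{x,y\}$. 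The chord guaranteed by chordality is therefore the edge $xy$, proving that $S$ is a clique. With this clique-separator property in hand the induction of the previous paragraph goes through, and the lemma follows.
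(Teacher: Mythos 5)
Your argument is correct, but note that the paper itself offers no proof of this lemma: it is stated as a quotation of Theorem~8.3 of Toft's handbook chapter, so the paper's ``proof'' is a black-box citation to the literature. What you have supplied is precisely the classical chordal-graph theory underlying that citation. Your route --- reduce the lemma to Dirac's theorem that a non-complete chordal graph has two non-adjacent simplicial vertices, then prove Dirac's theorem by induction on $|V_G|$ after showing that a minimal $a$-$b$ separator induces a clique --- is the standard argument, and every step checks out: the reduction is sound (a complete subgraph cannot contain two non-adjacent vertices, and if $G$ is complete then $\mathcal{K}\neq G$ forces $V_{\mathcal{K}}\subsetneq V_G$); the subgraph $G_a=G[C_a\cup S]$ is chordal and has strictly fewer vertices, so the induction applies; a vertex $v\in C_a$ that is simplicial in $G_a$ is simplicial in $G$ because $N_G(v)\subseteq C_a\cup S$; and the separator-is-a-clique step, which you rightly flag as the crux, is handled correctly by the two-path argument, with no circularity since it uses only chordality and not the induction hypothesis. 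One cosmetic slip: when forming the cycle through $C_a$ and $C_b$ you should either suppose $x,y$ non-adjacent (and derive the contradiction that the forced chord is $xy$) or insist that the two paths have nonempty interiors, since otherwise ``shortest path from $x$ to $y$ whose interior lies in $C_a$'' could degenerate to the single edge $xy$ and the union would not be a cycle of length at least $4$; likewise ``induced path'' should be read as ``no chords except possibly $xy$.'' As for what each approach buys: the paper's citation keeps the exposition short and defers a well-known fact to the literature, while your proof makes the lemma self-contained at the cost of a page of classical graph theory; mathematically the two rest on exactly the same theorem of Dirac.
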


Let $G$ be a graph. A {\it clique\/} of $G$ is a set of mutually adjacent 
vertices. The {\it clique clutter\/} of $G$, denoted
by ${\rm cl}(G)$, is the clutter on $V_G$ whose edges are the 
maximal cliques of $G$ (maximal with respect to inclusion). 
Given $v\in V_G$, by $G \setminus\{v\}$, we mean
the graph formed from $G$ by deleting $v$, and all
edges incident to $v$.   

\begin{corollary} Let $\mathcal{C}={\rm cl}(G)$ be the clique clutter of a chordal
graph $G$. If $\mathcal{C}$ has $s$ edges, 
then ${\rm reg}\, S/I(X)=(q-2)(s-1)$ and ${\rm deg}\,
S/I(X)=(q-1)^{s-1}$.
\end{corollary}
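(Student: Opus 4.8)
The plan is to argue by induction on the number $s$ of edges of $\mathcal{C}={\rm cl}(G)$, that is, on the number of maximal cliques $Q_1,\dots,Q_s$ of $G$, peeling off one maximal clique at a time by means of Theorem~\ref{june-2011--dictamina}. Note that the claimed values are exactly the regularity and degree of a projective torus in $\mathbb{P}^{s-1}$ (Proposition~\ref{ci-summary}(c)); in effect the corollary asserts that $X$ is such a torus, but the induction will produce both invariants directly. For the base case $s=1$ the graph $G$ is complete, $\mathcal{C}$ has the single edge $V_\mathcal{C}$, and $X$ is parameterized by the single monomial $y_1\cdots y_n$; hence $X=\{[1]\}\subset\mathbb{P}^0$, so ${\rm deg}\,S/I(X)=|X|=1=(q-1)^0$ and ${\rm reg}\,S/I(X)=0=(q-2)\cdot 0$.

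Now let $s\geq 2$. A vertex lies in exactly one edge of ${\rm cl}(G)$ precisely when it is a simplicial vertex of $G$, so a free vertex of $\mathcal{C}$ in the sense of Theorem~\ref{june-2011--dictamina} is the same as a simplicial vertex of $G$; Lemma~\ref{toft-lemma} guarantees that such vertices exist. Using the clique-tree (running-intersection) structure of the chordal graph $G$, I would select a maximal clique $Q$ that is a \emph{leaf} of a clique tree of $G$, together with the nonempty set $P\subseteq Q$ of vertices private to $Q$, i.e.\ belonging to no other maximal clique; every vertex of $P$ is simplicial. After relabeling we may take $Q=Q_s$ and pick $y_n\in P$, so that $y_n$ is a free vertex of $\mathcal{C}$ with $y_n\in{\rm supp}(y^{v_s})$. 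Then Theorem~\ref{june-2011--dictamina} applies and, with $X'$ denoting the toric set parameterized by $y^{v_1},\dots,y^{v_{s-1}}$, gives
$$
{\rm reg}\,S/I(X)={\rm reg}\,S'/I(X')+(q-2)\quad\text{and}\quad{\rm deg}\,S/I(X)=(q-1)\,{\rm deg}\,S'/I(X').
$$

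The key step is to recognize $X'$ as another instance of the corollary with one fewer edge. Since $Q$ is a leaf of the clique tree, deleting \emph{all} the private vertices $P$ yields the induced subgraph $G\setminus P$, which is again chordal and whose maximal cliques are precisely $Q_1,\dots,Q_{s-1}$; thus ${\rm cl}(G\setminus P)$ has $s-1$ edges. The vertices of $P\setminus\{y_n\}$ appear in none of $y^{v_1},\dots,y^{v_{s-1}}$ and so are irrelevant to the parameterization, whence $X'$ is exactly the projective algebraic toric set parameterized by the edges of ${\rm cl}(G\setminus P)$. The induction hypothesis now gives ${\rm reg}\,S'/I(X')=(q-2)(s-2)$ and ${\rm deg}\,S'/I(X')=(q-1)^{s-2}$, and substituting these into the two displayed identities completes the induction.

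I expect the main obstacle to be precisely this clutter bookkeeping, together with the reason one must peel a \emph{leaf} clique rather than an arbitrary maximal clique carrying a private vertex. If $Q$ has a private vertex but is an interior node of the clique tree, then the remaining family $Q_1,\dots,Q_{s-1}$ need not be the clique clutter of any graph: a set of vertices of $Q$ that is covered pairwise, but not jointly, by the other maximal cliques becomes a clique---hence a spurious maximal clique---of the graph whose edges are the pairs contained in some $Q_j$ with $j<s$, so that the reduced family would leave the class of clique clutters of chordal graphs. Restricting the reduction to a leaf clique, where $G\setminus P$ is a bona fide induced subgraph, is exactly what keeps us inside this class at every step; this is the only delicate point, since the algebraic content of the reduction is furnished verbatim by Theorem~\ref{june-2011--dictamina}.
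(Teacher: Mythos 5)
Your proof is correct, and its skeleton --- induction on the number of maximal cliques, the identification of free vertices of ${\rm cl}(G)$ with simplicial vertices of $G$, and peeling off one clique via Theorem~\ref{june-2011--dictamina} --- is exactly the paper's. Where you differ is the selection device, and the difference is substantive: the paper's own proof is a two-line sketch that takes an \emph{arbitrary} free vertex $y_n$ (supplied by Lemma~\ref{toft-lemma}), notes that $G\setminus\{y_n\}$ is chordal, and says the result follows by induction; it never checks that the remaining cliques $Q_1,\dots,Q_{s-1}$ are the edges of ${\rm cl}(G\setminus\{y_n\})$, which is precisely the point you flag. Your concern is not hypothetical: take $G$ chordal with maximal cliques $\{a,b,c,v\}$, $\{a,b,x\}$, $\{b,c,y\}$, $\{a,c,z\}$ (here $v,x,y,z$ are all simplicial) and choose $y_n=v$; then $\{a,b,c\}$ survives as a maximal clique of $G\setminus\{v\}$, so ${\rm cl}(G\setminus\{v\})$ still has four edges, and the three-element family $\{a,b,x\},\{b,c,y\},\{a,c,z\}$ is the clique clutter of no graph whatsoever, since the graph it generates acquires the spurious maximal clique $\{a,b,c\}$ --- so the inductive hypothesis cannot be applied to $X'$. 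Thus the paper's induction, read literally, can break at this step, while your restriction to a leaf clique of a clique tree --- where $Q\setminus P$ is contained in the neighboring clique, so that $Q_1,\dots,Q_{s-1}$ really are the maximal cliques of the chordal graph $G\setminus P$ --- repairs it. What your route buys is a complete argument; what it costs is importing the classical (but external to the paper) existence of clique trees with the running-intersection property, whereas the paper stays inside its own toolkit at the price of the gap just described.
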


\begin{proof} From Lemma~\ref{toft-lemma}, the clique clutter of $G$
has a free vertex. We denote this vertex by $y_n$. Using that
$G\setminus\{y_n\}$ is a chordal graph, together with
Theorem~\ref{june-2011--dictamina}, the result follows by induction 
on the number of edges of ${\rm cl}(G)$. 
\end{proof}

\begin{corollary}\label{jul5-11-1} If $G$ is a connected bipartite
graph with a largest cycle of length $2k$, then 
$$
{\rm reg}\, S/I(X)\leq (q-2)(n-k-1).
$$
\end{corollary}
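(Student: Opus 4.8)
The plan is to bound $\mathrm{reg}\, S/I(X)$ from above by the regularity of a conveniently chosen spanning subgraph of $G$ whose value is already known, transferring the inequality through the comparison Lemma~\ref{maria-project}.

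\emph{Step 1 (construction of the subgraph).} First I would produce a spanning, connected, unicyclic subgraph $H$ of $G$ whose unique cycle has length $2k$. Fix a largest cycle $C$ of $G$; by hypothesis it has length $2k$. Contract $C$ to a single vertex to obtain a connected graph $G/C$, choose a spanning tree of $G/C$, and lift its edges back to $G$. Let $H$ be the union of $C$ with these lifted edges. Then $H$ is spanning and connected (it contains $C$, and the lifted tree reaches every vertex of $G$), it has no cycle other than $C$ (the lifted edges form a forest hanging off $C$), and it is bipartite, being a subgraph of the bipartite graph $G$. Thus $H$ is a connected bipartite unicyclic graph on the same $n$ vertices as $G$, with cycle length $2k$; note also that $H$ has exactly $n$ edges, so it has no more edges than $G$.

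\emph{Step 2 (regularity of $H$).} Since $H$ meets the hypotheses of Corollary~\ref{jul31-11}, the toric set $X_H$ parameterized by the edges of $H$ satisfies $\mathrm{reg}(S_H/I(X_H)) = (q-2)(n-k-1)$, where $S_H$ is the polynomial ring whose variables are indexed by the edges of $H$.

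\emph{Step 3 (comparison).} Because $H$ is a spanning subgraph of $G$, its edge set is contained in that of $G$, so the monomials parameterizing $X_H$ form a subset of those parameterizing $X$. Moreover $G$ and $H$ are both connected bipartite graphs on $n$ vertices, so $|X| = |X_H| = (q-1)^{n-2}$ by \cite{algcodes} (compare Corollary~\ref{maria-cafeteria-cinvestav-1}). Applying Lemma~\ref{maria-project} with $X_H$ in the role of the smaller parameterized set and $X$ in the role of the larger one gives $\mathrm{reg}\, S/I(X) \le \mathrm{reg}(S_H/I(X_H)) = (q-2)(n-k-1)$, as claimed. The only step requiring care is Step~1, the existence of a spanning connected unicyclic subgraph carrying a \emph{largest} cycle of $G$; the contraction argument above settles it, and the remaining steps are direct appeals to the quoted results.
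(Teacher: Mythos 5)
Your proposal is correct and follows essentially the same route as the paper: build a spanning connected unicyclic subgraph $H$ of $G$ whose unique cycle is a largest cycle, compute ${\rm reg}(S_H/I(X_H))=(q-2)(n-k-1)$ via Corollary~\ref{jul31-11}, and transfer the bound to $G$ via Lemma~\ref{maria-project} using $|X|=|X_H|=(q-1)^{n-2}$. The only difference is cosmetic: where the paper asserts the existence of $H$ by an induction on the number of vertices, you construct it explicitly by contracting the cycle and lifting a spanning tree, which is a valid way to settle the same combinatorial step.
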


\begin{proof} Let $C$ be a cycle of $G$ of length $2k$. It is not
hard to see, by induction on the number of vertices, that $G$ has a
unicyclic connected subgraph $H$ with $V_G=V_H$ and whose only cycle is
$C$. Let $X'$ be the set parameterized by the edges of $H$. We set
$S'=K[t_1,\ldots,t_n]$, where $t_1,\ldots,t_n$ are the variables that
correspond to the monomials defining the edges of $H$. By
Corollary~\ref{jul31-11}, ${\rm reg}(S'/I(X'))$ is equal to 
$(q-2)(n-k-1)$. Notice that $|X|=|X'|=(q-1)^{n-2}$ because $H$ and $G$
are both connected bipartite graphs with $n$ vertices (see
\cite[Corollary~3.8]{algcodes}). 
Thus, by 
Lemma~\ref{maria-project}, ${\rm reg}(S/I(X))\leq {\rm
reg}(S'/I(X'))$. This proves the required inequality. 
\end{proof}

\begin{example} Let $K=\mathbb{F}_3$ be the field with $3$ elements and let
$X$ be the projective algebraic toric set parameterized by the monomials:
$$
x_1x_6,\, 
x_1x_2,\, 
x_1x_8,\, 
x_3x_2,\, 
x_3x_4,\, 
x_5x_6,\, 
x_5x_4,\,
x_5x_8,\, 
x_7x_2,\, 
x_7x_4. 
$$
The graph $G$, whose edges correspond to these monomials, is connected
and bipartite with bipartition $V_1=\{x_1,x_3,x_5,x_7\}$,
$V_2=\{x_6,x_2,x_4,x_8\}$. All vertices of this graph have degree at
least two. The largest cycle of $G$ has length $6$. Thus, by a direct application
of Corollary~\ref{jul5-11-1} and
Theorem~\ref{upper-lower-bounds-reg-bip}, we get $3\leq {\rm
reg}(S/I(X))\leq 4$. Using {\em Macaulay\/}$2$ \cite{mac2} it is seen
that the regularity of $S/I(X)$ is equal to $4$. 
\end{example}

\section{Applications to coding theory}\label{applications}

We continue to use the notation and definitions used in
Sections~\ref{intro-rs-codes} and \ref{degree-and-reg}. In this
section we recall the well known interconnections between 
the algebraic invariants of vanishing ideals and the basic parameters of affine and
projective parameterized linear codes. Then we present upper and lower bounds for
the minimum distance of parameterized codes arising from connected
bipartite graphs. 

Some families of evaluation codes have been studied extensively using
commutative algebra methods and especially Hilbert functions, 
see \cite{delsarte-goethals-macwilliams,
duursma-renteria-tapia,gold-little-schenck,GRT,
algcodes,sorensen}. In this
section we use these methods to study parameterized codes over 
finite fields. 

Let $S=K[t_1,\ldots,t_s]=\oplus_{d=0}^\infty S_d$ 
be a polynomial ring 
over the field $K$ with the standard grading, let $Q_1,\ldots,Q_r$
be the points of $X^*$, and let $S_{\leq d}$ be the set of polynomials of $S$ of
degree at most $d$. 
\begin{definition} The {\it evaluation map\/} ${\rm ev}_d\colon
S_{\leq d}\rightarrow K^{|X^*|}$, $f\mapsto
\left(f(Q_1),\ldots,f(Q_r)\right)$, 
defines a $K$-linear map. The image of ${\rm ev}_d$, denoted by $C_{X^*}(d)$,
is a {\it linear code} which is called a {\it parameterized affine code\/} of
degree $d$ on $X^*$, by a {\it linear code\/} we mean a linear subspace of
$K^{|X^*|}$. 
\end{definition}

Parameterized affine codes are special types of
Reed-Muller codes (in the sense of \cite[p.~37]{tsfasman}) and of evaluation 
codes
\cite{duursma-renteria-tapia,gold-little-schenck,GRT,lachaud}. 
If $s=n=1$ and $v_1=1$, then
$X^*=\mathbb{F}_q^*$ and we obtain the classical Reed-Solomon code of
degree $d$ \cite[p.~33]{tsfasman}.    

The {\it dimension\/} and the {\it length\/} of $C_{X^*}(d)$ 
are given by $\dim_K C_{X^*}(d)$ and $|{X^*}|$ respectively. The dimension
and the length 
are two of the {\it basic parameters} of a linear code. A third
basic parameter is the {\it minimum
distance\/} which is given by 
$$\delta_{X^*}(d)=\min\{\|v\|
\colon 0\neq v\in C_{X^*}(d)\},$$ 
where $\|v\|$ is the number of non-zero
entries of $v$. The basic parameters of $C_{X^*}(d)$ are related by the
{\it Singleton bound\/} for the minimum distance:
$$
\delta_{X^*}(d)\leq |{X^*}|-\dim_KC_{X^*}(d)+1.
$$

Two of the parameters of $C_{X^*}(d)$ can be expressed 
using Hilbert functions of standard graded algebras as is seen below.

\begin{definition}\label{def-param-proj-code}
The evaluation map
$$
{\rm ev}'_d\colon S[u]_d\rightarrow K^{|Y|},\ \ \ \ \ 
f\mapsto
\left(\frac{f(Q_1,1)}{f_0(Q_1,1)},\ldots,\frac{f(Q_r,1)}{f_0(Q_r,1)}\right),
$$
where $f_0(t_1,\ldots,t_{s+1})=t_1^d$, defines a linear map of 
$K$-vector spaces. The image of ${\rm ev}'_d$, denoted by $C_Y(d)$, is
called a {\it parameterized
projective code\/} of
degree $d$ on the set $Y$. The minimum distance of $C_Y(d)$ is denoted
by $\delta_Y(d)$.
\end{definition}

\begin{definition}
The {\it affine Hilbert function\/} of $S/I(X^*)$
is given by 
$$
H_{X^*}(d):=\dim_K\, S_{\leq d}/I(X^*)_{\leq d}
$$
where $I(X^*)_{\leq d}=S_{\leq d}\cap I(X^*)$. 
\end{definition}

This paper is motivated by the fact that the degree and the Hilbert
function of $S[u]/I(Y)$ are related to the basic parameters of 
parameterized affine linear codes: 

\begin{theorem}{\cite[Theorem~2.4]{affine-codes}}\label{bridge-affine-projective}
{\rm(a)} $C_{X^*}(d)\simeq C_Y(d)$ as $K$-vector spaces. 

{\rm(b)} The parameterized codes $C_{X^*}(d)$ and $C_Y(d)$ have the
same parameters. 

{\rm(c)} The dimension and the length of $C_{X^*}(d)$ are 
$H_Y(d)$ and ${\rm deg}(S[u]/I(Y))$ respectively.

{\rm(d)} \cite[Remark~2.5]{affine-codes}
$H_Y(d)=H_{X^*}(d)$ for $d\geq 0$ $($cf.
\cite[Remark~5.3.16]{singular}$)$.  
\end{theorem}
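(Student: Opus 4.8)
The plan is to realize both codes as images of evaluation maps that are intertwined by the homogenization isomorphism between $S_{\leq d}$ and $S[u]_d$. First I would record the elementary but crucial observation that the map $X^*\to Y$, $Q\mapsto[(Q,1)]$, is a bijection: if $[(Q,1)]=[(Q',1)]$, the scalar relating the two representatives must fix the last coordinate, forcing $Q=Q'$. Hence $|X^*|=|Y|=r$ and the points of $Y$ are exactly $[(Q_1,1)],\ldots,[(Q_r,1)]$. I would also note that every coordinate of a point $Q_i\in X^*$ is a product of units of $K$, hence nonzero; in particular $f_0(Q_i,1)=(Q_i)_1^{d}\neq 0$, so ${\rm ev}'_d$ is well defined on $Y$ and the diagonal matrix $D={\rm diag}((Q_1)_1^{-d},\ldots,(Q_r)_1^{-d})$ is invertible. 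Next I would set up the two $K$-linear maps $\alpha\colon S_{\leq d}\to S[u]_d$, $f\mapsto u^{d}f(t_1/u,\ldots,t_s/u)$, and $\beta\colon S[u]_d\to S_{\leq d}$, $F\mapsto F(t_1,\ldots,t_s,1)$. The identity $\beta\alpha={\rm id}$ is immediate, and $\alpha\beta={\rm id}$ follows from the degree-$d$ homogeneity of $F$ (rewrite $F(t/u,1)=u^{-d}F(t,u)$), so $\alpha$ is an isomorphism $S_{\leq d}\simeq S[u]_d$.

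For parts (a) and (b) I would verify the intertwining relation ${\rm ev}'_d\circ\alpha=D\circ{\rm ev}_d$. For $f\in S_{\leq d}$ one has $\alpha(f)(Q_i,1)=f(Q_i)$, so the $i$th coordinate of ${\rm ev}'_d(\alpha(f))$ is $f(Q_i)/(Q_i)_1^{d}$, which is the $i$th coordinate of $D\,{\rm ev}_d(f)$. Since $\alpha$ is onto $S[u]_d$, this gives $C_Y(d)=D\cdot C_{X^*}(d)$; as $D$ is an invertible diagonal (monomial) matrix it is a weight-preserving linear automorphism of $K^{r}$, which yields the vector-space isomorphism of (a) and the equality of length, dimension and minimum distance claimed in (b).

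For part (c) I would compute $\ker({\rm ev}'_d)$: since $f_0(Q_i,1)\neq 0$, the map $F\mapsto(F(Q_i,1))_i$ has the same kernel as ${\rm ev}'_d$, namely the homogeneous degree-$d$ forms vanishing at every point of $Y$, i.e. $I(Y)_d$. Hence $\dim_K C_Y(d)=\dim_K S[u]_d-\dim_K I(Y)_d=H_Y(d)$, and combined with (a) the dimension of $C_{X^*}(d)$ is $H_Y(d)$, while its length is $|X^*|=|Y|=\deg(S[u]/I(Y))$ by the discussion of the degree recalled just before Proposition~\ref{maria-cafeteria-cinvestav-gen}. For part (d) I would show that $\alpha$ and $\beta$ carry $I(X^*)_{\leq d}$ and $I(Y)_d$ onto one another: if $f$ vanishes on $X^*$ then $\alpha(f)(Q_i,1)=f(Q_i)=0$, so $\alpha(f)\in I(Y)_d$, and conversely $\beta(F)=F(t,1)$ vanishes on $X^*$ whenever $F\in I(Y)_d$. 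Being inverse bijections matching these subspaces, $\alpha$ and $\beta$ induce an isomorphism $S_{\leq d}/I(X^*)_{\leq d}\simeq (S[u]/I(Y))_d$, whence $H_{X^*}(d)=H_Y(d)$.

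The only genuinely delicate point is the verification that $\alpha$ and $\beta$ are inverse to each other on the full spaces: the identity $\alpha\beta={\rm id}$ uses the degree-$d$ homogeneity in an essential way and would fail for forms of the wrong degree. Alongside this, the key bookkeeping is ensuring that $f_0=t_1^{d}$ never vanishes on $Y$, which is exactly what makes the rescaling $D$ invertible and weight-preserving; once these are in place, the remaining content of all four parts is a routine dimension count together with the observation that a diagonal change of coordinates preserves Hamming weight.
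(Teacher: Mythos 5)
Your proof is correct, but there is nothing in the paper to compare it against: Theorem~\ref{bridge-affine-projective} is quoted from \cite[Theorem~2.4]{affine-codes} and the paper supplies no argument of its own. What you have written is a complete, self-contained proof along the standard homogenization--dehomogenization route (which is essentially the argument of the cited reference): the bijection $Q\mapsto[(Q,1)]$ between $X^*$ and $Y$, the mutually inverse maps $\alpha\colon S_{\leq d}\to S[u]_d$ and $\beta\colon F\mapsto F(t_1,\ldots,t_s,1)$, the intertwining identity ${\rm ev}'_d\circ\alpha=D\circ{\rm ev}_d$ with $D$ an invertible diagonal matrix, and the identifications $\ker({\rm ev}'_d)=I(Y)_d$ and $\alpha\bigl(I(X^*)_{\leq d}\bigr)=I(Y)_d$ together yield all four parts. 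You also flag exactly the right delicate points: $\alpha\beta={\rm id}$ requires homogeneity of degree exactly $d$, and the nonvanishing of $f_0=t_1^d$ at every point of $Y$ (each coordinate of a point of $X^*$ is a product of elements of $K^*$) is what makes ${\rm ev}'_d$ well defined, $D$ invertible, and the coordinate rescaling Hamming-weight-preserving, so that the minimum distance claim in (b) follows and not just the dimension count.

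One step worth making explicit, since the paper defines $I(Y)$ as the ideal \emph{generated by} the homogeneous polynomials vanishing on $Y$: you use that $I(Y)_d$ equals the set of all degree-$d$ forms vanishing at every $(Q_i,1)$. This is immediate (any $K[t_1,\ldots,t_{s+1}]$-combination of homogeneous vanishing polynomials vanishes at every representative of every point of $Y$, and conversely a vanishing form of degree $d$ is itself a generator), but it is the hinge on which both (c) and (d) turn, so a sentence of justification would make the argument airtight.
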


\begin{lemma}\label{comparing-md} Let $X\subset\mathbb{P}^{s-1}$ and
$X'\subset\mathbb{P}^{s'-1}$ be algebraic toric sets  parameterized by
$y^{v_1},\ldots,y^{v_s}$  and $y^{v_1},\ldots,y^{v_{s'}}$  respectively.
If $s\leq s'$ and $|X|=|X'|$, then $\delta_{X'}(d)\leq \delta_X(d)$. 
\end{lemma}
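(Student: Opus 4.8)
The plan is to realize the minimum distance combinatorially and then transport a minimal-weight codeword of $C_X(d)$ to $C_{X'}(d)$ without increasing its weight. Writing $X=\{[P_1],\ldots,[P_m]\}$, where each $P_i$ has all coordinates in $K^*$ (they are products of elements of $K^*$), a homogeneous $f\in S_d$ determines a codeword of $C_X(d)$ whose weight equals the number of points of $X$ at which $f$ does not vanish; here the fixed form $f_0=t_1^d$ used in Definition~\ref{def-param-proj-code} never vanishes on $X$, so the normalization does not affect which coordinates are zero. Thus $\delta_X(d)$ is the minimum, over all $f\in S_d\setminus I(X)_d$, of $|X|$ minus the number of zeros of $f$ on $X$, and similarly for $\delta_{X'}(d)$. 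The inclusion $S=K[t_1,\ldots,t_s]\subset S'=K[t_1,\ldots,t_{s'}]$ lets me view any such $f$ as an element of $S'_d$.

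Next I would set up the projection $\pi\colon X'\to X$, $[(z_1,\ldots,z_{s'})]\mapsto[(z_1,\ldots,z_s)]$. Because $X$ and $X'$ are parameterized by the same first $s$ monomials $y^{v_1},\ldots,y^{v_s}$ and all coordinates of points in $X$ and $X'$ lie in $K^*$, the map $\pi$ is well defined and surjective. The key point — and, I expect, the only substantive step — is that $\pi$ is in fact a \emph{bijection}: this is forced by the two hypotheses $s\le s'$ (which makes $\pi$ defined) and $|X|=|X'|$, exactly as the equality $|X|=|X'|$ was the crux of Lemma~\ref{maria-project}. A point $[P']\in X'$ and its image $[P]=\pi([P'])$ share the same first $s$ homogeneous coordinates, so for $f\in S_d$ viewed in $S'_d$ one has $f(P')=f(P)$ on matching representatives; in particular $f$ vanishes at $[P']$ if and only if it vanishes at $[P]$, and the normalizing values $f_0(P')=f_0(P)=(P_1)^d$ agree as well.

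Finally I would choose $f\in S_d\setminus I(X)_d$ realizing $\delta_X(d)$, i.e.\ of minimal nonzero weight in $C_X(d)$, and regard it as $\tilde f\in S'_d$. Since $f$ is nonzero at some point of $X$, the bijection $\pi$ produces a point of $X'$ at which $\tilde f$ is nonzero, so $\tilde f\notin I(X')_d$ and hence defines a genuine nonzero codeword of $C_{X'}(d)$. Because $\pi$ matches coordinate positions and the entries agree termwise, this codeword has weight exactly $\delta_X(d)$. As $\delta_{X'}(d)$ is a minimum taken over a (generally larger) set of nonzero codewords, I conclude $\delta_{X'}(d)\le\delta_X(d)$, as required. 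The only place where care is needed is the bijectivity of $\pi$, since it is precisely this — and not mere surjectivity — that lets the lift of $f$ preserve the number of zeros; everything else is bookkeeping about the projective normalization.
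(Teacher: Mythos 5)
Your proof is correct and takes essentially the same approach as the paper: both set up the coordinate-projection $X'\to X$, observe that the hypothesis $|X|=|X'|$ upgrades this surjection to a bijection (the paper phrases this as an epimorphism of finite multiplicative groups, hence an isomorphism), and then transport a minimum-weight codeword $F\in S_d$ to $S'_d$, where the bijection guarantees the weight is preserved exactly. Your explicit remarks on the normalization by $f_0=t_1^d$ and on matching representatives are just careful bookkeeping of points the paper leaves implicit.
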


\begin{proof} We can choose $P_1,\ldots,P_m$
in $X^*$ so that $X=\{[P_1],\ldots,[P_m]\}$. There is a well defined epimorphism 
$$
\phi\colon X'\rightarrow X,\ \ \ \ \ \ \ \
[(x^{v_1},\ldots,x^{v_{s'}})]\mapsto [(x^{v_1},\ldots,x^{v_s})], 
$$
induced by the map $[(\alpha_1,\ldots,\alpha_{s'})]\mapsto[(\alpha_1,\ldots,\alpha_s)]$. By
hypothesis $|X'|=|X|$. Hence, the map $\phi$ 
is an isomorphism of multiplicative groups. Thus,
we can write $X'=\{[P_1'],\ldots,[P_m']\}$ so that $[P_i']$ maps under
$\phi$ to $[P_i]$ for all $i$. 
Pick $F$ in $S_d=K[t_1,\ldots,t_s]_d$ such that
$\delta_X(d)=|\{P_i\vert\, F(P_i)\neq 0\}|$. Notice that $F$ is also a
polynomial in $S[t_{s+1},\ldots,t_{s'}]_d$. Since $F(P_i)\neq 0$ if and
only if $F(P_i')\neq 0$, we get $\delta_{X'}(d)\leq \delta_X(d)$. 
\end{proof}

\begin{theorem}\label{jun25-11} Let $G$ be a connected bipartite graph with
bipartition $(V_1,V_2)$. Then 
$$
\delta_{X_1}(d)\delta_{X_2}(d)\leq \delta_{X}(d)\leq \delta_{X_3}(d),\
\mbox{ for }\ d\geq 1,
$$
where $X_3$ is a projective torus in $\mathbb{P}^{|V_1|+|V_2|-2}$ and
$X_i$ is a projective torus in $\mathbb{P}^{|V_i|-1}$ for $i=1,2$.
\end{theorem}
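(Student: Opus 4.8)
The plan is to prove the two inequalities separately: the upper bound by a direct monomial comparison using Lemma~\ref{comparing-md}, and the lower bound by realizing $C_X(d)$ as a subcode of a tensor product code. Set $s_1=|V_1|$ and $s_2=|V_2|$, so that $n=s_1+s_2$ and, since $G$ is connected bipartite, $|X|=(q-1)^{s_1+s_2-2}$ by \cite[Corollary~3.8]{algcodes}.

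For the upper bound $\delta_X(d)\le\delta_{X_3}(d)$ I would reuse the spanning-tree construction from the proof of Theorem~\ref{upper-lower-bounds-reg-bip}. Choose a spanning tree $H$ of $G$; it has $s_1+s_2-1$ edges, and the toric set parameterized by its edges is precisely the projective torus $X_3$ in $\mathbb{P}^{s_1+s_2-2}$, of cardinality $(q-1)^{s_1+s_2-2}=|X|$. After reordering the edges of $G$ so that the tree edges come first, $X_3$ is parameterized by a subset of the monomials parameterizing $X$, and the number of tree edges $s_1+s_2-1$ is at most $s$. Lemma~\ref{comparing-md}, applied with the tree set playing the role of the set with fewer monomials, then yields $\delta_X(d)\le\delta_{X_3}(d)$ immediately.

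For the lower bound the structural observation is that the bipartition induces a bijection $\pi\colon X_1\times X_2\to X$. Writing $w_1,\ldots,w_{s_1}$ and $z_1,\ldots,z_{s_2}$ for the variables attached to $V_1$ and $V_2$, the edge $e=\{a_i,b_j\}$ carries the monomial $w_iz_j$, so I set $\pi([w],[z])=[(w_iz_j)_e]$. This is well defined and surjective by the very definition of $X$, and since $|X_1\times X_2|=(q-1)^{s_1-1}(q-1)^{s_2-1}=(q-1)^{s_1+s_2-2}=|X|$ it is a bijection that preserves supports (hence Hamming weight). Pulling codewords back along $\pi$, a degree-$d$ monomial $\prod_e t_e^{a_e}$ in the edge variables evaluates at $\pi([w],[z])$ to $\prod_i w_i^{r_i}\prod_j z_j^{c_j}$, where $r_i=\sum_{e\ni a_i}a_e$ and $c_j=\sum_{e\ni b_j}a_e$; because each edge meets $V_1$ and $V_2$ in exactly one vertex, $\sum_i r_i=\sum_j c_j=d$. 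Thus each monomial pulls back to a rank-one tensor $g\otimes h$ with $g\in C_{X_1}(d)$ and $h\in C_{X_2}(d)$, so under the identification $X\cong X_1\times X_2$ the code $C_X(d)$ is a \emph{subcode} of the tensor product $C_{X_1}(d)\otimes C_{X_2}(d)$.

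To finish I would invoke the well-known formula that the minimum distance of a tensor (Kronecker) product of two linear codes equals the product of their minimum distances, giving $\delta\bigl(C_{X_1}(d)\otimes C_{X_2}(d)\bigr)=\delta_{X_1}(d)\delta_{X_2}(d)$; since a subcode has minimum distance at least that of the ambient code, $\delta_X(d)\ge\delta_{X_1}(d)\delta_{X_2}(d)$. The main obstacle is this last step, and the care it requires is twofold: one must verify that the projective normalization in the definition of the codes only rescales each coordinate by a nonzero scalar and therefore does not alter supports, and one must notice that \emph{containment} in the tensor product (not equality) already suffices for the lower bound. For completeness I would remark that for the complete bipartite graph $\mathcal{K}_{s_1,s_2}$ the containment is an equality, because every prescribed pair $(r_i),(c_j)$ with $\sum_i r_i=\sum_j c_j=d$ is realized by a nonnegative integer transportation array supported on the full edge set; together with the Hilbert-function identity $H_{X'}(d)=H_{X_1}(d)H_{X_2}(d)$ of \cite{GR} and Lemma~\ref{comparing-md} applied to the spanning subgraph $G\subset\mathcal{K}_{s_1,s_2}$, this furnishes an alternative route to the same lower bound.
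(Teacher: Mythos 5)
Your proposal is correct, and it splits from the paper exactly at the lower bound. The upper bound is the paper's own argument verbatim: take a spanning tree $H$ of $G$, note its toric set is the projective torus $X_3$ (since $|X_3|=(q-1)^{s_1+s_2-2}$ forces equality with the ambient torus, $s_i=|V_i|$), and apply Lemma~\ref{comparing-md}. For the lower bound the paper instead interposes the complete bipartite graph: it cites \cite{GR} for the identity $\delta_{X'}(d)=\delta_{X_1}(d)\delta_{X_2}(d)$, where $X'$ is the toric set of $\mathcal{K}_{s_1,s_2}$, and then uses Lemma~\ref{comparing-md} once more, since $G$ is a spanning subgraph of $\mathcal{K}_{s_1,s_2}$ and $|X|=|X'|=(q-1)^{s_1+s_2-2}$. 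You bypass both the intermediary graph and the citation: the bijection $\pi\colon X_1\times X_2\to X$ is legitimate (well defined and surjective by construction, bijective by the cardinality count from \cite[Corollary~3.8]{algcodes}), the factorization of each degree-$d$ edge monomial as a degree-$d$ monomial on $X_1$ times a degree-$d$ monomial on $X_2$ is exactly where bipartiteness enters, and the containment $C_X(d)\subseteq C_{X_1}(d)\otimes C_{X_2}(d)$ plus the classical product-code formula $\delta(C_1\otimes C_2)=\delta(C_1)\delta(C_2)$ gives the bound, since a subcode has minimum distance at least that of the ambient code. Your attention to the normalization is warranted and resolves cleanly: with compatible representatives one has $f_0(Q)=(w_1z_1)^d$, so the diagonal scalings on the two factors tensor to the scaling on $X$, and weights are untouched. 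What each approach buys: the paper's proof is shorter and reuses a single lemma for both inequalities, but rests on the external result of \cite{GR}; yours is self-contained modulo a standard coding-theory fact, exposes the structural (tensor-product) reason for the inequality, and, via your transportation-array remark, actually re-proves the \cite{GR} formula for $\mathcal{K}_{s_1,s_2}$ as the case where the containment is an equality.
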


\begin{proof} We set $|V_i|=s_i$ for $i=1,2$. First we prove the 
inequality on the left. Let $X'\subset\mathbb{P}^{s_1s_2-1}$ be
the projective algebraic toric set parameterized by the edges of the complete
bipartite graph $\mathcal{K}_{s_1,s_2}$ with bipartition $(V_1,V_2)$.
According to \cite{GR} the minimum distances are related by  
$\delta_{X'}(d)=\delta_{X_1}(d)\delta_{X_2}(d)$ for $d\geq 1$. Recall
that $|X|=|X'|=(q-1)^{s_1+s_2-2}$ \cite{algcodes}. Therefore, by 
Lemma~\ref{comparing-md}, we obtain the inequality on the left. 

Let $H$ be an spanning tree of $G$, that is, $H$ is a subgraph of $G$
such that $H$ is a tree that contains every vertex of $G$. Consider
the algebraic toric set $X''$ parameterized by the edges of $H$. We
may assume that $v_1,\ldots,v_{s_1+s_2-1}$ are the characteristic
vectors of the edges of $H$. Notice that the set $X''$ is
a projective torus in $\mathbb{P}^{s_1+s_2-2}$, i.e., $X''=X_3$ (see
the proof of Theorem~\ref{upper-lower-bounds-reg-bip}). Using
\cite[Corollary~3.8]{algcodes}, we get that $|X|$ and $|X_3|$ are 
equal to $(q-1)^{s_1+s_2-2}$. Therefore, by 
Lemma~\ref{comparing-md}, we obtain the inequality on the right. 
\end{proof}

Lower bounds for
the minimum distance 
of evaluation codes have been
shown when $X$ is any complete intersection reduced set of points in
a projective space
\cite{ballico-fontanari,gold-little-schenck,hansen}, and when $X$ is a
reduced Gorenstein set of points \cite{tohaneanu}. Upper bounds for
the minimum distance of certain parameterized codes are given in
\cite{algcodes,d-compl}.

There is a nice recent formula for the minimum distance of a
parameterized code over a projective torus.

\begin{theorem}{\cite[Theorem~3.4]{ci-codes}}\label{maria-vila-hiram-eliseo} 
If $\mathbb{T}$ is a projective torus in $\mathbb{P}^{n-1}$ and $d\geq 1$, 
 then the minimum distance of $C_\mathbb{T}(d)$ is given by 
$$
\delta_{\mathbb{T}}(d)=\left\{\begin{array}{cll}
(q-1)^{n-(k+2)}(q-1-\ell)&\mbox{if}&d\leq (q-2)(n-1)-1,\\
1&\mbox{if}&d\geq (q-2)(n-1),
\end{array}
 \right.
$$
where $k$ and $\ell$ are the unique integers such that $k\geq 0$,
$1\leq \ell\leq q-2$ and $d=k(q-2)+\ell$. 
\end{theorem}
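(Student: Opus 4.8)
**The plan is to compute the minimum distance of $C_{\mathbb{T}}(d)$ by translating the combinatorial minimization over vectors in the torus into an algebraic extremal problem for products of cyclotomic-type factors.** Recall that the minimum distance is
$$
\delta_{\mathbb{T}}(d)=|\mathbb{T}|-\max\{|\{P\in\mathbb{T}\colon F(P)=0\}|\colon F\in S_d,\ F\notin I(\mathbb{T})\},
$$
so by Proposition~\ref{ci-summary}(c) we have $|\mathbb{T}|=(q-1)^{n-1}$, and the task reduces to maximizing the number of projective zeros of a form $F$ of degree $d$ that does not vanish identically on $\mathbb{T}$. First I would dehomogenize: since $\mathbb{T}$ consists of points with all coordinates nonzero, I can normalize the last coordinate to $1$ and study the values $F(x_1,\ldots,x_{n-1},1)$ as $(x_1,\ldots,x_{n-1})$ ranges over $(\mathbb{F}_q^*)^{n-1}$, reducing each $x_i$ modulo the relation $x_i^{q-1}=1$ that holds on $\mathbb{F}_q^*$. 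This replaces $F$ by a polynomial whose partial degree in each variable is at most $q-2$, i.e.\ an element of the quotient ring $\mathbb{F}_q[x_1,\ldots,x_{n-1}]/(x_i^{q-1}-1)$.

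\textbf{The core step is an induction on $n$ using a ``footprint''-type argument.} The number of nonzeros of such a reduced polynomial over $(\mathbb{F}_q^*)^{n-1}$ is controlled by a recursive counting: fix all but one variable and view $F$ as a univariate polynomial in $x_{n-1}$ of degree $\le q-2$, whose number of roots in $\mathbb{F}_q^*$ is bounded by its degree. Writing $d=k(q-2)+\ell$ with $0\le \ell\le q-2$ encodes precisely how the total degree budget $d$ should be distributed among the variables to maximize zeros: one wants to spend a full $q-2$ on as many variables as possible (each such variable can be forced to vanish on all of $\mathbb{F}_q^*$, killing an entire $(q-1)$-fold factor), and spend the remaining $\ell$ on one further variable, which can only vanish at $\ell$ of the $q-1$ available values. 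This is exactly the combinatorial origin of the factor $(q-1)^{n-(k+2)}(q-1-\ell)$: after zeroing out $k$ variables' worth of values and trimming a $(q-1-\ell)$ slice on one more, the surviving nonzero set has size $(q-1)^{n-(k+2)}(q-1-\ell)$, so the zero set is maximized and the distance is the size of the complementary surviving block.

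\textbf{I expect the main obstacle to be proving the optimality of the extremal configuration}—that no form of degree $d$ achieves \emph{more} zeros than the product construction above. The lower bound on $\delta_{\mathbb{T}}(d)$ (equivalently the upper bound on the number of zeros) is the hard half; the matching form realizing equality is easy to exhibit explicitly, for example a product of $k$ factors of the shape $\prod_{\gamma\in\mathbb{F}_q^*}(t_i-\gamma t_n)$ (each of degree $q-1$) times one factor of degree $\ell$ vanishing on $\ell$ chosen ratios in the next variable. To establish optimality I would invoke a Gröbner-basis footprint bound: the standard monomials of the initial ideal of $I(\mathbb{T})$ with respect to a suitable term order index the possible ``leading behaviors,'' and since $I(\mathbb{T})$ is the complete intersection $(t_1^{q-1}-t_n^{q-1},\ldots,t_{n-1}^{q-1}-t_n^{q-1})$ from Proposition~\ref{ci-summary}(a), its footprint is the grid of exponent vectors with each coordinate in $\{0,\ldots,q-2\}$, and a degree-$d$ leading term can be chosen to maximize the number of complete residue classes it annihilates. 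The second regime $d\ge (q-2)(n-1)$ is the degenerate case where $d$ exceeds the regularity $(n-1)(q-2)$ of Proposition~\ref{ci-summary}(c): here $k\ge n-1$, the degree budget suffices to construct a form vanishing at all but one point, forcing $\delta_{\mathbb{T}}(d)=1$, and I would handle it separately as the boundary of the formula. Since this result is quoted from \cite{ci-codes}, I would ultimately cite that reference for the full extremal analysis rather than reproduce it.
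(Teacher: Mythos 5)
First, a point of comparison: the paper does not prove this statement at all; it is imported verbatim from \cite[Theorem~3.4]{ci-codes}, so the paper's only ``proof'' is the citation, and your closing decision to cite that reference for the extremal analysis is exactly what the authors do. Judged as a sketch, however, your proposal contains a concrete error in the half you call easy, namely the exhibition of a minimum-weight codeword. The factors you propose, $\prod_{\gamma\in\mathbb{F}_q^*}(t_i-\gamma t_n)=t_i^{q-1}-t_n^{q-1}$, are generators of $I(\mathbb{T})$ by Proposition~\ref{ci-summary}(a); hence your product lies in $I(\mathbb{T})$, vanishes identically on $\mathbb{T}$, and represents the zero codeword, which is excluded from the minimum-distance computation. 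Its degree, $k(q-1)+\ell$, also fails to equal $d=k(q-2)+\ell$. The same slip appears in your inline counting: degree $q-2$ in a variable buys vanishing at $q-2$ of the $q-1$ values of that variable (all but one), not ``on all of $\mathbb{F}_q^*$''; if any factor killed an entire $(q-1)$-fold fiber, the surviving nonzero set would be empty, contradicting the very count $(q-1)^{n-(k+2)}(q-1-\ell)$ that you then write down. The correct extremal form is $F=\prod_{i=1}^{k}\prod_{\gamma\in A_i}(t_i-\gamma t_n)\cdot\prod_{\gamma\in B}(t_{k+1}-\gamma t_n)$ with $A_i\subset\mathbb{F}_q^*$, $|A_i|=q-2$, and $B\subset\mathbb{F}_q^*$, $|B|=\ell$: it has degree $k(q-2)+\ell=d$, is not in $I(\mathbb{T})$, and is nonzero on exactly $1^k\cdot(q-1-\ell)\cdot(q-1)^{n-k-2}$ points of $\mathbb{T}$ (one surviving value for each of the first $k$ coordinates), which is the claimed minimum weight.

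Two smaller points. In the degenerate regime, at the boundary $d=(q-2)(n-1)$ one has $k=n-2$ and $\ell=q-2$ (not $k\geq n-1$), and the general formula already returns $1$ there; $k\geq n-1$ only occurs for strictly larger $d$. And in the optimality half, the one-variable-at-a-time root count needs care precisely when the specialized univariate polynomial vanishes identically, which is the case any footprint or inductive argument must confront; since you defer that analysis to \cite{ci-codes}, as the paper itself does, this is acceptable, but the sketch should not suggest the upper bound on the number of zeros is a routine induction.
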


\begin{corollary}\label{upper-lower-bounds-md} 
Let $G$ be a connected bipartite graph with
bipartition $(V_1,V_2)$. Then 
$$
(q-2)^2(q-1)^{|V_1|+|V_2|-4}\leq \delta_{X}(1)\leq
(q-2)(q-1)^{|V_1|+|V_2|-3}.
$$
\end{corollary}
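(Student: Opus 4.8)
The plan is to read both inequalities directly off the two results already available: the sandwiching estimate of Theorem~\ref{jun25-11} specialized to degree $d=1$, together with the explicit minimum-distance formula for a projective torus in Theorem~\ref{maria-vila-hiram-eliseo}. Writing $s_i=|V_i|$, Theorem~\ref{jun25-11} at $d=1$ gives
$$
\delta_{X_1}(1)\,\delta_{X_2}(1)\leq \delta_X(1)\leq \delta_{X_3}(1),
$$
where $X_i$ is a projective torus in $\mathbb{P}^{s_i-1}$ for $i=1,2$ and $X_3$ is a projective torus in $\mathbb{P}^{s_1+s_2-2}$. So the whole problem reduces to evaluating the minimum distance of a projective torus at $d=1$.

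First I would pin down the integers $k,\ell$ of Theorem~\ref{maria-vila-hiram-eliseo} for $d=1$. Since $q\neq 2$ we have $q\geq 3$, hence $q-2\geq 1$, and the only way to write $1=k(q-2)+\ell$ with $k\geq 0$ and $1\leq\ell\leq q-2$ is $k=0$, $\ell=1$. Substituting into the nontrivial branch, a projective torus $\mathbb{T}$ in $\mathbb{P}^{n-1}$ satisfies $\delta_{\mathbb{T}}(1)=(q-1)^{n-2}(q-2)$. Applying this with $n=s_i$ gives $\delta_{X_i}(1)=(q-1)^{s_i-2}(q-2)$ for $i=1,2$, whose product is $(q-2)^2(q-1)^{s_1+s_2-4}$, the asserted lower bound; applying it with $n=s_1+s_2-1$ gives $\delta_{X_3}(1)=(q-1)^{s_1+s_2-3}(q-2)$, the asserted upper bound. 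Feeding these three values back into the displayed inequality completes the argument.

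The one point I would check carefully---and the only place the argument is not completely mechanical---is the hypothesis $d\leq (q-2)(n-1)-1$ required to land in the nontrivial branch of Theorem~\ref{maria-vila-hiram-eliseo}. For $d=1$ this is the condition $(q-2)(n-1)\geq 2$, which holds for each of the three tori as soon as $G$ is not a single edge, i.e.\ $s_1+s_2\geq 3$. In the degenerate boundary cases one instead lands in the branch $\delta=1$: for the lower-bound tori $X_1,X_2$ the closed formula then underestimates the true minimum distance, so the lower bound only gets stronger, while for the upper bound one need only exclude (or directly inspect) the single trivial graph $G=\mathcal{K}_{1,1}$. I therefore expect the substance of the proof to be the bookkeeping of these branch conditions rather than any serious difficulty.
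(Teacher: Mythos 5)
Your proposal is correct and follows exactly the paper's own route: the paper proves this corollary in one line by combining Theorem~\ref{jun25-11} with Theorem~\ref{maria-vila-hiram-eliseo}, which is precisely your argument (with $k=0$, $\ell=1$ in the torus formula). Your additional care about the branch condition $d\leq(q-2)(n-1)-1$ and the degenerate cases (e.g.\ $G=\mathcal{K}_{1,1}$) is more scrupulous than the paper itself, which silently ignores them.
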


\begin{proof} It follows readily from Theorems~\ref{jun25-11} and
\ref{maria-vila-hiram-eliseo}. 
\end{proof}

\begin{example} Let $G$ be a cycle of length $6$. If $K=\mathbb{F}_5$,
then $144\leq \delta_{X}(1)\leq 192$. The exact value of $\delta_{X}(1)$ is $186$.
\end{example}

\begin{proposition}\label{jun12-11}
Let $\mathcal{C}$ be a clutter and let $G$ be a graph. {\rm(a)} If
there is $A\subset V_\mathcal{C}$ so  
that $|A\cap e|=1$ for any $e\in E_\mathcal{C}$, then 
$\delta_Y(d)\leq(q-1)\delta_X(d)$ for any $d\geq 1$. {\rm (b)} If $G$
is a connected bipartite graph,  
then $\delta_Y(1)=(q-1)\delta_X(1)$. 
\end{proposition}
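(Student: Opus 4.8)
The plan is to reduce both parts to an analysis of the fibers of the group epimorphism $\theta\colon Y\to X$ constructed in the proof of Proposition~\ref{maria-cafeteria-cinvestav-gen}. First I would record the shape of a fiber: under the hypothesis in (a) one has $|Y|=(q-1)|X|$, so every fiber of $\theta$ over a point $[Q]=[(q_1,\ldots,q_s)]$ of $X$ has exactly $q-1$ elements, and the argument of Proposition~\ref{maria-cafeteria-cinvestav-gen}(ii) shows these are precisely the points $[(q_1c,\ldots,q_sc,1)]$ with $c\in K^*$.

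For part (a), I would pick a homogeneous $F\in S_d$ realizing $\delta_X(d)$ on $X$ and regard it as an element of $S[u]_d$ not involving $u$. Homogeneity gives $F(q_1c,\ldots,q_sc)=c^dF(q_1,\ldots,q_s)$, so $F$ is nonzero at a fiber point if and only if it is nonzero at $[Q]$; hence $F$ is nonzero on exactly $(q-1)\delta_X(d)$ points of $Y$ and does not lie in $I(Y)$, yielding $\delta_Y(d)\le(q-1)\delta_X(d)$.

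For part (b), a connected bipartite $G$ satisfies the hypothesis of (a) with $A=V_1$, so the bound $\delta_Y(1)\le(q-1)\delta_X(1)$ is already in hand, and the real content is the reverse inequality: every nonzero codeword of $C_Y(1)$ should have weight at least $(q-1)\delta_X(1)$. I would write such a form as $F=L+a_{s+1}u$ with $L=\sum_{i=1}^s a_it_i$, so that on the fiber over $[Q]$ its value is $cL(Q)+a_{s+1}$. When $a_{s+1}=0$ the previous homogeneity computation applies and gives weight $(q-1)\,n(L)\ge(q-1)\delta_X(1)$, where $n(L)$ is the number of points of $X$ at which $L\ne0$ (note $L\notin I(X)$ because $F\notin I(Y)$). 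The main obstacle is the case $a_{s+1}\ne0$, which must be treated separately.

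For that case I would count fiberwise. Over the $z(L):=|X|-n(L)$ points with $L(Q)=0$ the form $F$ equals the nonzero constant $a_{s+1}$, contributing $q-1$ each; over the remaining points, as $c$ runs through $K^*$ the value $cL(Q)+a_{s+1}$ runs through $K\setminus\{a_{s+1}\}$, which contains $0$, so $F$ vanishes at exactly one fiber point and contributes $q-2$. Summing gives weight $(q-1)z(L)+(q-2)(|X|-z(L))=(q-2)|X|+z(L)\ge(q-2)|X|$. The proof then closes with the inequality $(q-1)\delta_X(1)\le(q-2)|X|$, which I would deduce from Theorem~\ref{jun25-11} (so $\delta_X(1)\le\delta_{X_3}(1)=(q-1)^{n-3}(q-2)$ for the spanning-tree torus $X_3$ in $\mathbb{P}^{n-2}$) together with $|X|=(q-1)^{n-2}$ from Corollary~\ref{maria-cafeteria-cinvestav-1}. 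Combining this with part (a) gives the equality $\delta_Y(1)=(q-1)\delta_X(1)$.
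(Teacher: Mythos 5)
Your proposal is correct and takes essentially the same route as the paper: part (a) is the identical homogeneity-on-fibers argument, and part (b) uses the same case split on the coefficient of $u$, the same at-most-one-zero-per-fiber count giving weight at least $(q-2)|X|$, and the same closing inequality $(q-1)\delta_X(1)\leq(q-2)|X|$ obtained from Theorem~\ref{jun25-11} and Theorem~\ref{maria-vila-hiram-eliseo} (packaged in the paper as Corollary~\ref{upper-lower-bounds-md}). One small citation nit: the fact $|X|=(q-1)^{n-2}$ comes from \cite[Corollary~3.8]{algcodes} (equivalently, from Corollary~\ref{maria-cafeteria-cinvestav-1} combined with Theorem~\ref{maria-cafeteria-cinvestav}(i)); Corollary~\ref{maria-cafeteria-cinvestav-1} by itself gives $|Y|$, not $|X|$.
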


\begin{proof} We may assume that $A=\{y_1,\ldots,y_\ell\}$. Let
$\beta$ be a generator 
of the cyclic group
$(\mathbb{F}_q^*,\cdot)$. We can choose $P_1,\ldots,P_m$
in $X^*$ so that $X=\{[P_1],\ldots,[P_m]\}$. If $P=P_\ell$ for some
$\ell$, then we can write
$P=(x^{v_1},\ldots,x^{v_s})$. We set $\gamma_i=\beta^i$. 
From the equality
\begin{eqnarray*}
\gamma_i P&=&\left(({\gamma_i}{x_1})^{v_{11}}\cdots({\gamma_i}{x_\ell})^{v_{1\ell}}
x_{\ell+1}^{v_{1,\ell+1}}\cdots   
x_n^{v_{1n}},\ldots,({\gamma_i}{x_1})^{v_{s1}} \cdots
({\gamma_i}{x_\ell})^{v_{s\ell}}
x_{\ell+1}^{v_{s,\ell+1}}\cdots x_n^{v_{sn}}\right)
\end{eqnarray*}
we get that $\gamma_i P\in X^*$. By 
Proposition~\ref{maria-cafeteria-cinvestav-gen}, we have that
$|Y|=(q-1)|X|$. Therefore
$$
Y=\{[(\beta P_1,1)],\ldots,[(\beta^{q-1}P_1,1)],\ldots,[(\beta P_m,1)],
\ldots,[(\beta^{q-1}P_m,1)]\}.
$$
To show (a) pick $0\neq f\in S_d$ such that $\delta_X(d)=|\{P_i\vert\,
f(P_i)\neq 0\}|$. Notice that $f(P_i)=0$ if and only if
$f(\beta^jP_i)=0$ for $1\leq j\leq q-1$. Hence, $f$ does not vanish in
exactly $(q-1)\delta_X(d)$ points of $Y$. Consequently $\delta_Y(d)$
is at most $(q-1)\delta_X(d)$.

To show (b) pick a polynomial $F$ in $S[u]_1$ such that
$\delta_Y(1)=|\{Q\in Y\vert\, F(Q)\neq 0\}|$. If $F\in S$, then 
$F(P_i)\neq 0$ if and only if $F(\beta^jP_i,1)\neq 0$ for some $1\leq
j\leq q-1$ if and only if $F(\beta^jP_i,1)\neq 0$ for all $1\leq
j\leq q-1$. Hence, $\delta_Y(1)=(q-1)r_0$, where $r_0=|\{P_i\vert\,
F(P_i)\neq 0\}|$. Thus, $(q-1)\delta_X(1)\leq (q-1)r=\delta_Y(1)$.
Then, using (a), we get $\delta_Y(1)=(q-1)\delta_X(1)$. We
may now assume that $F=\lambda_1t_1+\cdots+\lambda_st_s+u$, 
where $\lambda_i\in K$ for all $i$. It is not hard to verify that 
if $F(\beta^\ell P_i,1)=0$ for some $1\leq \ell\leq q-1$, then
$F(\beta^j P_i,1)\neq 0$ for all $1\leq j\leq q-1$, $j\neq \ell$.
Hence, the number of zeros in $Y$ of $F$ is at most $|X|=(q-1)^{n-2}$.
Consequently one has
$$
(q-1)^{n-2}(q-2)=|Y|-(q-1)^{n-2}\leq\delta_Y(1)\leq
(q-1)\delta_X(1)\leq (q-1)^{n-2}(q-2). 
$$
The first equality is shown in
Corollary~\ref{maria-cafeteria-cinvestav-1} and the 
last inequality follows from
Corollary~\ref{upper-lower-bounds-md}. Therefore, we have equality
everywhere. In particular $\delta_Y(1)=(q-1)\delta_X(1)$.
\end{proof}

\section{Complete intersection $I(Y)$ from
clutters}\label{ci-clutters-affine}

We continue to use the notation and definitions used in
Sections~\ref{intro-rs-codes} and \ref{degree-and-reg}.  
In this section we characterize when $I(Y)$ is
a complete intersection in algebraic and geometric terms. For graphs,
we describe in graph theoretical terms and in terms of the
number of elements of the base field when $I(Y)$ is a complete
intersection.  

\begin{lemma}\label{jan6-10-y}
Let $\mathcal{C}$ be a clutter. If $f\neq 0$ is a homogeneous
polynomial of $I(Y)$ of the form $t_i^b-t^c$ with $b\in\mathbb{N}$,
$c\in\mathbb{N}^s$ and $i\notin{\rm supp}(c)$, 
then $\deg(f)\geq q-1$. Moreover if $b=q-1$, then
$f=t_i^{q-1}-t_j^{q-1}$ for some $j\neq i$.
\end{lemma}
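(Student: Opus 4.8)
The plan is to convert membership of $f$ in $I(Y)$ into an arithmetic condition by evaluating $f$ at the explicit points of $Y$. Write $f=t_i^b-t^c$ and set $w=\sum_{j=1}^s c_jv_j\in\mathbb{N}^n$, using the convention $v_{s+1}=0$ so that the formula also covers the case $i=s+1$ (where $t_i=u$); homogeneity forces $b=|c|$. Each point of $Y$ has the representative $(x^{v_1},\ldots,x^{v_s},1)$ with $x\in(K^*)^n$, and its last coordinate is $1$, so the vanishing $f=0$ on $Y$ becomes $x^{bv_i}=x^{w}$ for every $x\in(K^*)^n$. Since every element of $K^*$ has order dividing $q-1$, specializing the coordinates of $x$ one at a time to a generator $\beta$ of $K^*$ shows this is equivalent to the coordinatewise congruence $bv_i\equiv w\pmod{q-1}$.

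For the first inequality I would argue by contradiction, assuming $1\le b\le q-2$; note $b\ge 1$ because $i\notin{\rm supp}(c)$ makes $t_i^b$ and $t^c$ distinct monomials, so $f\neq 0$ gives $b\ge 1$. Each entry of $v_i$ lies in $\{0,1\}$, and $0\le w_k\le\sum_j c_j=b\le q-2<q-1$, so the congruence pins $w_k$ down exactly: $w_k=b(v_i)_k$. In particular $w_k=0$ for every vertex $y_k\notin e_i$. Since $w_k=\sum_{j:\,y_k\in e_j}c_j$ is a sum of nonnegative terms, this forces $c_j=0$ whenever $e_j$ contains a vertex outside $e_i$; equivalently every edge $e_j$ with $c_j>0$ satisfies $e_j\subseteq e_i$. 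The clutter antichain property then gives $e_j=e_i$, i.e. $j=i$, but $c_i=0$ since $i\notin{\rm supp}(c)$. Hence $c=0$ and $b=0$, a contradiction, so $\deg(f)=b\ge q-1$.

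For the addendum, set $b=q-1$; then $bv_i\equiv 0$, so the congruence reduces to $w\equiv 0\pmod{q-1}$, i.e. each $w_k$ is a nonnegative multiple of $q-1$. Since $q\neq 2$ we have $b=q-1\ge 2$, so $c\neq 0$ and ${\rm supp}(c)\neq\emptyset$. If $c_j>0$ and $y_k\in e_j$, then $w_k\ge c_j>0$, hence $w_k\ge q-1$; but $w_k\le|c|=q-1$, so $w_k=q-1=\sum_{j'}c_{j'}$. Equality forces $c_{j'}=0$ for every edge $e_{j'}$ not containing $y_k$, that is, every edge in ${\rm supp}(c)$ contains $y_k$. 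Applying this at all vertices of two edges indexed in ${\rm supp}(c)$ shows each of these two edges is contained in the other, so the antichain property collapses ${\rm supp}(c)$ to a single index $\{j\}$; then $c_j=|c|=q-1$, giving $t^c=t_j^{q-1}$ with $j\neq i$.

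I expect the main obstacle to be not the character computation but the equality case of the bound $w_k\le b$: the congruences alone cannot exclude $w_k=b$, and it is precisely the clutter condition $f_1\not\subset f_2$ that converts the vanishing data into the numerical conclusion. In writing the details I would be careful to confirm that edges are nonempty (so the case $i=s+1$, where $e_i=\emptyset$, still yields $c=0$) and that the characteristic vectors $v_1,\ldots,v_s$ are pairwise distinct, ensuring that $e_j\subseteq e_i$ genuinely forces $j=i$.
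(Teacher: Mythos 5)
Your proof is correct and follows essentially the approach the paper relies on: the paper's own proof is only a pointer to adapting \cite[Lemma~3.4]{d-compl}, and that adaptation is precisely your argument---evaluate the binomial at the points $(x^{v_1},\ldots,x^{v_s},1)$ of $Y$, reduce vanishing to coordinatewise congruences modulo $q-1$ by specializing one coordinate at a time to a generator of $K^*$, and then exploit the $0/1$ structure of the $v_j$ together with the clutter antichain condition. The side conditions you flag (edges nonempty and pairwise distinct, the convention $v_{s+1}=0$ for the case $t_i=u$) are exactly the points the cited adaptation leaves implicit, and you handle them correctly.
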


\begin{proof} It follows adapting the proof of \cite[Lemma~3.4]{d-compl}.
\end{proof}

\begin{definition}\label{jul25-11}
The ideal $I(Y)$ is called a {\it complete
intersection\/} if it can be generated by $s$ homogeneous
polynomials of $S[u]$.
\end{definition}

\begin{lemma}\label{1-jun-11} Let $\mathcal{C}$ be a clutter. If
$I(Y)$ is a complete intersection, then 
$$I(Y)=(t_1^{q-1}-t_{s+1}^{q-1},\ldots,t_s^{q-1}-t_{s+1}^{q-1}).$$ 
\end{lemma}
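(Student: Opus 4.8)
The plan is to prove the nontrivial inclusion by a degree computation, after reducing the statement to the single numerical assertion $|Y|=(q-1)^{s}$. First I would record the trivial inclusion. Set $J=(t_1^{q-1}-t_{s+1}^{q-1},\ldots,t_s^{q-1}-t_{s+1}^{q-1})$. Every point of $Y$ lies in the projective torus $\mathbb{T}'\subset\mathbb{P}^{s}$, so once a representative is fixed each of its coordinates is a $(q-1)$-st root of unity; hence each generator of $J$ vanishes on $Y$ and $J\subseteq I(Y)$. By Proposition~\ref{ci-summary}, applied in the $s+1$ variables $t_1,\ldots,t_{s+1}$, the ideal $J$ equals $I(\mathbb{T}')$, it is a complete intersection, and $\deg(S[u]/J)=(q-1)^{s}$ with ${\rm reg}(S[u]/J)=s(q-2)$.

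Next I would reduce to a degree statement. The quotient $S[u]/J=S[u]/I(\mathbb{T}')$ is a reduced, one-dimensional Cohen--Macaulay ring, so it has no associated prime of dimension $0$; consequently every nonzero graded submodule has dimension $1$ and positive degree. Applying this to the kernel $I(Y)/J$ of the surjection $S[u]/J\twoheadrightarrow S[u]/I(Y)$, the nonvanishing of $I(Y)/J$ would force $\deg(S[u]/I(Y))<\deg(S[u]/J)=(q-1)^{s}$. Since $\deg(S[u]/I(Y))=|Y|$, it therefore suffices to prove $|Y|=(q-1)^{s}$, and then $I(Y)=J$ follows automatically.

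To establish $|Y|=(q-1)^{s}$ I would combine the complete intersection hypothesis with Lemma~\ref{jan6-10-y}. As $I(Y)$ is a lattice ideal \cite{algcodes}, it admits a minimal generating set of binomials; being a complete intersection of height $s$, it is minimally generated by exactly $s$ binomials $g_1,\ldots,g_s$, which may be chosen with disjoint supports since each $t_j$ is a nonzerodivisor on $S[u]/I(Y)$, and then $|Y|=\deg(S[u]/I(Y))=\prod_i\deg(g_i)$. Since $Y\subseteq\mathbb{T}'$ we already know $|Y|\le(q-1)^{s}$, so the whole problem collapses to showing that $\deg(g_i)\ge q-1$ for each $i$. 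The upper side of the estimate is automatic: passing to the Artinian reduction $A=S[u]/(I(Y),t_{s+1})$, in which $t_j^{q-1}=0$ for all $j\le s$ because $J\subseteq I(Y)$, bounds the top nonzero degree of $A$ by that of $K[t_1,\ldots,t_s]/(t_1^{q-1},\ldots,t_s^{q-1})$ and hence gives $\sum_i(\deg(g_i)-1)={\rm reg}(S[u]/I(Y))\le s(q-2)$. The lower bound is the decisive one, and here Lemma~\ref{jan6-10-y} is the key tool: any binomial of $I(Y)$ one of whose terms is a pure power $t_i^{b}$ has degree at least $q-1$, and if its degree equals $q-1$ then it is forced to have the form $t_a^{q-1}-t_b^{q-1}$.

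The main obstacle is precisely to promote this pure-power degree bound to a bound on the chosen generators, and this is where the clutter hypothesis must enter. A priori a minimal binomial generator need not have a pure-power term, and without further input one can exhibit lattice complete intersections with $|Y|<(q-1)^{s}$; what excludes them is that the characteristic vectors $v_1,\ldots,v_s$ of a clutter are \emph{distinct} $0/1$ vectors, so a degenerate low-degree relation such as $t_a-t_b\in I(Y)$ (which would force the monomials $y^{v_a}$ and $y^{v_b}$ to agree on $X^{*}$, hence $v_a=v_b$) is impossible. I would therefore use the distinctness of the edge vectors to argue that each disjoint-support generator $g_i$ is of pure-power type, whence $\deg(g_i)\ge q-1$ by Lemma~\ref{jan6-10-y}. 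Together with $\prod_i\deg(g_i)=|Y|\le(q-1)^{s}$ this forces $\deg(g_i)=q-1$ for every $i$ and $|Y|=(q-1)^{s}$, and by the reduction of the second paragraph we conclude $I(Y)=J$, as required.
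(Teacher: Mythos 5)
Your overall architecture is partly sound: since $Y\subseteq\mathbb{T}'$ and both $J=(t_1^{q-1}-t_{s+1}^{q-1},\ldots,t_s^{q-1}-t_{s+1}^{q-1})=I(\mathbb{T}')$ and $I(Y)$ are vanishing ideals of finite point sets, the identity $I(Y)=J$ is indeed equivalent to $|Y|=(q-1)^{s}$, and the Bezout count $|Y|=\prod_i\deg(g_i)$ for a graded binomial complete intersection of height $s$ is also correct. The gap sits exactly at the step you yourself call decisive, and the tool you propose there cannot close it. You claim that the distinctness of the characteristic vectors $v_1,\ldots,v_s$ forces every minimal binomial generator $g_i$ to be of pure-power type $t_i^{b}-t^{c}$, so that Lemma~\ref{jan6-10-y} applies. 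But distinctness (indeed, even the full clutter condition of pairwise incomparability) only excludes binomials $t_a-t_b$ of degree one; it says nothing about generators whose two monomials each involve two or more variables, and such generators genuinely occur for clutters. For the four-cycle with edges $y_1y_2,\,y_2y_3,\,y_3y_4,\,y_1y_4$, the identity $(x_1x_2)(x_3x_4)=(x_2x_3)(x_1x_4)$ makes $t_1t_3-t_2t_4$ a degree-two minimal generator of $I(Y)$ that is not of pure-power type. Of course, for the four-cycle $I(Y)$ is not a complete intersection---but that is precisely the point: it is the complete intersection hypothesis, not the clutter structure, that must exclude such generators, and your argument never brings that hypothesis to bear on the shape of the $g_i$.

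The idea you are missing is the key step that the paper imports by reusing the proof of \cite[Theorem~4.4]{ci-codes} together with Lemma~\ref{jan6-10-y}, and it uses the presentation $I(Y)=(g_1,\ldots,g_s)$ directly. Fix $i$ and write $t_i^{q-1}-t_{s+1}^{q-1}=\sum_j h_jg_j$; comparing monomials on both sides, the monomial $t_i^{q-1}$ must be a monomial multiple of one of the two terms of some $g_j$, so that term divides $t_i^{q-1}$ and is a pure power $t_i^{b}$ with $1\leq b\leq q-1$. Since the two terms of a minimal generator of a lattice ideal may be taken with disjoint supports, this $g_j$ is of pure-power type of degree $b\leq q-1$, whence Lemma~\ref{jan6-10-y} forces $b=q-1$ and, by its ``moreover'' clause, $g_j=t_i^{q-1}-t_k^{q-1}$ for some $k\neq i$. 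Note, moreover, that even this only produces, for each index $i$, \emph{some} generator of this special form; it does not yet show that \emph{all} $s$ generators are special (a priori the special ones could span only a forest on the indices $1,\ldots,s+1$, leaving room for non-special generators of small degree), so a further counting/connectivity argument is still required before your degree count yields $|Y|=(q-1)^{s}$. That remaining work is exactly what the cited proof in \cite{ci-codes} supplies and what your proposal, as written, does not.
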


\begin{proof} Taking into account Lemma~\ref{jan6-10-y}, we can use
the same proof of \cite[Theorem~4.4]{ci-codes}. 
\end{proof}

\begin{definition}\label{projective-closure-def} The 
{\it projective closure\/} of $X^*$, denoted by $\overline{X^*}$, is
given by $\overline{X^*}:=\overline{Y}$, where 
$\overline{Y}$ is the closure of $Y$ in the Zariski topology of $\mathbb{P}^s$.
\end{definition}

The next theorem complements a result of \cite{ci-codes} showing that
$I(X)$ is a complete intersection 
if and only if $X$ is a projective torus. 

\begin{theorem}\label{ci-affine} Let $\mathcal{C}$ be a clutter with $s$ edges and let
$T=\{(x_1,\ldots,x_s)\vert\, x_i\in K^*\mbox{ for all }i\}$ be an
affine torus in $\mathbb{A}^s$. The following are
equivalent\/{\rm:}
\begin{itemize}
\item[($\mathrm{a}_1$)] $I(Y)$ is a complete intersection.
\item[($\mathrm{a}_2$)]
$I(Y)=(t_1^{q-1}-t_{s+1}^{q-1},\ldots,t_s^{q-1}-t_{s+1}^{q-1})$.
\item[($\mathrm{a}_3$)] $X^*=T$.
\item[($\mathrm{a}_4$)] $I(X^*)=(t_1^{q-1}-1,\ldots,t_s^{q-1}-1)$.
\end{itemize}
\end{theorem}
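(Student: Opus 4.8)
The plan is to prove the equivalence of the four statements by establishing a cycle of implications together with the easy reverse directions, using the lemmas already available. The four conditions fall naturally into two pairs: $(\mathrm{a}_1)\Leftrightarrow(\mathrm{a}_2)$ is essentially Lemma~\ref{1-jun-11} combined with the trivial observation that the displayed ideal is generated by $s$ polynomials, and $(\mathrm{a}_3)\Leftrightarrow(\mathrm{a}_4)$ is the standard fact that the vanishing ideal of the full affine torus $T$ is $(t_1^{q-1}-1,\ldots,t_s^{q-1}-1)$ (this is the affine analogue of Proposition~\ref{ci-summary}(a), and $X^*=T$ forces equality of vanishing ideals since both sets are reduced). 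So the substantive content is linking the projective condition $(\mathrm{a}_2)$ to the affine condition $(\mathrm{a}_3)$.

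First I would prove $(\mathrm{a}_2)\Rightarrow(\mathrm{a}_3)$ by a degree/cardinality count. If $I(Y)=(t_1^{q-1}-t_{s+1}^{q-1},\ldots,t_s^{q-1}-t_{s+1}^{q-1})$, then by Proposition~\ref{ci-summary}(b)--(c) applied to the projective torus $\mathbb{T}'$ in $\mathbb{P}^s$ (whose vanishing ideal is exactly this ideal) we get $\deg(S[u]/I(Y))=(q-1)^s$. Since the degree equals $|Y|$, this gives $|Y|=(q-1)^s=|\mathbb{T}'|$, and as $Y\subseteq\mathbb{T}'$ we conclude $Y=\mathbb{T}'$. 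Dehomogenizing (setting the last coordinate to $1$) then yields $X^*=T$. The key point is that the complete-intersection hypothesis pins down $I(Y)$ as the vanishing ideal of the \emph{whole} projective torus, and a set contained in a finite set of known size with the same size must be everything.

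For the reverse direction $(\mathrm{a}_3)\Rightarrow(\mathrm{a}_2)$, assuming $X^*=T$ gives $Y=\mathbb{T}'$, the projective torus in $\mathbb{P}^s$, so $I(Y)=I(\mathbb{T}')$, which by Proposition~\ref{ci-summary}(a) is precisely $(t_1^{q-1}-t_{s+1}^{q-1},\ldots,t_s^{q-1}-t_{s+1}^{q-1})$ (after relabeling $t_{s+1}$ as the reference coordinate). This closes the loop once $(\mathrm{a}_1)\Leftrightarrow(\mathrm{a}_2)$ is in hand: $(\mathrm{a}_2)\Rightarrow(\mathrm{a}_1)$ is immediate since the generating set has $s$ elements, and $(\mathrm{a}_1)\Rightarrow(\mathrm{a}_2)$ is exactly Lemma~\ref{1-jun-11}.

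The main obstacle, and the step deserving the most care, is $(\mathrm{a}_1)\Rightarrow(\mathrm{a}_2)$, but this is already delegated to Lemma~\ref{1-jun-11}, which in turn rests on Lemma~\ref{jan6-10-y} and the structure theory of \cite{ci-codes}; the genuinely new work here is the translation between the projective complete-intersection statement and the affine geometric statement $X^*=T$, where the crucial leverage is that $Y$ always sits inside the projective torus $\mathbb{T}'$ and that $|Y|=\deg(S[u]/I(Y))$. I would therefore organize the proof as the chain $(\mathrm{a}_1)\Rightarrow(\mathrm{a}_2)\Rightarrow(\mathrm{a}_3)\Rightarrow(\mathrm{a}_2)\Rightarrow(\mathrm{a}_1)$ together with $(\mathrm{a}_3)\Leftrightarrow(\mathrm{a}_4)$ handled separately, being careful that the identification of the displayed ideal with $I(\mathbb{T}')$ uses the permutation symmetry noted in the remark preceding Theorem~\ref{sunday-jun19-11} about extending ideals from $S$ to $S'=S[u]$.
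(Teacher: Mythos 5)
Your proposal is correct, but it is organized differently from the paper's proof, which runs the single cycle $(\mathrm{a}_1)\Rightarrow(\mathrm{a}_2)\Rightarrow(\mathrm{a}_3)\Rightarrow(\mathrm{a}_4)\Rightarrow(\mathrm{a}_1)$ rather than your three pairwise equivalences. Two of your steps genuinely diverge. First, for $(\mathrm{a}_2)\Rightarrow(\mathrm{a}_3)$ the paper identifies $I(Y)=I(\mathbb{T}')$ and invokes the fact that two projective varieties with the same vanishing ideal coincide (citing \cite[Lemma~4.2]{algcodes}), whereas you count: $|Y|=\deg(S[u]/I(Y))=(q-1)^s=|\mathbb{T}'|$ together with $Y\subseteq\mathbb{T}'$; both work, and yours uses only facts already recorded in Section~\ref{degree-and-reg} plus Proposition~\ref{ci-summary}(c). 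Second, and more significantly, by closing the loop through $(\mathrm{a}_3)\Rightarrow(\mathrm{a}_2)\Rightarrow(\mathrm{a}_1)$ (via $X^*=T\Rightarrow Y=\mathbb{T}'$ and Proposition~\ref{ci-summary}(a)) you bypass the paper's $(\mathrm{a}_4)\Rightarrow(\mathrm{a}_1)$ step entirely, which is where the paper uses the elimination order, the Gr\"obner basis $t_1^{q-1}-1,\ldots,t_s^{q-1}-1$, and the projective-closure result \cite[Proposition~2.4.30]{monalg}. The price is paid in $(\mathrm{a}_3)\Leftrightarrow(\mathrm{a}_4)$: you cite $I(T)=(t_1^{q-1}-1,\ldots,t_s^{q-1}-1)$ as standard, but this is exactly what the paper proves (the nontrivial inclusion) by the division algorithm plus the Combinatorial Nullstellensatz, so a self-contained write-up would still need that argument; moreover, for $(\mathrm{a}_4)\Rightarrow(\mathrm{a}_3)$ you should state explicitly that finite point sets are Zariski closed, so $I(X^*)=I(T)$ forces $X^*=V(I(X^*))=V(I(T))=T$ --- your parenthetical remark only justifies the direction $(\mathrm{a}_3)\Rightarrow(\mathrm{a}_4)$. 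In sum, your decomposition trades the paper's Gr\"obner basis and projective-closure computation for a degree count and a standard affine-torus fact, giving a somewhat shorter argument at the cost of outsourcing one lemma.
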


\begin{proof} ($\mathrm{a}_1$)$\Rightarrow$($\mathrm{a}_2$): It
follows at once from Lemma~\ref{1-jun-11}.
($\mathrm{a}_2$)$\Rightarrow$($\mathrm{a}_3$): By
Proposition~\ref{ci-summary} one has 
$I(Y)=I(\mathbb{T}')=(\{t_i^{q-1}-t_{s+1}^{q-1}\}_{i=1}^{s})$, where
$\mathbb{T}'$ is a projective torus in $\mathbb{P}^s$. As
$Y$ and $\mathbb{T}'$ are both projective varieties, we get that
$Y=\mathbb{T}'$ (see \cite[Lemma 4.2]{algcodes}). We need only show
the inclusion $T\subset X^*$ . Take $a$ in $T$. Then,
$[(a,1)]\in\mathbb{T}'=Y$. Thus, we get $a\in X^*$.
($\mathrm{a}_3$)$\Rightarrow$($\mathrm{a}_4$): We need only show the
inclusion ``$\subset$''. Take $f\in I(X^*)$. By the division algorithm
\cite[Theorem~1.5.9, p.~30]{AL} we can
write 
\begin{equation*}
f=h_1(t_1^{q-1}-1)+\cdots+ h_s(t_s^{q-1}-1)+g,
\end{equation*}
for some $h_1,\ldots,h_s,g$ in $S$, where the monomials that occur 
in $g$ are not divisible by any of the monomials $t_1^{q-1},\ldots,t_s^{q-1}$, i.e.,
$\deg_{t_i}(g)<q-1$ for $i=1,\ldots,s$. Hence, since $g$ vanishes on
all $T$, using the Combinatorial Nullstellensatz
\cite[Theorem~1.2]{alon-cn} it follows readily that $g=0$, that is,
$f\in(\{t_i^{q-1}-1\}_{i=1}^s)$.
($\mathrm{a}_4$)$\Rightarrow$($\mathrm{a}_1$): Let $\succ$ be the
{\it elimination order\/} on the  
monomials of $S[u]$, where $u=t_{s+1}$. Recall that 
this order is defined as  $t^b\succ t^a$ if the degree of $t^b$ is
greater than that of $t^a$, or both degrees are equal, and the last
nonzero component  of $b-a$ is negative. As $K$ is a finite field,
$Y$ is the projective closure of $X^*$, i.e.,
$\overline{X^*}=\overline{Y}=Y$. Since
$t_1^{q-1}-1,\ldots,t_s^{q-1}-1$ form a Gr\"obner basis with respect
to $\succ$, using \cite[Proposition~2.4.30]{monalg}, we get the
equality $I(Y)=(\{t_i^{q-1}-t_{s+1}^{q-1}\}_{i=1}^s)$. Thus
$I(Y)$ is a complete intersection.
\end{proof}

\begin{corollary}\label{jun19-11-1} Let $\mathcal{C}$ be a clutter. If $I(Y)$ is a
complete intersection, then $I(X)$ is a complete intersection.
\end{corollary}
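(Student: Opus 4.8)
The plan is to reduce the statement to Theorem~\ref{ci-affine} together with the explicit description of the vanishing ideal of a projective torus in Proposition~\ref{ci-summary}. The whole argument is a translation between the affine and the projective pictures.

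First I would invoke the equivalence $(\mathrm{a}_1)\Leftrightarrow(\mathrm{a}_3)$ of Theorem~\ref{ci-affine}. Since $I(Y)$ is assumed to be a complete intersection, this gives $X^*=T$; that is, the affine algebraic toric set parameterized by the edges of $\mathcal{C}$ is the full affine torus $T=\{(x_1,\ldots,x_s)\mid x_i\in K^*\}$. Next I would pass to the projectivization: by definition $X$ is the image of $X^*$ under the map $x\mapsto[x]$, so from $X^*=T=(K^*)^s$ one reads off that $X$ is exactly the projective torus $\mathbb{T}=\{[(x_1,\ldots,x_s)]\mid x_i\in K^*\}$ of Definition~\ref{projectivetorus-def}.

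Finally, Proposition~\ref{ci-summary}(a) yields $I(X)=I(\mathbb{T})=(t_1^{q-1}-t_s^{q-1},\ldots,t_{s-1}^{q-1}-t_s^{q-1})$, so $I(X)$ is generated by $s-1$ homogeneous binomials. Since $S/I(X)$ is Cohen--Macaulay of dimension $1$, the height of $I(X)$ equals $s-1$, and therefore $I(X)$ is a complete intersection. Alternatively, having identified $X=\mathbb{T}$, one may simply quote the result of \cite{ci-codes} stated just before Theorem~\ref{ci-affine}, namely that $I(X)$ is a complete intersection if and only if $X$ is a projective torus.

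I do not expect a genuine obstacle here: the content of the corollary is already packaged in Theorem~\ref{ci-affine} and Proposition~\ref{ci-summary}. The only point needing a (routine) verification is the identification $X=\mathbb{T}$, i.e.\ that projectivizing the full affine torus produces the full projective torus, which is immediate from the definitions.
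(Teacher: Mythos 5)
Your proposal is correct and follows essentially the same route as the paper: Theorem~\ref{ci-affine} gives $X^*=T$, projectivizing identifies $X$ with the projective torus $\mathbb{T}$, and Proposition~\ref{ci-summary} then yields that $I(X)$ is a complete intersection. The extra remarks you add (the height count via Cohen--Macaulayness, or alternatively quoting the result of \cite{ci-codes}) are harmless elaborations of exactly what the paper's two-line proof does.
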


\begin{proof} By Theorem~\ref{ci-affine}, $X^*$ is an affine torus.
Then, $X$ is a projective torus. Consequently $I(X)$ is a complete
intersection by Proposition~\ref{ci-summary}. 
\end{proof}

\begin{corollary}\label{jun-19-11-2} Let $G$ be a graph. If
$\gcd(q-1,2)=1$ or if $G$ is 
bipartite, then $I(Y)$ is a complete intersection if and only if
$I(X)$ is a complete 
intersection. 
\end{corollary}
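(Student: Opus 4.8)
The plan is to assemble the structural results already in hand, observing first that one implication is essentially free. Corollary~\ref{jun19-11-1} shows, for any clutter, that if $I(Y)$ is a complete intersection then so is $I(X)$; since a graph is a clutter, this direction holds unconditionally and requires none of the present hypotheses. Thus all the real content lies in the converse, and I would devote the argument to proving that, when $G$ is bipartite or $\gcd(q-1,2)=1$, the ideal $I(X)$ being a complete intersection forces $I(Y)$ to be one as well.

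For that converse, the first step is to activate Corollary~\ref{sunday-jun19-11-night} using the stated hypothesis: under exactly the assumption ``$G$ bipartite or $\gcd(q-1,2)=1$'' one has the identity $I(Y)=I(X)+(t_1^{q-1}-t_{s+1}^{q-1})$. This identity is the bridge between the two ideals, and it is the reason the hypothesis cannot be dropped, since the complete intersection property of $I(Y)$ genuinely depends on $q$ in general. Next, assuming $I(X)$ is a complete intersection, I would invoke the cited result of \cite{ci-codes} to conclude that $X$ is a projective torus in $\mathbb{P}^{s-1}$, and then read off the explicit generators from Proposition~\ref{ci-summary}(a), namely $I(X)=(t_1^{q-1}-t_s^{q-1},\ldots,t_{s-1}^{q-1}-t_s^{q-1})$.

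The remaining step is a change of generators. Substituting the explicit form of $I(X)$ into the identity from Corollary~\ref{sunday-jun19-11-night} presents $I(Y)$ as generated by the $s-1$ binomials $t_i^{q-1}-t_s^{q-1}$ ($1\le i\le s-1$) together with $t_1^{q-1}-t_{s+1}^{q-1}$, a total of $s$ binomials. Subtracting the first and last of these produces $t_s^{q-1}-t_{s+1}^{q-1}$, and combining this with each $t_i^{q-1}-t_s^{q-1}$ yields $t_i^{q-1}-t_{s+1}^{q-1}$ for every $i$; reversing these manipulations shows the two generating sets span the same ideal. Hence $I(Y)=(t_1^{q-1}-t_{s+1}^{q-1},\ldots,t_s^{q-1}-t_{s+1}^{q-1})$, which is precisely condition $(\mathrm{a}_2)$ of Theorem~\ref{ci-affine}, so $I(Y)$ is a complete intersection. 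Alternatively one may skip Theorem~\ref{ci-affine} and conclude directly, since $I(Y)$ has height $s$ and we have exhibited $s$ homogeneous generators (Definition~\ref{jul25-11}).

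I expect no serious obstacle here: the argument is a chaining of prior results rather than a new computation. The only point requiring genuine care is the generator bookkeeping in the last paragraph, namely verifying that the $s$ displayed binomials really do generate $I(X)+(t_1^{q-1}-t_{s+1}^{q-1})$ and that their count matches the height of $I(Y)$. The conceptual crux is simply recognizing that the stated hypothesis is exactly what is needed to apply Corollary~\ref{sunday-jun19-11-night}, after which Theorem~\ref{ci-affine} immediately closes the argument.
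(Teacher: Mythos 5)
Your proposal is correct and follows the paper's own route: the paper's proof is precisely the one-line observation that the statement "follows at once from Corollary~\ref{sunday-jun19-11-night} and Corollary~\ref{jun19-11-1}," and you use exactly those two results, merely filling in the details (the \cite{ci-codes} torus characterization, Proposition~\ref{ci-summary}(a), and the generator bookkeeping) that the paper leaves implicit. Your explicit verification that the $s$ binomials generate $I(Y)$ is sound, so this is simply an expanded version of the same argument.
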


\begin{proof} It follows at once from
Corollary~\ref{sunday-jun19-11-night} and Corollary~\ref{jun19-11-1}.
\end{proof}

The converse of Corollary~\ref{jun19-11-1} is not true as the next example shows.

\begin{example}\label{jun24-11} Let $X$ be the projective algebraic
toric set parameterized 
$y_1y_2,y_2y_3,y_1y_3$ and let $K=\mathbb{F}_5$. Then,
$I(X)=(t_1^4-t_3^4, t_2^4-t_3^4)$ is a complete intersection but 
$$
I(Y)=(t_3^4-t_4^4, t_2^2t_3^2-t_1^2t_4^2, t_1^2t_3^2-t_2^2t_4^2, t_2^4-t_4^4,
      t_1^2t_2^2-t_3^2t_4^2, t_1^4-t_4^4)
$$
is not a complete intersection. The generators of $I(Y)$ were computed
using the computer algebra system {\em Macaulay\/}$2$ \cite{mac2} and the methods
of \cite{affine-codes,algcodes}. If $K=\mathbb{F}_4$, then $I(X)$ and
$I(Y)$ are both complete intersections in concordance with
Corollary~\ref{jun-19-11-2}.
\end{example}

\begin{proposition}\label{jul27-11} If $G$ is a connected graph, then $I(X)$ is a complete
intersection if and only if $G$ is a tree or $G$ is a unicyclic graph with a unique
odd cycle.
\end{proposition}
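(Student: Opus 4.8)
The plan is to reduce the statement to a cardinality count using the torus characterization of the complete intersection property. By the result of \cite{ci-codes} recalled just before Theorem~\ref{ci-affine}, $I(X)$ is a complete intersection if and only if $X$ is a projective torus $\mathbb{T}$ in $\mathbb{P}^{s-1}$. Since every coordinate of a point of $X$ is a product of nonzero elements of $K$, one always has the inclusion $X\subseteq\mathbb{T}$, and by Proposition~\ref{ci-summary}(c) the torus satisfies $|\mathbb{T}|=(q-1)^{s-1}$. As both sets are finite, $X=\mathbb{T}$ holds if and only if $|X|=(q-1)^{s-1}$. Thus the first step collapses the problem to deciding when $|X|=(q-1)^{s-1}$.

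Next I would invoke the cardinality formula from \cite{algcodes} used in the proof of Corollary~\ref{maria-cafeteria-cinvestav-1}: for a connected graph $G$ on $n$ vertices, $|X|=(q-1)^{n-2}$ if $G$ is bipartite and $|X|=(q-1)^{n-1}$ otherwise. Since $q\neq 2$, we have $q-1>1$, so the map $m\mapsto (q-1)^m$ is injective and exponents may be matched. This yields two cases. If $G$ is bipartite, then $X=\mathbb{T}$ exactly when $n-2=s-1$, i.e. $s=n-1$; for a connected graph this is precisely the condition that $G$ be a tree. If $G$ is not bipartite, then $X=\mathbb{T}$ exactly when $n-1=s-1$, i.e. $s=n$; for a connected graph $s=n$ means the cyclomatic number $s-n+1$ equals $1$, so $G$ is unicyclic, and failing to be bipartite forces its unique cycle to be odd.

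Finally I would assemble the two cases into the stated equivalence, verifying both directions. A connected bipartite graph with $s=n-1$ is a tree, and conversely every tree is connected, bipartite, and satisfies $s=n-1$; a connected non-bipartite graph with $s=n$ is unicyclic with an odd cycle, and conversely a unicyclic graph with a unique odd cycle is connected, non-bipartite, and satisfies $s=n$. In either admissible case the displayed cardinality equals $(q-1)^{s-1}$, so $X=\mathbb{T}$ and $I(X)$ is a complete intersection, while in every other case $|X|\neq(q-1)^{s-1}$ and $I(X)$ fails to be one.

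I expect no serious obstacle: once the torus characterization is in hand the argument is purely a comparison of the edge count $s$ against the vertex count $n$, filtered through the bipartite/non-bipartite dichotomy. The only point needing a little care is the elementary graph bookkeeping with the cyclomatic number $s-n+1$ — namely that for a connected graph $s=n-1$ characterizes trees and $s=n$ characterizes unicyclic graphs — together with the standard fact that a graph is non-bipartite precisely when it contains an odd cycle, so that the unique cycle of a unicyclic graph is odd exactly when the graph is not bipartite.
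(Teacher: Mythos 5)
Your proposal is correct and follows essentially the same route as the paper: both reduce the complete intersection property to the equality $X=\mathbb{T}$ via the result of \cite{ci-codes}, then compare $|X|$ (computed from \cite[Corollary~3.8]{algcodes} in the bipartite and non-bipartite cases) with $|\mathbb{T}|=(q-1)^{s-1}$, and finish with the standard facts that a connected graph is a tree iff $s=n-1$ and unicyclic iff $s=n$. Your only cosmetic difference is packaging the two implications into a single chain of equivalences (using $X\subseteq\mathbb{T}$ and injectivity of $m\mapsto(q-1)^m$ for $q\neq 2$), whereas the paper argues the two directions separately; the content is the same.
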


\begin{proof} $\Rightarrow$) As $I(X)$ is a complete intersection,
$X\subset\mathbb{P}^{s-1}$ is a projective torus \cite[Corollary~4.5]{ci-codes}. 
Thus, $|X|=(q-1)^{s-1}$. If $G$ is bipartite, 
then $|X|=(q-1)^{n-2}$ \cite[Corollary 3.8]{algcodes}.  
Hence, $s=n-1$ and $G$ is a tree because $G$ is connected. If $G$
is not bipartite, then $|X|=(q-1)^{n-1}$ 
\cite[Corollary 3.8]{algcodes}. Thus, $s=n$ and $G$ is a unicyclic graph.

$\Leftarrow$) Let $\mathbb{T}$ be a projective torus in 
$\mathbb{P}^{s-1}$. If $G$ is a tree, then $s=n-1$ and
$|X|=(q-1)^{n-2}$ \cite[Corollary~3.8]{algcodes}. Since $X\subset\mathbb{T}$ and
$|\mathbb{T}|=(q-1)^{s-1}$, we get that $|X|=|\mathbb{T}|$. Thus,
$X=\mathbb{T}$. Consequently, $I(X)$ is a complete intersection by
Proposition~\ref{ci-summary}. If $G$ is a unicyclic graph with a unique
odd cycle, then $s=n$ and $|X|=(q-1)^{n-1}$
\cite[Corollary~3.8]{algcodes}. 
Since $X\subset\mathbb{T}$ and
$|\mathbb{T}|=(q-1)^{s-1}$, we get that $|X|=|\mathbb{T}|$. Thus,
$X=\mathbb{T}$. Hence, $I(X)$ is a complete intersection by
Proposition~\ref{ci-summary}. 
\end{proof}

From this result it follows that for connected graphs, with $q\geq 3$, 
the complete intersection property of $I(X)$
is independent of the finite field $K$. The complete intersection
property of $I(Y)$ depends on 
the finite field $K$ as seen in Example \ref{jun24-11}. The following 
result describes when $I(Y)$ is a complete intersection for connected graphs.

\begin{theorem}\label{main-iy-graphs} Let $G$ be a connected graph. Then 
$I(Y)$ is a complete intersection if and only if $G$ is a
tree or $G$ is a unicyclic graph with a unique odd cycle and $q$ is even.
\end{theorem}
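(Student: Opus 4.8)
The plan is to reduce both implications to the already-established characterization of when $I(X)$ is a complete intersection (Proposition~\ref{jul27-11}), together with the dictionary relating the complete intersection properties of $I(X)$ and $I(Y)$ (Corollaries~\ref{jun19-11-1} and \ref{jun-19-11-2}). The only genuinely new input is a degree count to eliminate odd $q$ in the non-bipartite situation.

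For the direction ($\Leftarrow$), I would split into two cases. If $G$ is a tree, then $G$ is bipartite and Proposition~\ref{jul27-11} gives that $I(X)$ is a complete intersection; since $G$ is bipartite, Corollary~\ref{jun-19-11-2} then yields that $I(Y)$ is a complete intersection as well. If instead $G$ is unicyclic with a unique odd cycle and $q$ is even, then $\gcd(q-1,2)=1$, and again $I(X)$ is a complete intersection by Proposition~\ref{jul27-11}; invoking Corollary~\ref{jun-19-11-2} under the hypothesis $\gcd(q-1,2)=1$ transfers this to $I(Y)$. Thus both admissible families produce a complete intersection $I(Y)$.

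For the direction ($\Rightarrow$), suppose $I(Y)$ is a complete intersection. By Corollary~\ref{jun19-11-1}, $I(X)$ is a complete intersection, so Proposition~\ref{jul27-11} forces $G$ to be either a tree or a unicyclic graph with a unique odd cycle. In the first case there is nothing left to prove. In the second case it remains to show that $q$ must be even, and I would argue by contradiction. Assume $q$ is odd. Since $G$ is connected, unicyclic and non-bipartite, we have $s=n$ and $|X|=(q-1)^{n-1}=(q-1)^{s-1}$ (using \cite[Corollary~3.8]{algcodes}); because $G$ is non-bipartite with $\gcd(q-1,2)\neq 1$, Theorem~\ref{maria-cafeteria-cinvestav}(iii) gives $|Y|=\frac{q-1}{2}|X|=\frac{(q-1)^s}{2}$. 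On the other hand, since $I(Y)$ is a complete intersection, Theorem~\ref{ci-affine} identifies $I(Y)$ with the vanishing ideal of a projective torus $\mathbb{T}'$ in $\mathbb{P}^s$, so that $|Y|=\deg(S[u]/I(Y))=(q-1)^s$ by Proposition~\ref{ci-summary}(c). Comparing the two expressions for $|Y|$ yields $(q-1)^s=\frac{(q-1)^s}{2}$, an impossibility. Hence $q$ is even, completing this direction.

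The assembly is essentially routine once the right earlier results are lined up; the step I expect to require the most care is the forward direction in the non-bipartite case, where the contradiction hinges on recognizing that a complete intersection $I(Y)$ forces $Y$ to be a \emph{full} projective torus (degree $(q-1)^s$), which is incompatible with the ``half torus'' count $|Y|=\frac{q-1}{2}|X|$ coming from Theorem~\ref{maria-cafeteria-cinvestav}(iii) when $q$ is odd.
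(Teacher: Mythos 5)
Your proposal is correct and follows essentially the same route as the paper: both directions reduce to Proposition~\ref{jul27-11} via Corollaries~\ref{jun19-11-1} and \ref{jun-19-11-2}, and odd $q$ is excluded by comparing the ``half torus'' count $|Y|=\frac{(q-1)^n}{2}$ with the full-torus count $|Y|=(q-1)^s$ forced by Theorem~\ref{ci-affine}. The only cosmetic difference is that the paper obtains $|Y|=(q-1)^s$ from $X^*=T$ (condition $(\mathrm{a}_3)$, using $|Y|=|X^*|$), whereas you obtain it from condition $(\mathrm{a}_2)$ together with the degree formula of Proposition~\ref{ci-summary}(c); these are interchangeable.
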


\begin{proof} $\Rightarrow$) By Corollary~\ref{jun19-11-1}, $I(X)$ is
a complete intersection. Then, by Proposition~\ref{jul27-11}, 
$G$ is a tree or $G$ is a unicyclic graph with a unique
odd cycle. If $G$ is a tree, there is nothing to prove. Assume that $G$
is not a tree. Then, $s=n$. Notice that in general $|X^*|=|Y|$. 
If $q$ is odd, then by Corollary~\ref{maria-cafeteria-cinvestav-1} and
Theorem~\ref{ci-affine}, we get:
$$
|Y|=(q-1)^n/2\ \mbox{ and }\ |Y|=|X^*|=(q-1)^s=(q-1)^n,
$$
a contradiction. Thus, $q$ is even, as required. 

$\Leftarrow$) It follows readily from Proposition~\ref{jul27-11} and
Corollary~\ref{jun-19-11-2}. 
\end{proof}

\begin{corollary} Let $G$ be a connected bipartite graph. Then 
$I(Y)$ is a complete intersection if and only if $G$ is a tree. 
\end{corollary}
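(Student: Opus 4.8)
The plan is to deduce this corollary directly from Theorem~\ref{main-iy-graphs}, which already characterizes the complete intersection property of $I(Y)$ for an arbitrary connected graph $G$: namely, $I(Y)$ is a complete intersection if and only if $G$ is a tree, or $G$ is a unicyclic graph with a unique odd cycle and $q$ is even. Since here $G$ is additionally assumed to be bipartite, I would invoke the standard characterization of bipartite graphs as precisely the graphs containing no odd cycles (this is the defining consequence of having a bipartition $(V_1,V_2)$ with every edge meeting each $V_i$ in exactly one vertex). Thus the second alternative in Theorem~\ref{main-iy-graphs}, which requires $G$ to contain an odd cycle, simply cannot occur under the bipartiteness hypothesis.

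Concretely, I would argue as follows. For the forward direction, suppose $I(Y)$ is a complete intersection. By Theorem~\ref{main-iy-graphs}, $G$ is either a tree or a unicyclic graph with a unique odd cycle (with $q$ even). The latter is excluded because a bipartite graph has no cycles of odd length, so $G$ must be a tree. For the converse, if $G$ is a tree, then Theorem~\ref{main-iy-graphs} (first alternative) gives immediately that $I(Y)$ is a complete intersection, with no parity restriction on $q$. This disposes of both implications.

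I do not anticipate any genuine obstacle here, since the corollary is purely a specialization of the preceding theorem: the only substantive point is recognizing that the ``unicyclic with a unique odd cycle'' branch is vacuous for bipartite $G$. If one wished to make the proof self-contained rather than appealing to Theorem~\ref{main-iy-graphs}, the hard part would be reproving the forward direction, which relies on Corollary~\ref{jun19-11-1} (that $I(Y)$ a complete intersection forces $I(X)$ to be one), Proposition~\ref{jul27-11} (the graph-theoretic classification of when $I(X)$ is a complete intersection), and the degree computation of Corollary~\ref{maria-cafeteria-cinvestav-1} together with Theorem~\ref{ci-affine} to rule out the odd $q$ case; but for a bipartite graph these all collapse to the single tree case, so the short deduction from Theorem~\ref{main-iy-graphs} is the cleanest route.
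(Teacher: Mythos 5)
Your proposal is correct and coincides with the paper's (implicit) argument: the corollary is stated immediately after Theorem~\ref{main-iy-graphs} precisely because it follows by specializing that theorem, with the ``unicyclic with a unique odd cycle'' alternative ruled out since a bipartite graph contains no odd cycle. Both directions are handled exactly as the paper intends, so nothing is missing.
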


\bibliographystyle{plain}

\end{document}